\documentclass[a4paper,11pt,reqno]{amsart}
\usepackage{amssymb}
\usepackage{amsmath}
\usepackage{amscd}
\usepackage{stmaryrd}
\usepackage{amsbsy}
\usepackage{multirow}
\usepackage{tikz}
\usepackage{tikz-cd}
\usepackage{enumitem}
\usepackage{adjustbox}

\usepackage{color}

\addtolength{\oddsidemargin}{-1.5cm}
\addtolength{\evensidemargin}{-1.5cm}
\addtolength{\headheight}{0cm}
\addtolength{\headsep}{0cm}
\addtolength{\textheight}{0.8cm}
\addtolength{\textwidth}{3cm}
\addtolength{\footskip}{1cm}
\parskip1ex 
 
\def\a{\alpha}
\def\b{\beta}
\def\c{\gamma}
\def\f{\varphi}

\def\lg{\langle}
\def\rg{\rangle}
\def\ra{\rightarrow}
\def\lra{\longrightarrow}
\def\.{\cdot}

\def\nb{\nabla}
\def\l{\lambda}
\def\tl{\tilde}
\def\beq{\begin{equation}}
\def\eeq{\end{equation}}
\def\bi{\begin{enumerate}}
\def\ei{\end{enumerate}}
\def\bea{\begin{eqnarray*}}
\def\eea{\end{eqnarray*}}
\def\ba{\begin{array}}
\def\ea{\end{array}}
\def\bpm{\begin{pmatrix}}
\def\epm{\end{pmatrix}}
\def\A{\mathcal{A}}

\def\L{\Lambda}
\def\k{\kappa}

\def\r{\end{proof}}

\def\hyp{{H}}
\def\ell{{E}}
\def\para{{P}}
\def\Hyp{\mathcal{H}}
\def\Ell{\mathcal{E}}
\def\Para{\mathcal{P}}
\def\Aut{\mathrm{Aut}}
\def\pj{\mathbf{p}}

\def\ot{\otimes}

\def\ci{\mathcal{C}^\infty}

\def\m{\mathfrak{m}}

\def\dg{\dot\gamma}

\def\m{\mathfrak{m}}

\def\M{\mathcal{M}}

\def \R{\mathbb{R}}

\def \N{\mathbb{N}}
\def \Z{\mathbb{Z}}

\def \T{\mathrm{T}}
\def \H{\mathbb{H}}

\def\9{[\! (}
\def\0{)\! ]}
\def\[{\pmb{[}}
\def\]{\pmb{]}}


\def\lra{\longrightarrow}

\def\id{\mathrm{id}}
\def\be{\begin{equation}}

\def\ee{\end{equation}}

\def\LL{{\mathcal L\,}}

\def\tr{\mathrm{tr}}

\def\GL{\mathrm{GL}}
\def\PGL{\mathrm{PGL}}
\def\PSL{\mathrm{PSL}}
\def\SL{\mathrm{SL}}
\def\rp{\mathbb{R}\mathbb{P}}

\def\SO{\mathrm{SO}}

\def\Sym{\mathrm{Sym}}
\def\Scal{\mathrm{Scal}}

\def\tp{\widetilde{\mathbb{R}\mathbb{P}^1}}
\def\tsl{\widetilde{\SL}(2,\mathbb{R})}
\def\sm{\smallsetminus}

\def\Hss{\mathrm{Hess}}


\newtheorem{ede}{Definition}[section]

\newtheorem{cor}[ede]{Corollary}
\newtheorem{prop}[ede]{Proposition}
\newtheorem{lem}[ede]{Lemma}
\newtheorem{thrm}[ede]{Theorem}


\newtheorem{ath}[ede]{Theorem}

\newtheorem{elem}[ede]{Lemma}

\theoremstyle{definition}
\newtheorem{remark}[ede]{Remark}
\def\obs{\begin{definition}}
\def\eobs{\end{definition}}
 
                             
\title{Projective structures on curves and conformal geometry}
\author{Florin Belgun, Andrei Moroianu}

\address{Florin Belgun, Institute of Mathematics ``Simion Stoilow'' of the Romanian Academy, 21 Calea Grivitei, 010702 Bucharest, Romania}
 \email{fbelgun@imar.ro}
 
\address{Andrei Moroianu, Université Paris-Saclay, CNRS,  Laboratoire de mathématiques d'Orsay, 91405 Orsay, France, and Institute of Mathematics ``Simion Stoilow'' of the Romanian Academy, 21 Calea Grivitei, 010702 Bucharest, Romania}
\email{andrei.moroianu@math.cnrs.fr}

\thanks{This work was partly supported by the PNRR-III-C9-2023-I8 grant CF 149/31.07.2023 {\em Conformal Aspects of Geometry and Dynamics}.}

\begin{document}

\begin{abstract} Projective structures on curves appear naturally in many areas of mathematics, from extrinsic conformal geometry to analysis, where the main problem is to find qualitative information about the solutions of Hill equations. In this paper, we describe in detail the correspondence between different equivalent definitions of projective structures and their isomorphism classes, correcting long-standing inexactitudes in the literature. As an application, we show that the {\em Yamabe problem for curves} in a conformal/Möbius ambient space has no solutions in general. 
  
 \bigskip

\noindent
2010 {\it Mathematics Subject Classification}: Primary 53A20, 53A30, 53A55.

\medskip
\noindent{\it Keywords:} Conformal structure, projective
structure, Laplace structure, M\"obius structure. 
\end{abstract}

\maketitle

\section{Introduction}

There are not many structures one can define on a 1-dimensional manifold, and all we can think of are {\em flat}, i.e. they do not admit local invariants.

For example, a conformal structure on a curve is simply its smooth structure, and all curves are orientable, even parallelizable. A Riemannian structure on a connected curve is characterized globally by its total length, and, for the ones of infinite length, whether they are complete or not.

There is, however, a quite interesting type of structures on a curve, and the moduli space of such structures -- in case of closed curves -- turns out to be really intricate. We are talking here about {\em projective structures}.

Historically, projective structures have been studied for a long time as particular cases of a $(G,X)$-geometry (a viewpoint first introduced by Felix Klein in his Erlangen Program, then formalized by Thurston). For the $1$-dimensional case, Kuiper gave a classification in 1954 \cite{ku}, but there are some flaws as we will see in the present paper.

Projective structures on curves are tightly linked with the vast domain of analysis focusing on {\em Hill's equation}
$$x''+Fx=0,\ F:\R\ra\R,\ F(t+T_0)=F(t), \ \forall t\in\R.$$
Indeed, two independent solutions $(x_1,x_2)$ of the above equation define a map
$$\R\ni t\mapsto [x_1(t):x_2(t)]\in\rp^1$$
that provides local charts forming a projective atlas as a $(\PSL(2,\R),\rp^1)$-geometry, \cite{mw}.

The latter analytic approach can be stated in an invariant way as a {\em Laplace structure} on a curve $C$, which is a second-order linear differential operator, with no first-order term (there is indeed an invariant way to state this, see Proposition \ref{difhes}) on a certain {\em weight bundle} on $C$ (cf. Definition \ref{lapla}).

Our interest is motivated by the existence of various structures induced on a curve $C$ embedded in a conformal manifold $M$ by the conformal structure of the ambient space (more precisely, the ambient space $M$ needs to be {\em conformal} if $\dim M\ge 3$, and furthermore {\em Möbius} if $\dim M=2$, see Section 2 and \cite{mlc}). Every such curve carries special parametrizations, for which the {\em conformal acceleration} vanishes, and a canonically induced {\em Laplace structure}.
We show in Proposition \ref{specproj} that these notions are strongly related.

The induced Laplace structure on a curve in a conformal/Möbius ambient space is, in fact, a particular case of the {\em conformal Laplacian}, or {\em Yamabe operator}, that is, a second order linear differential operator on a weight bundle, with symbol equal to the conformal contraction and  with vanishing first order term, determined by the conformal structure alone. For all dimensions $n\ge 3$, the {\em Yamabe problem} consists in searching for a metric with constant scalar curvature in the conformal class (which turns out to be, up to a constant factor, the zero order term of the above Laplace structure, viewed in an appropriate gauge). 

In the 1-dimensional case, Calderbank and Burstall formulated in \cite{cal-bur} the {\em Yamabe problem for curves} in a conformal/Möbius ambient space: is there a metric in the conformal class for which the induced Laplace structure has constant zero order term?

In order to give a (negative) answer to this question, we classify the projective structures on connected curves up to isomorphism. This endeavor is not new, we could find many references dealing with this topic, starting with the classification of these structures by Kuiper \cite{ku}, the classification (up to diffeomorphism) of vector fields preserving a projective structure on a circle \cite{hit}, and many other authors interested in the qualitative behavior of solutions of Hill's equation \cite{guh}. 

Nevertheless, we consider that our differential-geometric approach brings more clarity and situates the study of projective in a broader context of differential geometry, like Lie group actions and differential operators associated to a given $G$-structure (in the sense of Kobayashi \cite{kob}). 

We prove several versions of the equivalence between a Laplace structure on a curve and a projective structure, which in turn can be characterized by a {\em developing map} and its {\em holonomy}, \cite{gold}, see Theorem \ref{main}. 

The developing map is the main tool to characterize a projective structure on an open curve (diffeomorphic to $\R$), cf. Theorem \ref{open}, while the holonomy turns out to determine the isomorphism class of a projective structure on a closed curve (diffeomorphic to $S^1$), cf. Theorem \ref{clas-g}.

For open curves, the developing map $D:C\ra\tp$ defines an embedding of the curve in the universal cover of $\rp^1$, and we define the {\em winding number} of the curve as the "average" number of points in the preimage in $C$ of the points of $\rp^1$ (if this "average"is not integer, we "round it up" to a number in $\tfrac12+\N$). In  Proposition \ref{open} and in Corollary \ref{openn} we retrieve the classification of open projective curves, first obtained in \cite{ku}, see also \cite{gold}.

Using some technical results about $\tsl$, we approach in Section 5 the classification of projective structures on closed curves, in which the main role is played by the conjugacy class of a generator of the {\em holonomy group}, which acts freely on the image of the developing map. In fact, there exists a canonical generator (for a given orientation on the curve) of the holonomy group, and its conjugacy class determines the projective structure on the curve up to isomorphism, Proposition \ref{dev-sl+} and Corollary \ref{moduli}.\

We complete the theory by Theorem \ref{clasif}, which produces a natural, continuous and bijective map from the set of Laplace structures on $S^1$, up to diffeomorphism, and the set of conjugacy classes of {\em positive generators} of the holonomy groups of projective structures on $S^1$. We use here the classification of conjugacy classes of $\tsl$, Theorem \ref{conjtsl}, which seems to be a classical result but was only implicitly (and inexactly) used in \cite{ku}, however it is only very recently that a complete study of the conjugacy classes of elements of $\tsl$ has been published \cite{taf}.

Finally, we describe in Proposition \ref{afin}, and Proposition \ref{windnumbers} some geometric properties that characterize each projective class, and in Proposition \ref{autom} we determine the automorphism group of each projective class. For the impatient reader, the table at the end of Section 5 provides most of the relevant information.

We conclude the paper by the proof of the fact that the zero order term of a Laplace structure can not always be constant, giving therefore a negative answer to the Yamabe problem for curves in a conformal/Möbius ambient space, cf. Proposition \ref{yam}.

  \section{Curves in conformal geometry} 
  \begin{ede} On a $n$-dimensional manifold $M$, denote by $L$ the (oriented) line bundle associated to the representation
    $$\GL(n,\R)\ni A\mapsto |\det A|^{1/n}\in\R^*_+$$
  and we refer to it as the {\em weight bundle of weight 1}.\end{ede}
Analogously, using other powers of the absolute value of the determinant, one can consider any real power $L^k$ of the weight bundle $L$. We can easily notice that $L^0$ is canonically isomorphic to the trivial line bundle $M\times\R$, $L^{-1}$ is isomorphic with the dual of $L$, and $L^k$, with $k\in N$, is isomorphic to the $k$-th tensorial power $L^{\otimes k}$ of $L$.
Clearly, $L^k$ are all oriented (we know what a {\em positive} section of these bundles is), but there is no canonical trivialization. However, a trivialization of $L^k$, for $k\ne 0$, induces a trivialization of all nonzero powers of $L$, including $L^{-n}\simeq |\Lambda M|$, the bundle of {\em densities} on $M$.

More generally, a tensor bundle on $M$ (i.e., a bundle associated to the the frame bundle of $M$ via a representation $\rho:\GL(n,\R)\ra \GL(m,\R)$)  is said to have {\em intrinsic  weight} $k\in \R$ if and only if $\rho(a\mbox{Id}_n)=a^k\mbox{Id}_m$ for every $\a\in \R^*_+$. The bundles $L^k$ make no exception to this rule, and $k$ is indeed the {\em intrinsic weight} of $L^k$ for every $k\in \R$.
    
\begin{ede} A {\em conformal structure} on a manifold $M$ is a symmetric, non-degenerate bilinear form $c$ on $\T M$ with values in $L^2$, which is normalized, in the sense that the induced bilinear form on $|\Lambda^nM|\simeq L^{-n}$ with values in $L^{-2n}$ is the identity of the square of the latter bundle.
\end{ede}
In this paper we will only discuss {\em Riemannian} conformal structures, i.e. such that $c$ is {\em positive definite}, which is a notion that makes sense because the target space of $c$ is $L^2$, a canonically oriented bundle. 

Notice that all bundle homomorphisms induced by $c$ (including $c$ itself, as a bundle map from $TM\otimes TM$ to $L^2$) are between spaces with the same intrinsic weight. In particular, the so-called {\em rising/lowering of indices} from $\T^*M$ to $\T M$ must be adjusted to satisfy this ``weight conservation'' principle. Thus, $c$ defines an isomorphism between $\T^*M$ and $\T M\otimes L^{-2}$  

The weight bundles $L^k$, for $k\in\R$, are also important in the case of a $1$-dimensional manifold, a curve. Here, of course, a conformal structure $c$ is nothing else but the very smooth structure of the curve.

For further notions and results about conformal geometry, we refer to \cite{mlc}. Let us briefly summarize some of them:
\begin{ede} A {\em Weyl structure} is a torsion-free conformal connection on $(M,c)$. It induces connections on all bundles associated to the frame bundle of $M$.\end{ede}
Two Weyl structures $\nb$ and $\nb'$ are related by 
\be\label{Weylrel} \nb'_XY-\nb_XY=\tilde\theta_XY:=\theta(X)Y+\theta(Y)X-c(X,Y)\theta,
\ \mbox{for}\ X,Y\in\T M,\ee
where $\theta$ is a 1-form. Note that $c(X,Y)\theta\in L^2\otimes \T^*M\simeq \T M$.

A remark (for example, from \cite{mlc}) is that for every curve in a conformal manifold, the latter carries {\em adapted} Weyl structures (associated to parametrizations $\gamma:I\ra C\subset M$ of the curve), for which the parametrized curve is {\em geodesic}, i.e. $\nabla_{\dot\gamma}\dot\gamma=0$. In order to single out a particular class of parametrizations or even a class of immersed curves, one needs to introduce further objects induced by $c$.

The {\em Schouten-Weyl tensor} $h^\nabla$ of a Weyl structure $\nabla$ is a bilinear form on $\T M$ and it is computed, for $n\ge 3$, from the curvature of $\nabla$ \cite{ga}, \cite{pg}, see also \cite[Definition 2.9]{mlc}. The Schouten-Weyl tensor satisfies the following transformation rule when the Weyl structures are changed via \eqref{Weylrel}, as shown in \cite{ga}, \cite{pg}:
\be\label{shout} h^{\nb'}=h^\nb -\nb\theta+\theta\otimes\theta-\frac12c(\theta,\theta)c.\ee
The formula from \cite[Definition 2.9]{mlc} cannot be used for $n=2$, but one can define a Schouten-Weyl-like tensor on a {\em Möbius} surface \cite{cald}, see also \cite[Definition 3.2]{mlc} (which is a structure that "rigidifies" the conformal structure of a 2-dimensional conformal manifold), and this tensor satisfies the gauge transformation law \eqref{shout} as well. We use here the notion of a Möbius structure merely to be able to have a 2-dimensional ambient space and still get the same results as for higher-dimensional conformal spaces. 

We make the following convention of naming a {\em conformal/Möbius} manifold $(M,c)$: if $\dim M\ge 3$ this will simply mean $(M,c)$ as a conformal manifold, if $n=2$ we will assume $M$ is endowed with a Möbius structure that is compatible with $c$. The consequence in both cases is that, for every Weyl structure $\nb$, there is an associated Schouten-Weyl tensor $h^\nb$, which changes with the Weyl structure according to \eqref{shout}.

We are interested in the structure induced by a conformal/Möbius ambient space on a curve.

Let us first recall the definition of the {\em conformal acceleration} of a parametrized curve in conformal/Möbius ambient space \cite[Definition 4.1 and Lemma 4.8]{mlc}:
\begin{ede} \cite{mlc} The {\em conformal acceleration} $a_c(\c)$ of a curve $\c:I\ra\M$ is equal to the vector field $c(\dot\c,\dot\c)h^\nb(\dot\c)$ where $\nb$ is any Weyl structure adapted to $\c$. \end{ede}
As before, the factor $c(\dot\c,\dot\c)\in L^2$ ensures that the conformal acceleration is a vector field on $C:=\c(I)$. This vector field has a {\em normal} and a {\em tangential} component 
$$a_c(\c)=a_c^N(\c)+a_c^T(\c),$$
where $a_c^N\perp\dot\c$ and $a_c^T(c)\in \T C$ are both independent of the choice of the Weyl structure $\nb$, and $a_c^N(\c)$ is furthermore independent also of the parametrization of $C$ \cite[Lemma 4.2]{mlc}. 

We define the following class of parametrizations of any embedded curve $C\subset (M,c)$ \cite{baea}:
\begin{ede}\label{speccf} A (local) parametrization of a curve $C$  in a conformal/Möbius manifold is {\em special} if and only if the tangential part of its conformal acceleration vanishes identically.
\end{ede}
Such parametrizations always exist, as solutions of an ODE. An equivalent description of special parametrizations will be given in Subsection 3.2, Proposition \ref{specproj}.

\section{Laplace structures on curves, projective parametrizations and
   Hill's equation}\label{hill} 

 We review here two main descriptions of projective structures on  curves and describe the equivalence of these approaches. More precisely, a projective structure on a curve is defined by:
 \begin{ede}\label{defproj} A {\em projective structure} on a given curve $C$
  is an
  atlas $\mathcal{A}$ of charts from open subsets of $C$ with values
  in $\R$, such that the transition maps consist of restrictions of maps of the form
$$f_A(x):=\frac{ax+b}{cx+d}, \mbox{ for }
    A:=\begin{pmatrix} a&b\\c&d\end{pmatrix}\in \GL(2,\R),$$
called {\em homographies}. A {\em projective curve} is a curve $C$ 
together with such an atlas $\mathcal{A}$, and a {\em projective map}
$f:C_1\ra C_2$ between projective curves is a map that is induced, in
corresponding projective charts of $\mathcal{A}_1$, respectively
$\mathcal{A}_2$, by homographies.
\end{ede}
A projective map is thus a local diffeomorphism. Conversely, if $f$ is
an immersion $f:C_1\ra C_2$ of curves, and $C_2$ has a projective
structure, then $f$ induces a projective structure on $C_1$.

A classical result states that the homographies $f_A$, as described in Definition \ref{defproj}, satisfy the equation $S(f_A)=0$, where $S$ is the {\em Schwarzian derivative}:
\be\label{schwarz}S(\f):=\frac{\f'''}{\f'}-\frac32\left(\frac{\f''}{\f'}\right)^2,\ee
defined on local diffeomorphisms $\f:I_1\ra I_2$ between intervals in $\R$.

Conversely, for every local diffeomorphism $\varphi$ satisfying $S(\varphi)=0$ there exists a matrix $A$ such that $\varphi$ coincides on an open set with $f_A$.

In the next section we will investigate in full detail the projective structures on curves from the viewpoint of $(G,X)$-structures (in our case, $(\PGL(2,\R),\rp^1)$-structures), but in the present section we focus on a class of differential-geometric structures on a curve, the {\em Laplace structures}, that we will further show to be equivalent to the projective structures from Definition \ref{defproj}.

\subsection{Laplace structures on curves}

Let $C$ be a smooth curve. A connection $\nb$
on $\T C$ induces a connection on the canonical weight bundle
$L:=|\T C|$ and on every real power of $L$. 

\begin{remark}\label{41} $L=|\T C|$ is the oriented line bundle associated to $\T C$;
  every orientation of $C$ induces a canonical isomorphism of $L$ with
  $\T C$. The bundle $L$ being oriented (hence topologically trivial), all real
  powers of $L$ are well defined and trivial. In the sequel, if $X$ is a non-vanishing vector field along $C$, e.g., the speed vector field $\dg$ induced by some parametrization $\c$, then
  $|X|^k$ will denote the corresponding section of $L^k$.
\end{remark}

Every parametrization $\gamma$ of $C$ induces a unique
connection $\nabla^\gamma$ defined by requiring $\dg$ to be $\nabla^\gamma$-parallel. Conversely, every connection admits
compatible {\em local} (but not necessarily global) parametrizations. For example,
the tautological connection on the half-line $(0,+\infty)$ induces a connection on the closed curve $C:=(0,+\infty)/\sim$, where $t\sim
2t, \ \forall t>0$, which carries no global parallel sections.

If $\nabla$ is any connection on $C$, the corresponding connection on $L^k$ defines a {\em Hessian} $\Hss^\nb:\ci(L^k)\to \ci(\T ^*C\otimes \T ^*C\otimes L^k)\simeq \ci(L^{k-2})$, acting on a section $l$ on $L^k$ by
$$\Hss^\nb l(X,Y):=\nb_X\nb_Yl-\nb_{\nb_XY}l.$$
When comparing the Hessians induced by two
connections on $L^k$, \cite[Proposition 3.7]{mlc} (see also \cite{hit}) shows:

\begin{elem}\label{difhes} The difference between the Hessians on $L^k$ of two
  arbitrary connections $\nb,\nb'$ on a curve $C$ is a zero-order 
operator if and only if $k=1/2$.
\end{elem}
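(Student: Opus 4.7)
The proof reduces to a short local computation. In dimension one, any two connections on $\T C$ differ by a $1$-form:
\[
\nb'_X Y-\nb_X Y=\a(X)\,Y,\qquad X,Y\in\T C,
\]
for some $\a\in\Omega^1(C)$, since $\End(\T C)$ is canonically trivial. The Leibniz rule on tensor powers of $L=|\T C|$ then propagates this to the induced connections on $L^k$, yielding
\[
\nb'_X l-\nb_X l=k\,\a(X)\,l,\qquad l\in\ci(L^k),
\]
with the factor $k$ reflecting the intrinsic weight of the bundle.

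The core step is to expand $\Hss^{\nb'}l-\Hss^{\nb}l$ and collect terms according to the number of derivatives of $l$ they carry. I would work locally with a coordinate vector field $\partial$, writing $a:=\a(\partial)$, $\nb_\partial\partial=\beta\partial$, $\nb_\partial e=\Gamma\,e$ for a local frame $e$ of $L^k$, and $l=fe$. A direct substitution gives
\[
\Hss^{\nb}l(\partial,\partial)=\bigl[\partial^2 f+(2\Gamma-\beta)\,\partial f+f(\partial\Gamma+\Gamma^2-\beta\Gamma)\bigr]e,
\]
and the analogous expression for $\nb'$ is obtained by replacing $(\beta,\Gamma)$ with $(\beta+a,\,\Gamma+ka)$. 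The second-order terms cancel automatically, while the coefficient of $\partial f$ in the difference comes out to $(2k-1)\,a$; all the remaining contributions depend on $f$ only through pointwise multiplication.

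The conclusion is then immediate: the difference $\Hss^{\nb'}-\Hss^{\nb}$ is a zero-order operator on $\ci(L^k)$ exactly when the coefficient $(2k-1)\,a$ vanishes identically in $a$. Since $a=\a(\partial)$ can be prescribed arbitrarily by a suitable choice of $\nb'$, this forces $2k-1=0$, that is $k=1/2$; conversely, for $k=1/2$ the first-order part automatically disappears and only the zero-order terms survive. The only substantial work is the algebraic bookkeeping in the expansion, and no conceptual obstacle stands in the way.
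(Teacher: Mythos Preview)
Your argument is correct and follows essentially the same route as the paper: both reduce to the observation that the induced connections on $L^k$ differ by $k\alpha$, and a direct expansion of the Hessian then isolates a first-order term with coefficient $(2k-1)$ times the difference $1$-form. The only cosmetic distinction is that you fix a local coordinate frame and compute each Hessian separately before subtracting, whereas the paper expands $\Hss^{\nb'}l-\Hss^{\nb}l$ directly (choosing $X,Y$ with $\nb_XY=0$ at the point) and thereby avoids introducing the auxiliary quantities $\beta,\Gamma$.
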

\begin{proof} The difference $\nb'-\nb$ is a 1--form $\theta\in\ci(\T ^*C)$. For every  section $l$ of $L^k$ one thus has $\nb'_Xl-\nb_Xl=k\theta(X)l$. As the
  Hessians $\Hss^{\nb'} l$ and $\Hss^\nb l$ are bilinear forms on $\T C$ with values in $L^k$,
  we can choose the arguments $X,Y\in \T C$ to be non-zero, and such that $\nb_XY=0$
  at the point where we make the computation. This implies
\bea(\Hss^{\nb'}l)(X,Y)&=&\nb'_X\nb'_Yl-\nb'_{\nb'_XY}l=
\nb'_X(\nb_Yl+k\theta(Y)l)-\nb'_{(\theta(X)Y)}l\\
&=&\nb_X\nb_Yl+k(\nb_X\theta)(Y)l + k\theta(Y)\nb_Xl + 
k\theta(X)\nb_Yl \\
&& +k^2\theta(X)\theta(Y)l-\theta(X)\nb_Yl-k\theta(X)\theta(Y)l\\ 
&=&(\Hss^\nb l)(X,Y)+(2k-1)\theta(X)\nb_Yl\\
&&+k\left(\nb_X\theta(Y)+(k-1)\theta(X)\theta(Y)\right)l,\eea
where in the last equality we use the fact that $\theta(X)\nb_Yl=\theta(Y)\nb_Xl$ since $X$ and $Y$ are collinear. This concludes the proof.
\end{proof}
The reason for which we included here the proof instead of merely citing \cite{mlc}, is to obtain the explicit relation between Hessians on $L^{1/2}$:
\be\label{comph1}\Hss^{\nb'}-\Hss^\nb =\tfrac12\left(\nabla\theta-\tfrac12\theta\otimes\theta\right).\ee
If $\nabla^{\gamma_1}$ and $\nabla^{\gamma_2}$ are the connections induced by two parametrizations $\gamma_1$, $\gamma_2$ of $C$, then the above formula reads
\be\label{comph}\Hss^{\gamma_2}-\Hss^{\gamma_1}=\tfrac12\left(\nabla^{\gamma_1}\theta-\tfrac12\theta\otimes\theta\right),
\ee
where $\theta:=\nabla^{\gamma_2}-\nabla^{\gamma_1}\in\ci(\T ^*C)$.

\begin{ede}\label{lapla} (See also \cite[Definition 3.8]{mlc}) A {\em Laplace structure} $\mathcal{L}$ on a smooth curve $C$ is a second order
  linear differential operator on $L^{1/2}$ with values in $\T ^*C\otimes \T ^*C\otimes L^{1/2}\simeq L^{-3/2}$, which differs from some
  (hence any) Hessian $\Hss^\nabla$ by a zero-order term.
\end{ede}

Note that some authors, (e.g. Hitchin \cite{hit} or Guha \cite{guh}) use the term {\em projective connection} to denote what we call here a Laplace structure. Their main reason is that, although the operator has second order, it shares some features with linear connections: its symbol, as a bundle map from $\Sym^2(\T ^*C)\otimes L^{1/2}\simeq L^{-3/2}$ 
to $L^{-3/2}$, is the identity, and (precisely because it contains no first-order derivatives), it also satisfies a relation similar to the Leibnitz rule. However, we prefer the terminology {\em Laplace structure} since it refers to a second-order operator (which moreover can also be defined on higher-dimensional manifolds \cite{mlc}).

Recall (see Remark \ref{41}) that any (local) parametrization $\c:I\ra C$ of $C$ defines privileged local sections $|\dg|^k\circ\c^{-1}\in\ci(L^k)$ for every $k$. A Laplace structure $\LL$ on $C$ thus induces, for each parametrization $\c$, a second order operator 
$\mathcal{L}^\gamma$ acting on functions $x:I\to\R$ as follows:
\be\label{lapl1}|\dg|^{-3/2}\mathcal{L}^\c(x)\circ\c^{-1}:=\mathcal{L}((x\circ\c^{-1})|\dg|^{1/2})\ee
(this is an equality between sections of $\LL^{-3/2}$ defined along the open set $\gamma(I)\subset C$).
Lemma \ref{difhes} shows that the operator
 $\mathcal{L}^\c$ differs from the
second derivative on functions by a zero-order term (i.e., it does not involve any first-order derivative):
\be\label{eqh}\mathcal{L}^\c x=x''+F^\c x.\ee

\begin{prop}\label{la-hi} A Laplace structure $\mathcal{L}$ on a curve $C$ determines a map
$$\mathcal{F}:\{\mbox{local parametrizations $\c:I\ra C$}\}
\lra\{\mbox{functions on   $I$}\},\qquad \gamma\mapsto F^\gamma.$$
such that the dependence of $F^{\textstyle{\c}}$ on $\c$ satisfies the following rule with respect to gauge transformations:
If $\c_1:I_1\to C$ is a local parametrization and $\f:I_2\to I_1$ is a diffeomorphism, then the function $F^{\textstyle{\c}_{\scriptscriptstyle{2}}}$ associated to the local parametrization $\c_2:=\c_1\circ\f$ reads
\be
\label{0ord}
F^{{\textstyle{\c}}_{\scriptscriptstyle{2}}}=(\f')^2F^{{\textstyle{\c}}_{\scriptscriptstyle{1}}}\circ\f+\tfrac12S(\f),
\ee
where $S(\f)$ is the Schwarzian derivative of $\f$ \eqref{schwarz}.
\end{prop}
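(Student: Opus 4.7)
The plan is to carry out the change-of-variables dictated by the definition \eqref{lapl1} and the normal form \eqref{eqh}. Well-definedness of the map $\mathcal{F}$ is immediate: equation \eqref{lapl1} determines $\mathcal{L}^\gamma$ uniquely as a second-order operator on functions on $I$, and by Lemma \ref{difhes} the operator $\mathcal{L}^\gamma-d^2/dt^2$ has no first-order term, so $\mathcal{L}^\gamma x = x''+F^\gamma x$ for a well-defined function $F^\gamma$ on $I$. The content of the proposition therefore lies entirely in the transformation rule \eqref{0ord}.

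Fix $\gamma_1:I_1\to C$ and put $\gamma_2:=\gamma_1\circ\varphi$ with $\varphi:I_2\to I_1$. First I would translate the invariance of the Laplace structure under the two gauges into two scalar identities. Since $\dot\gamma_2(s) = \varphi'(s)\,\dot\gamma_1(\varphi(s))$, comparing the two expressions $l=(x_i\circ\gamma_i^{-1})|\dot\gamma_i|^{1/2}$ for one and the same section $l\in \ci(L^{1/2})$ yields
\[
x_2(s)=(\varphi'(s))^{-1/2}\,x_1(\varphi(s)),
\]
and the analogous identification of $\mathcal{L}(l)\in\ci(L^{-3/2})$ gives
\[
(\mathcal{L}^{\gamma_2}x_2)(s)=(\varphi'(s))^{3/2}\,(\mathcal{L}^{\gamma_1}x_1)(\varphi(s)).
\]

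With these two identities in hand, I would substitute \eqref{eqh} on both sides and differentiate $x_2$ twice using the first relation. A short computation shows that the two terms involving $x_1'(\varphi)$ cancel exactly, leaving the combination $\tfrac34(\varphi')^{-2}(\varphi'')^2-\tfrac12(\varphi')^{-1}\varphi'''$ multiplying $x_1(\varphi)$. Since $x_1$ may be prescribed arbitrarily at any given point, the identity forces
\[
F^{\gamma_2}-(\varphi')^2\,F^{\gamma_1}\circ\varphi \;=\; \tfrac12\left(\tfrac{\varphi'''}{\varphi'}-\tfrac32\Bigl(\tfrac{\varphi''}{\varphi'}\Bigr)^{\!2}\right) \;=\; \tfrac12\,S(\varphi),
\]
which is exactly \eqref{0ord}.

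The whole argument is computational; the one point that deserves care is the cancellation of the $x_1'(\varphi)$ terms, which is the explicit manifestation of Lemma \ref{difhes}: the half-density weight $k=1/2$ is precisely what makes $\mathcal{L}^\gamma$ free of first-order derivatives in every gauge, and without this cancellation the left-hand side of \eqref{0ord} would not be a pure function of $\varphi$ at all. An alternative route would be to apply \eqref{comph} after computing the one-form $\theta=\nabla^{\gamma_2}-\nabla^{\gamma_1}$ explicitly in terms of $\varphi'$ and $\varphi''$, but the direct approach above delivers the correct normalization $\tfrac12$ in front of the Schwarzian without any input from the characterization of $S$.
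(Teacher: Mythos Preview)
Your argument is correct and follows essentially the same route as the paper: both identify how a single section $l$ of $L^{1/2}$ is represented in the two gauges (your relation $x_2=(\varphi')^{-1/2}x_1\circ\varphi$ is exactly the paper's rewriting of $(x_1\circ\gamma_1^{-1})|\dot\gamma_1|^{1/2}$ in the $\gamma_2$-frame), compare the two expressions for $\mathcal{L}(l)$, and then carry out the same second-derivative computation in which the $x_1'(\varphi)$ terms cancel. The only cosmetic difference is that the paper keeps $x_2:=x_1\circ\varphi$ and applies $\mathcal{L}^{\gamma_2}$ to $x_2(\varphi')^{-1/2}$, whereas you absorb the factor $(\varphi')^{-1/2}$ into the definition of $x_2$ from the outset.
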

\begin{proof} Let $\LL$ be a Laplace structure on $C$, let $\c_1$ and $\c_2:=\c_1\circ\f$ be two local parametrizations of $C$ and let $x_1:I_1\to\R$ be any smooth function. We denote by $x_2:=x_1\circ\f$. From \eqref{lapl1} and \eqref{eqh} it follows that
\be\label{7}
x_1''+F^{\c_1} x_1=(|\dg_1|^{3/2}\mathcal{L}((x_1\circ\c_1^{-1})|\dg_1|^{1/2}))\circ\c_1
\ee
and 
\be\label{8} x_2''+F^{\c_2} x_2=(|\dg_2|^{3/2}\mathcal{L}((x_2\circ\c_2^{-1})|\dg_2|^{1/2}))\circ\c_2.
\ee
On the other hand, the local vector fields $\dg_1$ and $\dg_2$ along $C$ are related by $\dg_2=\f'\circ\c_2^{-1}\dg_1.$ Using that $x_1\circ\c_1^{-1}=x_2\circ\c_2^{-1}$, we can write the argument of the right hand side of \eqref{7} as $(x_1\circ\c_1^{-1})|\dg_1|^{1/2}=(x_2(\f')^{-1/2})\circ\c_2^{-1})|\dg_2|^{1/2}$.
Applying \eqref{8} to $(x_2(\f')^{-1/2}$ instead of $x_2$ yields 
\be\label{9} (x_2(\f')^{-1/2})''+F^{\c_2}(x_2(\f')^{-1/2})=(|\dg_2|^{3/2}\mathcal{L}((x_1\circ\c_1^{-1})|\dg_1|^{1/2}))\circ\c_2.
\ee
Comparing \eqref{7} and \eqref{9} we thus obtain
\be\label{10} (x_2(\f')^{-1/2})''+F^{\c_2}(x_2(\f')^{-1/2})=(\f')^{3/2}(x_1''+F^{\c_1} x_1)\circ\f.
\ee
Since $x_2=x_1\circ\f$, we have $x_2'=\f'x_1'\circ\f$ and $x_2''=(\f')^2x_1''\circ\f+\f''x_1'\circ\f$, whence
\bea(x_2(\f')^{-1/2})''&=&x_2''(\f')^{-1/2}+2x_2'((\f')^{-1/2})'+x_2((\f')^{-1/2})''\\
&=&(\f')^{3/2}x_1''\circ\f+(\f')^{-1/2}\f''x_1'\circ\f-(\f')^{-1/2}\f''x_1'\circ\f+x_2((\f')^{-1/2})''\\
&=&(\f')^{3/2}x_1''\circ\f+x_2((\f')^{-1/2})''.
\eea
From \eqref{10} we thus get
$$(\f')^{-1/2}F^{\c_2}+((\f')^{-1/2})''=(\f')^{3/2}F^{\c_1}\circ\f,$$
which is equivalent to \eqref{0ord}.
\end{proof}

Conversely, every such map $\mathcal{F}$ satisfying the above gauge transformation rule defines a Laplace structure on $C$. Indeed, for every local parametrization $\c:I\to C$, the function $F^\c$ on $I$ defines by \eqref{eqh} a second order differential operator $\LL^\c$ on functions, which in turn defines a second order differential operator $\LL$ on $L^{1/2}$ (along the image of $\c$). The fact that this operator does not depend on the choice of $\gamma$ follows, by reversing the previous argument, from the transformation rule \eqref{0ord}.

We state now the main result of this section:
\begin{thrm}\label{main}
Let $C$ be a closed curve. Then there is a
canonical equivalence between:
\bi
\item Projective structures on $C$;
\item Laplace structures $\LL:\ci(L^{1/2})\ra \ci(L^{-3/2})$.
\ei
Moreover, if $\mathcal{A}$ is the projective atlas corresponding to $\LL$, we have:
\bi
\item[(a)] For any local parametrization $\c:I\to C$, the induced operator
  $\mathcal{L}^\c$ acting on functions on $I$ is the second derivative (i.e., ${F}^\c\equiv 0$) 
if and only if $\c$ is locally the inverse of a projective chart.
\item[(b)] For any choice of an orientation of $C$ (and thus of an isomorphism $L\simeq \T C$) a local non-vanishing section $l$ 
of $L^{1/2}$ satisfies the Laplace equation
\be\label{lll}\mathcal{L}l=0,\ee 
if and only if there exists a projective chart compatible with the orientation whose inverse $\gamma$ satisfies $\dg=l^2$.
\item[(c)] For any pair of linearly independent solutions
$l_1,l_2$ of Equation \eqref{lll}, the map
\be\label{l12}l_{1,2}:C\ra\R^*,\ l_{1,2}(p):=\frac{l_1(p)}{l_2(p)},\ee
defined where $l_2\ne 0$, is a projective chart.
\ei
\end{thrm}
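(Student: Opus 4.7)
The plan is to construct the correspondence in both directions, verify it is bijective, and then unpack the three additional properties.

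For the direction from projective to Laplace structures, I would start with a projective atlas $\mathcal{A}$ and declare $F^\gamma := 0$ whenever $\gamma$ is the inverse of a chart in $\mathcal{A}$. Two such inverses differ by a homography $f_A$, which satisfies $S(f_A) = 0$, so applying \eqref{0ord} shows that the conditions $F^{\gamma_1} = 0$ and $F^{\gamma_2} = 0$ are mutually consistent. For a general parametrization $\gamma$, define $F^\gamma := \tfrac12 S(\varphi)$ where $\varphi := \psi_0 \circ \gamma$ for any local projective chart $\psi_0 \in \mathcal{A}$; the cocycle identity $S(f_A \circ \varphi) = S(\varphi)$ makes this independent of the choice of $\psi_0$, and the rule \eqref{0ord} guarantees compatibility with the previous definition. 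By the converse part of Proposition \ref{la-hi}, this yields a Laplace structure $\LL$.

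For the reverse direction, given $\LL$ and a point $p \in C$, pick any local parametrization $\gamma_0$ near $p$; via \eqref{eqh}--\eqref{lapl1}, solutions $l$ of $\LL l = 0$ correspond in this chart to solutions of the linear second-order ODE $x'' + F^{\gamma_0} x = 0$, and standard ODE theory produces a non-vanishing local solution $l$. Fixing an orientation so that $L \simeq \T C$, the vector field $l^2$ is non-vanishing, and any integral parametrization $\gamma$ with $\dot\gamma = l^2$ makes $l$ correspond to the constant function $1$, so $\LL l = 0$ forces $F^\gamma \equiv 0$. The transition $\varphi$ between two such parametrizations satisfies $S(\varphi) = 0$ by \eqref{0ord}, hence is a homography, and the inverses of all such parametrizations assemble into a projective atlas $\mathcal{A}$. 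The two constructions are tautologically mutually inverse, which is also the content of assertion (a).

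Assertion (b) follows by reading the same computation in both directions: under the identification $L \simeq \T C$, the vector field $l^2$ attached to any non-vanishing $l \in \ci(L^{1/2})$ admits an integral parametrization $\gamma$ with $\dot\gamma = l^2$, so $|\dot\gamma|^{1/2} = l$ and $l$ corresponds to the constant function $1$ in the chart $\gamma^{-1}$; hence $\LL l = 0 \Leftrightarrow F^\gamma \equiv 0 \Leftrightarrow \gamma^{-1} \in \mathcal{A}$. For (c), work in a projective chart $\psi$ with inverse $\gamma$, so $F^\gamma \equiv 0$; two linearly independent solutions $l_1, l_2$ of $\LL l = 0$ correspond to affine functions $x_i(t) = a_i t + b_i$ with non-zero Wronskian $a_1 b_2 - a_2 b_1$, so $l_{1,2} = f_A \circ \psi$ with $A = \bpm a_1 & b_1 \\ a_2 & b_2 \epm \in \GL(2,\R)$, exhibiting $l_{1,2}$ as a projective chart. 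The main subtlety, where the whole argument rests, is the interplay between the Schwarzian derivative and the gauge rule \eqref{0ord}: everything hinges on the classical characterization of homographies as the local diffeomorphisms with vanishing Schwarzian, which is exactly what makes the bijection $\mathcal{A} \leftrightarrow \{\gamma : F^\gamma = 0\}$ well-defined and mutually inverse at both ends.
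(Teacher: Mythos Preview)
Your argument is correct and follows the paper's strategy closely: the forward direction (defining $\mathcal{L}$ by declaring $F^\gamma=0$ on inverses of projective charts and invoking \eqref{0ord} together with $S(f_A)=0$) and the treatment of (a), (b), (c) are essentially identical to the paper's.

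The one genuine difference is in the reverse direction. The paper constructs a projective parametrization by picking an arbitrary $\gamma_0$ and solving directly the third-order nonlinear Schwarzian equation
\[
S(\varphi)+2(\varphi')^2 F^{\gamma_0}\circ\varphi=0
\]
for the reparametrization $\varphi$, appealing to local existence for ODEs. You instead produce a non-vanishing local solution $l$ of the \emph{second-order linear} equation $x''+F^{\gamma_0}x=0$, and then obtain the projective parametrization as an integral curve of $l^2$. This is the classical reduction of the Schwarzian equation to Hill's equation, and it has the advantage of being more constructive and of establishing (b) simultaneously with the existence of the projective atlas, rather than as a separate verification afterwards. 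The paper's route is shorter to state but treats the existence of projective charts and assertion (b) as logically independent steps; your organization makes the link between solutions of $\mathcal{L}l=0$ and projective charts the engine of the whole proof.
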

\begin{proof}
Let $\mathcal{A}$ be a projective structure on $C$ as in Definition \ref{defproj}. The charts of $\mathcal{A}$ will be referred to as {\em projective charts}. The inverse, $\c_1:I_1 \to \c_1(I_1)\subset C$ of a projective chart is a local parametrization of $C$.
We define a Laplace structure along $\c_1(I_1)$ by 
$$\mathcal{L}_1:=\Hss^{\c_1}\mbox{ on sections of } L^{1/2}.$$
This is equivalent to say that the function $F^{\c_1}$ defined by $\mathcal{L}_1$ in Proposition \ref{la-hi} vanishes. If $\c_2:I_2\to C$ is the inverse of another projective chart, one can write on $\c_2^{-1}(\c_1(I_1))$ the relation $\c_2=\c_1\circ\f$ for some homography $\f:\c_2^{-1}(\c_1(I_1))\to\c_1^{-1}(\c_2(I_2))$.
Because of \eqref{0ord} and by the
well-known fact that homographies have vanishing Schwarzian
derivative, it turns out that $\mathcal{L}_1$ and $\mathcal{L}_2$ coincide on their common domain of definition. Consequently, the local Laplace structures defined in this way glue together to define a global 
Laplace structure $\LL^\mathcal{A}$ on $C$.

Conversely, suppose $\mathcal{L}$ is a Laplace structure on $C$. We claim that around any point 
there exist local parametrizations $\c$ such that $F^\c=0$. Indeed, pick any local
parametrization $\c_0$ of $C$ and look for a re-parametrization
$\c:=\c_0\circ\f$ such that $F^{\c}=0$.
From \eqref{0ord}, this is a third order non-linear differential equation for $\f$:
$$S(\f)+2(\f')^2F^{\c_0}\circ\f=0,$$
which obviously has local solutions $\f$ around each point. 

If $\c_1$ and $\c_2$ are local
parametrizations with ${F}^{\c_1}={F}^{\c_2}=0$, and $p$ is any point in $\mathrm{Im}(\c_1)\cap\mathrm{Im}(\c_2)$, then 
the local inverses of $\c_1$ and $\c_2$ near $p$ are real-valued charts on $C$ whose coordinate change $\f:= \c_1^{-1}\circ\c_2$ 
is a homography because by \eqref{0ord} it satisfies the equation $S(\f)=0.$ The set of charts constructed in this way defines a projective atlas $\mathcal{A}^\LL$. The correspondences $\mathcal{A}\mapsto \LL^\mathcal{A}$ and $\LL\mapsto \mathcal{A}^\LL$ are clearly inverse to each other.

We now turn to the second part of the statement. The point (a) follows directly from \eqref{eqh}. 
To prove (b), note that every non-vanishing section $l$ of
$L^{1/2}$, together with the choice of an orientation of $C$ (inducing an isomorphism of $\T C$ with $L$) defines local parametrizations $\c$ (unique up to composition with a translation) such that $\dg=l^2$. 
If $\LL l=0$ then \eqref{lapl1} and \eqref{eqh} applied to the constant function $x\equiv 1$ yield $F^\c\equiv 0$, so $\c$ is by (a) the inverse of a projective chart. Conversely, if $\c$ is the inverse of a projective chart then $F^\c=0$ and from \eqref{lapl1} and \eqref{eqh} it follows that $l:=|\dg|^{1/2}$ satisfies $\LL l=0$.

For (c), we choose a local projective parametrization (i.e. inverse of projective chart) $\gamma:I\to C$. By (a), we have $F^\c=0$. We write (for $i=1,2$) $l_i=(x_i\circ\c^{-1})|\dg|^{1/2}$, where $x_i:I\to\R$ are smooth functions. Then by \eqref{lapl1} and \eqref{eqh} we get $x_i''=0$, so $f:=x_1/x_2$ is a homography. Consequently $l_1/l_2=f\circ\gamma^{-1}$ is the composition of a homography with a projective chart, i.e. it is a projective chart as well.
\end{proof}
\begin{remark}\label{iso-lap-proj} The map $C\ni x \mapsto l_1(x)/l_2(x)\in\R$ from the point (c) in Theorem \ref{main} is defined only on the open set where the solution $l_2$ of the Laplace equation does not vanish. Instead we can consider, for any pair $l_1,l_2$ of linearly independent solutions of \eqref{lll} (note that they are defined globally on $\tilde C\simeq\R$ as solutions to a linear ODE), the map
\be\label{proj-pair}[l_1:l_2]:\tilde C\ra \rp^1,\ x\mapsto [l_1(p(x):l_2(p(x)],\ee
which is well-defined because the isomorphisms of $L^k\oplus L^k$ with $\R^2$ induced by any trivialization of $L$ all induce the same identification between $\mathbb{P}(L^k\oplus L^k)$ and $\mathbb{P}(\R^2)=\rp^1$. As shown in Theorem \ref{main}, the map $[l_1:l_2]$ is locally a projective chart, hence any lift:
\be\label{dev-lap} D_{l_1,l_2}:\tilde C\ra\tp\ee
 to $\tp$ of this map
is a {\em developing map} for the induced projective structure -- see Definition \ref{devhol} below.\end{remark}

\subsection{Laplace structures on curves in conformal/Möbius geometry}

A {\em Laplace structure} \cite[Definition 3.8]{mlc} on a conformal manifold (of any dimension $n\ge 1$) is defined as a second-order linear differential operator, with vanishing first order term and symbol equal to the conformal contraction ($c:\T^*M\ot\T^*M\ra\L^{-2})$ on sections of $L^{1-n/2}$, cf. \cite{mlc}. For $n=1$, we get Definition \ref{lapla} above. 

Moreover, if $M$ is conformal manifold of dimension $n\ge 3$ or a Möbius surface, then any curve $C\subset M$ turns out to inherit a canonical Laplace structure induced by the ambient space \cite[Proposition 4.25]{mlc}:
\begin{prop}\label{lapl-ind} For a curve $C$ in a conformal/Möbius manifold $(M,c)$, the following operator is a Laplace structure which only depends on the embedding and the structure of the ambient space:
\be\label{lapcf} \mathcal{L}^c:\ci(L^{1/2})\ra \ci(L^{-3/2}),\ \mathcal{L}^cl:=\Hss^\nb l+\frac12 \frac{h^\nb(\dot\c,\dot\c)}{c(\dot\c,\dot\c))},\ee
where $\c$ is a local parametrization and $\nb$ an adapted Weyl structure.\end{prop}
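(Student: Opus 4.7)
The plan is to verify that the right-hand side of \eqref{lapcf} is well-defined (i.e.\ independent of the choice of the local parametrization $\gamma$ and of the Weyl structure $\nabla$ adapted to it), and then to observe directly that it is a Laplace structure in the sense of Definition \ref{lapla}. The latter point is almost tautological once well-definedness is established: the term $\Hss^\nb l$, when restricted to pairs $(\dot\gamma,\dot\gamma)$ and divided by $c(\dot\gamma,\dot\gamma)$, coincides with the Hessian on $L^{1/2}$ of the connection $\nabla^\gamma$ induced on $C$ by $\nabla$ (this uses the adaptedness condition $\nabla_{\dot\gamma}\dot\gamma=0$, so that $\nb$ restricts to $\nabla^\gamma$ along $C$), while the second summand is multiplication by a section of $L^{-2}$, i.e.\ a zero-order operator from $L^{1/2}$ to $L^{-3/2}$.

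Invariance under reparametrization is immediate: a change $\gamma\mapsto\gamma\circ\varphi$ scales $\dot\gamma$ by $\varphi'$, so both $h^\nb(\dot\gamma,\dot\gamma)$ and $c(\dot\gamma,\dot\gamma)$ scale by $(\varphi')^2$ and their ratio is unchanged; the Hessian term is intrinsic. The real content is invariance under the change of the adapted Weyl structure. Given two such structures $\nb$, $\nb'$ related by a $1$-form $\theta$ via \eqref{Weylrel}, the condition that both be adapted to $\gamma$ yields
\[
0=\nb'_{\dot\gamma}\dot\gamma-\nb_{\dot\gamma}\dot\gamma=2\theta(\dot\gamma)\dot\gamma-c(\dot\gamma,\dot\gamma)\theta,
\]
so along $C$ the $1$-form $\theta$ is proportional to the $c$-dual of $\dot\gamma$. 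Combining this constraint with the transformation rule \eqref{shout} for the Schouten--Weyl tensor, one gets
\[
h^{\nb'}(\dot\gamma,\dot\gamma)-h^\nb(\dot\gamma,\dot\gamma)=-(\nb_{\dot\gamma}\theta)(\dot\gamma)+\theta(\dot\gamma)^2-\tfrac12 c(\theta,\theta)\,c(\dot\gamma,\dot\gamma).
\]
An analogous direct computation of $(\Hss^{\nb'}l-\Hss^\nb l)(\dot\gamma,\dot\gamma)$ on sections $l$ of $L^{1/2}$ (performed as in the proof of Lemma \ref{difhes}, but now in the ambient space, using the adaptedness relation above to eliminate the components of $\theta$ transverse to $C$) shows that, after division by $c(\dot\gamma,\dot\gamma)$, this difference is precisely $-\tfrac12$ times the change in the Schouten contribution. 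Hence the two changes cancel and $\mathcal{L}^c$ is intrinsic.

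The main obstacle is the bookkeeping in this last step: $\theta$ is a $1$-form on the ambient $M$, so in principle its covariant derivatives along $\dot\gamma$ involve directions transverse to $C$, and one must verify that precisely those transverse contributions coincide in the two comparisons (Hessian change and Schouten change). This is where the adaptedness constraint $c(\dot\gamma,\dot\gamma)\theta=2\theta(\dot\gamma)\dot\gamma$ is used essentially: it reduces $\theta$ along $C$ to a single scalar multiple of the $c$-dual of $\dot\gamma$, so every transverse quantity can be rewritten in terms of $\theta(\dot\gamma)$ and $c(\dot\gamma,\dot\gamma)$ alone, and the cancellation becomes a one-line algebraic identity. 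Once this is checked, the expression \eqref{lapcf} depends only on the embedded curve $C\subset M$ and the conformal/Möbius structure of $M$, as claimed.
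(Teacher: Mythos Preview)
The paper itself does not give a proof of this proposition; it simply cites \cite[Proposition 4.25]{mlc}. So there is no in-paper argument to compare against, and your outline has to stand on its own.

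There is a genuine gap in the way you split the invariance check. You separate it into (a) reparametrising while keeping $\nabla$ fixed, and (b) changing the adapted Weyl structure while keeping $\gamma$ fixed. These two moves do not compose to cover a general change of adapted pair $(\gamma,\nabla)\to(\gamma',\nabla')$: in (a) the pair $(\gamma\circ\varphi,\nabla)$ is no longer adapted unless $\varphi$ is affine, so you have stepped outside the admissible class; and in (b) your own equation
\[
2\theta(\dot\gamma)\dot\gamma=c(\dot\gamma,\dot\gamma)\theta
\]
forces more than you extract from it. Pairing with $\dot\gamma$ via $c$ gives $2\theta(\dot\gamma)c(\dot\gamma,\dot\gamma)=\theta(\dot\gamma)c(\dot\gamma,\dot\gamma)$, hence $\theta(\dot\gamma)=0$, and then the equation yields $\theta|_C=0$ identically as a $1$-form on $M$ along $C$ --- not merely ``$\theta$ proportional to the $c$-dual of $\dot\gamma$''. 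Consequently both $(\nabla\theta)(\dot\gamma,\dot\gamma)$ and $c(\theta,\theta)$ vanish along $C$, so the ``cancellation'' you describe in step (b) is the trivial identity $0=0$, and the nontrivial content has not been addressed.

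The repair is to vary the pair in one step. If $\nabla$ is adapted to $\gamma$ and $\nabla'$ to $\gamma'=\gamma\circ\varphi$, the normal component of the adaptedness condition still forces $\theta^\sharp$ to be tangent to $C$, but now $\theta(\dot\gamma)$ is nonzero (determined by $\varphi''/(\varphi')^2$). With $\theta^\sharp$ tangent one has $c(\theta,\theta)c(\dot\gamma,\dot\gamma)=\theta(\dot\gamma)^2$, and then \eqref{shout} gives
\[
h^{\nabla'}(\dot\gamma,\dot\gamma)-h^{\nabla}(\dot\gamma,\dot\gamma)=-(\nabla_{\dot\gamma}\theta)(\dot\gamma)+\tfrac12\theta(\dot\gamma)^2,
\]
while \eqref{comph1} on $C$ (with the restricted $1$-form $\theta|_{TC}$) gives the Hessian change as $\tfrac12\bigl((\nabla_{\dot\gamma}\theta)(\dot\gamma)-\tfrac12\theta(\dot\gamma)^2\bigr)$. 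These are exact negatives after the factor $\tfrac12$ in \eqref{lapcf}, which is the genuine cancellation. Your final paragraph is essentially the right computation, but it must be applied to this situation rather than to the degenerate one where $\theta|_C=0$.
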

As a consequence of Theorem \ref{main}, any curve in a conformal/Möbius ambient space inherits a projective structure, and the local projective parametrizations $\c$ for this structure are such that $h^\nb(\dot\c,\dot\c)=0$ for the corresponding adapted Weyl structure. We can therefore make the connection with the {\em special parametrizations} defined in Definition \ref{speccf}:
\begin{prop}\label{specproj} For a curve in a conformal/Möbius manifold, the {\em special} parametrizations (for which the tangential conformal acceleration vanishes) are exactly the projective parametrizations for the induced Laplace structure.\end{prop}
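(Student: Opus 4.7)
The plan is to express both conditions of the statement in terms of a single local scalar, namely $h^\nabla(\dot\gamma,\dot\gamma)$, where $\nabla$ is a Weyl structure adapted to $\gamma$, i.e. satisfying $\nabla_{\dot\gamma}\dot\gamma = 0$ along $C$. Being a special parametrization means, by Definition \ref{speccf}, that the tangential part of $a_c(\gamma) = c(\dot\gamma,\dot\gamma)h^\nabla(\dot\gamma)$ vanishes; being a projective parametrization for $\mathcal{L}^c$ means, by Theorem \ref{main}(a), that the zero-order coefficient $F^\gamma$ defined by \eqref{lapl1} and \eqref{eqh} is identically zero. I will show that both conditions are equivalent to the pointwise vanishing of $h^\nabla(\dot\gamma,\dot\gamma)$ along $\gamma(I)$.

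First I would compute $F^\gamma$ directly from \eqref{lapcf}. Since $\nabla$ is adapted to $\gamma$, the sections $\dot\gamma$ and hence $|\dot\gamma|^{1/2}$ are $\nabla$-parallel along $C$. For any smooth $x:I\to\R$ and $l := (x\circ\gamma^{-1})|\dot\gamma|^{1/2}$ one then obtains $\Hss^\nabla l(\dot\gamma,\dot\gamma) = \nabla_{\dot\gamma}\nabla_{\dot\gamma} l = (x''\circ\gamma^{-1})|\dot\gamma|^{1/2}$, which via the canonical isomorphism $T^*C\otimes T^*C\otimes L^{1/2}\simeq L^{-3/2}$ translates into $\Hss^\nabla l = (x''\circ\gamma^{-1})|\dot\gamma|^{-3/2}$. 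The zero-order term in \eqref{lapcf}, after cancelling $c(\dot\gamma,\dot\gamma) = |\dot\gamma|^{2}$, rewrites as $\tfrac{1}{2}h^\nabla(\dot\gamma,\dot\gamma)(x\circ\gamma^{-1})|\dot\gamma|^{-3/2}$. Comparing with \eqref{lapl1} and \eqref{eqh} yields $F^\gamma = \tfrac{1}{2}h^\nabla(\dot\gamma,\dot\gamma)\circ\gamma$.

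Next, I would extract the tangential component of $a_c(\gamma)$. By construction, the weighted vector $h^\nabla(\dot\gamma)$ is $c$-dual to the 1-form $h^\nabla(\dot\gamma,\cdot)$, so $c(h^\nabla(\dot\gamma),\dot\gamma) = h^\nabla(\dot\gamma,\dot\gamma)$. Decomposing $a_c(\gamma) = c(\dot\gamma,\dot\gamma)h^\nabla(\dot\gamma)$ orthogonally with respect to $c$ gives $a_c^T(\gamma) = \tfrac{c(a_c(\gamma),\dot\gamma)}{c(\dot\gamma,\dot\gamma)}\dot\gamma = h^\nabla(\dot\gamma,\dot\gamma)\dot\gamma$. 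Therefore $a_c^T(\gamma)\equiv 0 \Leftrightarrow h^\nabla(\dot\gamma,\dot\gamma)\equiv 0 \Leftrightarrow F^\gamma\equiv 0$, which by Theorem \ref{main}(a) is equivalent to $\gamma$ being projective for the induced Laplace structure.

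The main technical point is to keep track of conformal weights (that $h^\nabla$ has intrinsic weight $0$, that $c(\dot\gamma,\dot\gamma) = |\dot\gamma|^{2}$ has weight $2$, and that $(d\gamma)^{\otimes 2}$ corresponds to $|\dot\gamma|^{-2}$ under the canonical identification $(T^*C)^{\otimes 2}\simeq L^{-2}$); once this bookkeeping is handled, the proof reduces to the two short computations above. As a consistency check, the identity $F^\gamma = \tfrac{1}{2}h^\nabla(\dot\gamma,\dot\gamma)\circ\gamma$ is manifestly independent of the choice of adapted Weyl structure, in agreement with the invariance of $a_c^T(\gamma)$ recalled in the excerpt (see \cite[Lemma 4.2]{mlc}).
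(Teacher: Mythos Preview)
Your proof is correct and follows exactly the approach the paper intends: the sentence immediately preceding the proposition already notes that the projective parametrizations for $\mathcal{L}^c$ are precisely those with $h^\nabla(\dot\gamma,\dot\gamma)=0$ for an adapted Weyl structure, and you have simply made explicit the two short computations (that $F^\gamma=\tfrac12 h^\nabla(\dot\gamma,\dot\gamma)\circ\gamma$ and that $a_c^T(\gamma)=h^\nabla(\dot\gamma,\dot\gamma)\dot\gamma$) which the paper leaves to the reader. Note that you have tacitly, and correctly, supplied the missing factor $l$ in the zero-order term of \eqref{lapcf}, which is a typo in the displayed formula.
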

Therefore, the projective structure on a curve in a conformal/Möbius ambient space can be either seen via the projective charts from Theorem \ref{main} (c) (for the induced Laplace structure) or via the parametrizations that have vanishing  tangential part of the conformal acceleration.

\section{Projective structures on curves}\label{psc}

In this section we discuss the projective structures on open and closed curves from the classical viewpoint of Felix Klein's Erlangen Program, and present a complete classification of these structures up to isomorphism, see also
\cite{gold}, \cite{hit}, \cite{ku}.

 A homography $f_A$ as in Definition \ref{defproj} actually only depends on the image $\hat A\in \PGL(2,\R)$ of the matrix $A$, so we may also think of a projective structure as of a $(G,X)$-structure in the sense of Thurston (see also the works of Kuiper, Kobayashi, Ehresmann, Klein), where $G:=\PGL(2,\R)$ and $X=\rp^1$. We recall:
  \begin{ede} Let $G$ be a Lie group acting effectively and transitively on the manifold $X$. A {\em $(G,X)$-structure} on a manifold $M$ is an atlas of diffeomorphisms $\varphi:U\ra\varphi(U)\subset X$ such that, for any two charts $\varphi_1,\varphi_2$, on each connected component of its domain of definition, the transition function $\varphi_2\circ\varphi_1^{-1}$ coincides with the action on that connected component of an element of $G$.

  A map $f:M\ra N$ between two $(G,X)$-manifolds is called a {\em $G$-map} if, for every charts $\varphi:U\ra \varphi(U)\subset X$, $\psi:V\ra \psi(V)\subset X$ on $M$, resp. $N$, with $U$ connected and small enough such that $f(U)\subset V$, the composition $\psi\circ f\circ \varphi^{-1}$ is given by the action of an element $g\in G$.\end{ede}

If $G$ is connected and if $\tl G$ acts effectively on $\tl X$, it is equally possible to consider that a $(G,X)$-structure induces a $(\tl G,\tl X)$-structure and conversely, where $\tl G$, $\tl X$ denote the universal covers of $G$, resp. $X$. This viewpoint is particularly useful when defining the {\em developing map} and its {\em holonomy}  (see below). As all curves are orientable, there is no loss of generality to first consider {\em oriented} projective structures on $C$ compatible with a given orientation (i.e., an atlas of charts sch that the transition functions are orientation-preserving homographies of type $f_A$ with $A\in\SL(2,\R)$) and then study orientation-reversing projective isomorphisms.

An {\em oriented projective} structure on $C$ is thus a $(\PSL(2,\R),\rp^1)$-structure or, equivalently, a $(\tsl,\tp)$-structure.  The advantage of this latter description is that both the model space and the structure group are simply connected; its drawback is that $\tsl$ is not a linear group.

\subsection{The developing map and holonomy of a closed curve}

The general theory of $(G,X)$-structures \cite{gold} on a manifold $M$ states that such a structure is equivalent with a pair $(D,\rho)$ of a {\em developing map} $D:\tl M\ra X$ which is $\rho$-equivariant, $\rho:\pi_1(M)\ra G$ being the {\em holonomy} homomorphism, up to equivalence: 
\begin{ath}\cite{gold} Let $M$ be a $(G,X)$-manifold. Then there exists a $G$-map $D:\tl M\ra X$, and a group homomorphism $\rho:\pi_1(M)\ra G$ such that 
  \be\label{deveq}\tl D(\c x)=\tl\rho(\c)\tl D(x),\ \forall x\in\tl M,\ \forall \c\in\pi_1(M).\ee
  The set of pairs $(D,\rho)$ as above has an equivalence relation:
  \be\label{eq-dev}(D,\rho)\sim(D',\rho') \Longleftrightarrow \exists g\in G\ :\ D'(.)=g D(.)\mbox{ and }\rho'=g\rho g^{-1},\ee
  and the equivalence classes are in 1--1 correspondence to the isomorphism classes of $(G,X)$-structures on $M$.
\end{ath}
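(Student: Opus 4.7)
The plan is to build the pair $(D,\rho)$ directly from a $(G,X)$-atlas $\{\varphi_i:U_i\to X\}_{i\in I}$ realizing the structure on $M$, by analytic continuation of charts along paths in the universal cover $\tl M$. Fix a base point $\tl x_0\in\tl M$ lying over some $x_0\in U_0$, let $p:\tl M\to M$ denote the covering projection, and set $D:=\varphi_0\circ p$ on the sheet $\tl U_0\subset\tl M$ of $U_0$ through $\tl x_0$. To extend $D$ to an arbitrary $\tl x\in\tl M$, choose a path $\tl\alpha$ from $\tl x_0$ to $\tl x$ and cover its image by a chain of lifted charts $\tl U_0,\ldots,\tl U_k$; on each overlap $\tl U_j\cap\tl U_{j+1}$, the descended transition $\varphi_{j+1}\circ\varphi_j^{-1}$ is the restriction of a \emph{unique} element $g_j\in G$ (uniqueness coming from effectiveness of the $G$-action on $X$). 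Successively replacing $\varphi_{j+1}$ by $g_0^{-1}\cdots g_j^{-1}\circ\varphi_{j+1}$ yields compatible charts along the chain and an unambiguous value $D(\tl x)\in X$.

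Well-definedness demands path-independence. Refinements or small perturbations of the chain covering a fixed path do not change the telescoping product of the $g_j$'s, again by the uniqueness coming from effectiveness; and small homotopies of $\tl\alpha$ itself preserve it as well, since any two nearby paths can be covered by a common refinement. Since $\tl M$ is simply connected, any two paths from $\tl x_0$ to $\tl x$ are homotopic, so $D$ is well defined. By construction $D$ is a local diffeomorphism and a $G$-map in the sense that on each $\tl U_i$ one has $D=h_i\circ\varphi_i\circ p$ for some $h_i\in G$. The holonomy is then defined as follows: for every $\gamma\in\pi_1(M)$ acting as a deck transformation on $\tl M$, both $D$ and $D\circ\gamma$ are $G$-maps realizing the same $(G,X)$-structure, so by effectiveness there is a unique element $\rho(\gamma)\in G$ with $D(\gamma\tl y)=\rho(\gamma)D(\tl y)$ on a neighborhood of $\tl x_0$; connectedness of $\tl M$ together with analytic continuation propagates \eqref{deveq} to all of $\tl M$, and the cocycle relation $\rho(\gamma\delta)=\rho(\gamma)\rho(\delta)$ is then forced by uniqueness.

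For the classification up to equivalence, changing the initial chart $\varphi_0$ to $h\circ\varphi_0$ with $h\in G$ multiplies $D$ by $h$ on the left and conjugates $\rho$ by $h$, which is exactly the equivalence \eqref{eq-dev}; a choice of a different base point lying in the same or in a different sheet amounts to acting by a further element of $G$, again giving \eqref{eq-dev}. Conversely, given any pair $(D,\rho)$ satisfying \eqref{deveq}, one recovers a $(G,X)$-atlas on $M$ by covering $M$ with small evenly covered open sets $U\subset M$ admitting sections $s_U:U\to\tl M$ of $p$ and setting $\varphi_U:=D\circ s_U$: the equivariance \eqref{deveq} forces changes of sheet to alter $\varphi_U$ only by the action of an element of $G$, while the $G$-map property of $D$ ensures that transitions $\varphi_V\circ\varphi_U^{-1}$ between overlapping charts are themselves restrictions of elements of $G$. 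These two constructions are inverse to each other up to the equivalence \eqref{eq-dev}.

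The main obstacle, and the technical heart of the proof, is the verification that the analytic continuation is globally consistent on $\tl M$. This is where the two standing hypotheses on the $(G,X)$-pair are used in an essential way: the effective and transitive action of $G$ on $X$ provides the rigidity needed for uniqueness of the gluing elements $g_j$ and of $\rho(\gamma)$, while the simple connectedness of $\tl M$ upgrades path-independence from a local statement about perturbations of chains to a genuinely global one. Once this is settled, the remainder of the theorem amounts to routine manipulations with the defining identity \eqref{deveq}.
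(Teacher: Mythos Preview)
Your proposal is correct and follows the same approach the paper outlines: the paper does not give a full proof but refers to \cite{gold} and merely sketches that ``a developing map is obtained by gluing $(G,X)$-charts on $M$, using elements of $G$ on the intersections'' and that $\rho$ arises because $\pi_1(M)$ acts on $\tl M$ by $(G,X)$-maps---exactly the analytic-continuation construction you carry out in detail. One small remark: in your final paragraph, only effectiveness (not transitivity) is needed for the uniqueness of the $g_j$ and of $\rho(\gamma)$.
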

\begin{ede}\label{devhol} 
  $D$ is called a {\em developing map} and $\rho$ its {\em holonomy}.\end{ede}
Instead of a full proof for the classical result above (for which we refer to \cite{gold}), we recall here briefly the way the holonomy homomorphism appears in the general theory of $(G,X)$-structures:

 A developing map is obtained by gluing $(G,X)$-charts on $M$, using elements of $G$ on the intersections, and this gluing turns out to yield global maps from $\tl M$ to $X$. The fundamental group $\pi_1(M)$ acts on $\tl M$ by $(G,X)$-maps, which necessarily correspond to elements of $G$, and that defines the group homomorphism $\rho$.

If $\tl G$ acts effectively on $\tl X$, it is equally possible to consider that a $(G,X)$-structure induces a $(\tl G,\tl X)$-structure and conversely. In our case, we can consider an {\em oriented projective structure} on $C$ to be a $(\tsl,\tp)$-structure, whose isomorphism class is in turn characterized by a pair $(D,\rho)$, where $D:\tl C\ra\tp$ is the developing map and $\rho:\pi_1(M)\ra \tsl$ is its holonomy, up to the equivalence relation \eqref{eq-dev}.

Equivalently, an oriented projective structure is defined by an atlas  like in Definition \ref{defproj} but where only homographies $f_A$ with $A\in\SL(2,\R)$ are allowed as transition functions. Clearly, such an atlas $\A_+$ may be completed with orientation-reversing projective charts to get a full projective atlas $\A$. Conversely one can extract from a projective atlas $\A$ on $C$ the orientation-preserving charts $\A_+$ (and also the corresponding atlas $\A_-$ for $C^o$, the same curve endowed with the other orientation). The only question is whether the same curve $C$, the projective structures defined by $(C,\A_+)$ and by $(C^o,\A_-)$ are equivalent {\em as oriented projective structures}, i.e. whether there exists an orientation-reversing automorphism $\tau$ of $(C,\A_+)$. We will answer this question separately for open, resp. closed curves at the end of this Section, resp in Proposition \ref{orrev}.

The developing map $D$ is identified (up to a diffeomorphism of $C$) with its image $U\subset\tp$. For $\rho$, there are two possible cases: $C$ is an open curve (in which case $\rho$ is trivial), and $C$ is a closed curve, when $\rho$ is identified with its image in $\tsl$, an infinite cyclic group acting freely and properly on $U$.

In the latter case, one can further choose from the two possible generators of $\rho(\pi_1(C))$:\begin{ede}\label{specg} For a closed projective curve, with developing map $D:\tl C\ra\tp$ and holonomy $\rho:\pi_1(C)\ra\tsl$, the generator of $\rho(\pi_1(C))$ such that
\be\label{pos-gen} \tl A(x)>x,\ \forall x\in U=D(\tl C),\ee
is called {\em the positive generator} of the holonomy.\end{ede}
We retrieve the following result (used implicitly in \cite{gold} and \cite{ku}):
\begin{ath}\label{clas-g} The oriented projective structures on $C\simeq S^1$, up to diffeomorphism, are in 1--1 correspondence with the equivalence classes of pairs $(U,\tl A)$ where $U\subset \tp$ is an interval and $\tl A\in\tsl$ such that $\tl A(U)=U$ and \eqref{pos-gen} holds, modulo the equivalence relation
  \be\label{equivUA}(U,\tl A)\sim (U',\tl A')\Longleftrightarrow \exists \tl B\in \tsl:\ U'=\tl B(U)\mbox{ and }\tl A'=\tl B\tl A\tl B^{-1}.\ee\end{ath}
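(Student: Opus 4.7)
The plan is to apply the cited $(G,X)$-structure theorem in the case $G=\tsl$, $X=\tp$, $M=C\simeq S^1$, and to repackage the developing-map/holonomy data $(D,\rho)$ as the pair $(U,\tl A)$. Since the oriented projective atlas consists of orientation-preserving charts and both $\tl C$ and $\tp$ are diffeomorphic to $\R$, the developing map $D\colon\tl C\to\tp$ is an orientation-preserving local diffeomorphism between copies of $\R$, hence strictly monotone, hence a diffeomorphism onto its (connected) image, an open interval $U\subset\tp$. Because $\pi_1(S^1)\simeq\Z$, the holonomy $\rho$ is determined by the single element $\rho(1)\in\tsl$, and the equivariance \eqref{deveq} forces $\rho(1)(U)=U$.

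Next I would canonicalize the generator. The deck action of $\Z$ on $\tl C$ is free, so via $D$ the cyclic group $\langle\rho(1)\rangle$ acts freely on $U$. An orientation-preserving self-homeomorphism of an interval without fixed points lies entirely above or entirely below the identity, by continuity of $x\mapsto\rho(1)(x)-x$ and connectedness of $U$. Hence exactly one of $\rho(1)$ and $\rho(1)^{-1}$ satisfies \eqref{pos-gen}, and choosing it amounts to picking one of the two generators of $\pi_1(C)\simeq\Z$; this yields the canonical $\tl A$.

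Then I would translate the equivalence \eqref{eq-dev} into \eqref{equivUA} and verify the reverse construction. Replacing $(D,\rho)$ by $(\tl B\circ D,\tl B\rho\tl B^{-1})$ with $\tl B\in\tsl$ sends $U$ to $\tl B(U)$ and $\tl A$ to $\tl B\tl A\tl B^{-1}$; since every element of $\tsl$ acts on $\tp$ as a strictly increasing homeomorphism, this conjugation preserves \eqref{pos-gen}, so the two equivalence relations coincide. Conversely, given $(U,\tl A)$ as in the statement, $\langle\tl A\rangle$ acts on $U$ freely by \eqref{pos-gen} and properly discontinuously, for an accumulation point in $U$ of an orbit would by continuity be a fixed point of some $\tl A^k$, contradicting $\tl A^k(x)>x$. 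Thus $U/\langle\tl A\rangle$ is a Hausdorff $1$-manifold, compact because $[x_0,\tl A(x_0)]$ is a fundamental domain, and so diffeomorphic to $S^1$; it inherits from $\tp$ an oriented projective structure whose canonical developing-map/holonomy data is $(U,\tl A)$.

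The main obstacle I expect is the book-keeping that makes the correspondence exactly $1$--$1$ rather than $1$--$2$: one must verify that \eqref{pos-gen} is both preserved under conjugation by every $\tl B\in\tsl$ and singles out a unique generator of the infinite cyclic holonomy group, so that no information is lost or doubled when passing between the two sides of the bijection.
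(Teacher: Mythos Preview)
Your proposal is correct and follows precisely the approach the paper itself takes: the paper does not give a separate proof of Theorem~\ref{clas-g} but presents it as an immediate specialization of the cited general $(G,X)$-structure theorem, noting just beforehand that $D$ is a diffeomorphism onto an interval $U\subset\tp$ and that the holonomy, being infinite cyclic, is captured by its positive generator (Definition~\ref{specg}). Your argument simply spells out these steps in detail, including the verification that \eqref{pos-gen} singles out a unique generator and is preserved under conjugation, and the reverse construction of $U/\langle\tl A\rangle$.
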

To complete the classification we will need to study in detail the action of $\tsl$ on $\tp$,  starting from the action of $\SL(2,\R)$ on $\rp^1$.

\subsection{The action of $\tsl$ on $\tp$}

Denote
by $\pj:\R \to \rp^1,$  $\pj(t):=[\cos t:\sin t]$. Clearly, this is a realization of the universal covering of
$\rp^1$. In order to underline the projective structure of the domain of definition of $\pj$, we will always denote it by $\tp$ instead of $\R$. Note that the affine chart $\{[x:1]\ ,\ x\in\R\}\subset\rp^1$, which is the complement of the "point at infinity" $[1:0]\in\rp^1$, is isomorphic via $\pj$ with each of the intervals $(k\pi,(k+1)\pi)\subset\tp$, $k\in\Z$.

The action of
$\PSL(2,\R)$ on $\rp^1$ defined by the homographies lifts to a faithful,
transitive action of $\tsl$ on
$\tp$. The center of $\tsl$ is isomorphic to $\Z$ and coincides with the kernel of the projection $\Phi:\tsl\ra\PSL(2,\R)$, and also with
the automorphism group of the Galois covering $\pj:\tp\ra\rp^1$. We clearly have
\be\label{pj phi}\pj(\tl A(x))=\Phi(\tl A)(\pj(x)),\ \forall x\in\tp,\ \forall\tl A\in\tsl.\ee

We will also consider the "intermediate" covering $\bar\pj:\tp\ra S^1\subset\R^2$, $\bar\pj(t):=(\cos t,\sin t)$, because through it we can also see the intermediate projection $\bar\Phi:\tsl\ra\SL(2,\R)$, which is the double cover of $\PSL(2,\R)$: indeed, an element $\tilde A\in\tsl$ (which therefore acts on $\tp$) is mapped to a matrix $A:=\bar\Phi(\tl A)\in\SL(2,\R)$ such that $\bar\pj\circ\tl A=A\circ\bar\pj$, where $A\in\SL(2,\R)$ acts on $S^1$. Now, the action of $\SL(2,\R)$ on $S^1$ is given by
$$\SL(2,\R)\times S^1\ni(A,x)\mapsto \frac{Ax}{\|Ax\|}\in S^1,$$
where $\|x\|$ is the Euclidean norm of a vector in $\R^2$.
The expression is nonlinear unless $A\in \SO(2)$, the maximal compact subgroup of $\SL(2,\R)$:
\begin{prop}\label{elipt} The translations $\tl\ell_\a:\tp\ra\tp$ with $\a\in\R$, defined by $\tl\ell_\a(x):=x+\a$, are exactly the elements in $\tsl$ that project over the rotation matrices $\ell_\a:=\begin{pmatrix}\cos\alpha&-\sin\alpha\\
                            \sin\alpha&\cos\alpha\end{pmatrix}$ 
acting on $S^1$, in the sense that $\bar\pj\circ\tl\ell_\a=\ell_\a\circ \bar\pj$.
The center of $\tsl$ is equal to $\{\tl\ell_{k\pi}\ ,\ k\in\Z\}=\ker(\Phi)$. \end{prop}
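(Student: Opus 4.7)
The plan is to verify the intertwining relation by direct computation, identify the translations $\tl\ell_\a$ as the lift of the rotation one-parameter subgroup of $\SL(2,\R)$, and then read off the center by comparing the discrete kernels of $\bar\Phi$ and $\Phi$.

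First I would compute, for $x,\a\in\R$,
\[
\bar\pj(\tl\ell_\a(x))=(\cos(x+\a),\sin(x+\a))=\ell_\a(\cos x,\sin x)=\ell_\a(\bar\pj(x)),
\]
using the angle-addition formulas. To see that $\tl\ell_\a$ genuinely lies in $\tsl$ (and not only in $\mathrm{Diff}(\tp)$), I would invoke the universal covering property: the one-parameter subgroup $\SO(2)=\{\ell_\a\}_{\a\in\R}\subset\SL(2,\R)$ descends to a one-parameter subgroup of $\PSL(2,\R)$, whose unique lift through the identity in the universal cover $\tsl$ is a one-parameter subgroup $\{\tl g_\a\}$ acting on $\tp$. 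Its fundamental vector field on $\tp$ covers the rotation vector field on $S^1$ via $\bar\pj$, and the only lift of the latter to $\tp\simeq\R$ is $\partial_t$, so $\tl g_\a=\tl\ell_\a$.

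For the converse direction, suppose $\tl A\in\tsl$ also satisfies $\bar\pj\circ\tl A=\ell_\a\circ\bar\pj$. Then $\tl A\cdot\tl\ell_\a^{-1}$ lies in $\ker(\bar\Phi)$. Since $\bar\Phi:\tsl\ra\SL(2,\R)$ is the universal covering of $\SL(2,\R)$, its kernel is isomorphic to $\pi_1(\SL(2,\R))\simeq\Z$; as each $\tl\ell_{2k\pi}$ already lies in this kernel (because $\ell_{2k\pi}=I$), we obtain $\ker(\bar\Phi)=\{\tl\ell_{2k\pi}:k\in\Z\}$. Hence $\tl A=\tl\ell_{\a+2k\pi}$ is again a translation.

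For the assertion on the center, the covering $\Phi:\tsl\ra\PSL(2,\R)$ is Galois with discrete fibre, so its kernel is automatically contained in $Z(\tsl)$; conversely, any central element of $\tsl$ projects into $Z(\PSL(2,\R))=\{1\}$, giving $Z(\tsl)=\ker(\Phi)$. An element $\tl A\in\ker(\Phi)$ satisfies $\bar\Phi(\tl A)=\pm I=\ell_{k\pi}$ for some $k\in\Z$; by the uniqueness step above it has the form $\tl\ell_{(k+2m)\pi}$ for some $m\in\Z$. Conversely, every $\tl\ell_{k\pi}$ projects to $\ell_{k\pi}=\pm I$, hence to the identity of $\PSL(2,\R)$. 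This yields $\ker(\Phi)=\{\tl\ell_{k\pi}:k\in\Z\}$. The main obstacle is pinning down the lift of $\SO(2)$ correctly via the universal cover; once that identification is made, the remaining statements reduce to direct bookkeeping with the discrete kernels of $\bar\Phi$ and $\Phi$.
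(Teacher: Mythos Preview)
The paper states this proposition without proof, so there is no argument to compare against; your proposal stands on its own and is essentially correct.

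One small point deserves tightening. When you write ``its kernel is isomorphic to $\pi_1(\SL(2,\R))\simeq\Z$; as each $\tl\ell_{2k\pi}$ already lies in this kernel, we obtain $\ker(\bar\Phi)=\{\tl\ell_{2k\pi}:k\in\Z\}$'', the inference is not quite complete: an injective homomorphism $\Z\to\Z$ need not be surjective, so exhibiting a copy of $\Z$ inside $\ker(\bar\Phi)\simeq\Z$ does not by itself give equality. The cleanest fix uses what the paper already records, namely that the action of $\tsl$ on $\tp$ is \emph{faithful}. Any $\tl A\in\ker(\bar\Phi)$ projects to $I\in\SL(2,\R)$, which acts trivially on $S^1$; hence $\tl A$ acts on $\tp$ as a deck transformation of the covering $\bar\pj:\tp\to S^1$, i.e.\ as some $\tl\ell_{2k\pi}$, and faithfulness forces $\tl A=\tl\ell_{2k\pi}$. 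The same reasoning handles $\ker(\Phi)$ directly (deck transformations of $\pj:\tp\to\rp^1$ are the $\tl\ell_{k\pi}$), which also streamlines your final paragraph. With this adjustment the argument is complete.
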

It is well-known that $\SL(2,\R)$ admits the partition $\Hyp\cup\Para\cup\Ell^0\cup\mathcal{Z}$ defined by
$$\Hyp:=\{A\in\SL(2,\R)\ ,\ ,\tr A|>2\},\ \Ell^0:=\{A\in\SL(2,\R)\ ,\ ,\tr A|<2\}, $$
which are both open subsets formed of {\em hyperbolic}, resp. {\em properly elliptic} elements,  $\mathcal{Z}=\{\pm I_2\}$ is the center of $\SL(2,\R)$ (called {\em central}, or {\em degenerate elliptic} elements of $\SL(2,\R)$, cf. \cite{laz}) and $\Para:=\{A\in\SL(2,\R)\sm\mathcal{Z}\ ,\ ,\tr A|=2\}$ is the set of {\em parabolic} elements.

Let us give a classical result about the set of representatives of the conjugacy classes in $\SL(2,\R)$:
\begin{prop}\label{conjsl}
Every matrix $A\in\SL(2,\R)$ is conjugated to one of the following:\bi
\item $\pm H(\l):=\begin{pmatrix}\l&0\\ 0&\l^{-1}\end{pmatrix},\ \l\in (0,1)$;
\item $\pm P^\pm:=\begin{pmatrix}1&\pm1 \\ 0&1\end{pmatrix}$;
\item $\pm \ell_\a$, $\a\in (0,\pi)$;
\item $\pm\id$.\ei
Moreover, the above matrices represent different conjugacy classes. 
\end{prop}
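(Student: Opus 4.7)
The plan is to use the partition of $\SL(2,\R)$ into central, hyperbolic, properly elliptic, and parabolic elements given just before the statement, and to prove the result separately in each stratum by combining a real canonical form for $A$ with a rescaling of the conjugating matrix so that it lies in $\SL(2,\R)$ rather than merely in $\GL(2,\R)$.

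In the central case $A=\pm I_2$ there is nothing to do. In the hyperbolic case $|\tr A|>2$, the characteristic polynomial $X^{2}-(\tr A)X+1$ has two distinct real roots $\mu,\mu^{-1}$, and choosing a real eigenbasis $(v_{1},v_{2})$ gives $B^{-1}AB=\diag(\mu,\mu^{-1})$ for $B=[v_{1}\,|\,v_{2}]$; rescaling $v_{1}$ by $1/\det B$ places $B$ in $\SL(2,\R)$, and a further conjugation by $\ell_{\pi/2}\in\SL(2,\R)$ (which swaps the two diagonal entries) allows us to assume $\mu\in(-1,0)\cup(0,1)$, so that $A$ is conjugate to $\pm H(|\mu|)$. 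In the elliptic case $|\tr A|<2$, the eigenvalues $e^{\pm i\theta}$ admit a complex eigenvector $v=u_{1}+iu_{2}$ with $u_{1},u_{2}\in\R^{2}$ linearly independent; separating real and imaginary parts in $Av=e^{i\theta}v$ shows that in the basis $(u_{2},u_{1})$ the matrix $A$ takes the form $\ell_{\theta}$, and a rescaling of $u_{1}$ (possibly combined with replacing $\theta$ by $\pi-\theta$ to absorb a sign in the determinant) produces a conjugation in $\SL(2,\R)$ to one of $\pm\ell_{\alpha}$ with $\alpha\in(0,\pi)$. Finally, in the parabolic case $|\tr A|=2$, $A\neq\pm I$, the matrix $N:=A\mp I$ is nilpotent of rank $1$; picking $v\notin\ker N$, the basis $(Nv,v)$ brings $A$ to one of $\pm P^{\pm}$, the sign of $\det[Nv\,|\,v]$ deciding between $P^{+}$ and $P^{-}$ (this sign is an invariant of $N$, since replacing $v$ by another $v'\notin\ker N$ multiplies the determinant by a positive factor).

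To show that the resulting representatives lie in pairwise distinct conjugacy classes, the invariants $\tr$ and $\det=1$ handle most comparisons: different $\lambda\in(0,1)$, or different $\alpha\in(0,\pi)$, yield different traces, as do the two global signs. Only two coincidences of trace remain: the pair $(P^{+},P^{-})$ at trace $2$ (and analogously $(-P^{+},-P^{-})$ at trace $-2$), and the pair $(\ell_{\alpha},-\ell_{\pi-\alpha})=(\ell_{\alpha},\ell_{-\alpha})$ at trace $2\cos\alpha\in(-2,2)$. For each pair, supposing a conjugator $B\in\SL(2,\R)$ exists and expanding $BP^{+}=P^{-}B$ (respectively $B\ell_{\alpha}=\ell_{-\alpha}B$) entrywise forces $B$ to have the shape $\begin{pmatrix}a&b\\0&-a\end{pmatrix}$ (respectively $\begin{pmatrix}a&b\\b&-a\end{pmatrix}$), so that $\det B\leq 0$, contradicting $\det B=1$.

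The main obstacle is precisely the transition from $\GL(2,\R)$-conjugacy, for which the real Jordan form gives the classification immediately, to $\SL(2,\R)$-conjugacy: the two $\GL(2,\R)$-coincidences $P^{+}\sim P^{-}$ and $\ell_{\alpha}\sim\ell_{-\alpha}$ are destroyed by the determinant constraint, and they are responsible for the appearance of the $\pm$ signs and of the parameter range $\alpha\in(0,\pi)$ (rather than $(0,2\pi)$) in the statement. Everywhere else the passage is routine, because rescaling a single basis vector modifies $\det B$ by an arbitrary positive factor.
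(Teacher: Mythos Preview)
Your proposal is correct and follows essentially the same strategy as the paper: partition $\SL(2,\R)$ by $|\tr A|$, produce a canonical form in each stratum, and then rule out the two remaining trace-coincidences $P^{+}\!\sim P^{-}$ and $\ell_{\alpha}\!\sim\ell_{-\alpha}$ by showing any conjugator would have non-positive determinant. The only differences are cosmetic: for the parabolic normalization the paper conjugates $P(x)$ by $H(\sqrt{|x|})$ rather than choosing the basis $(Nv,v)$ directly, and for the two distinctness claims the paper uses a bi-vector/orientation argument ($X\wedge\ell_{\alpha}X$ is a positive multiple of $e_1\wedge e_2$) and an explicit eigenvector computation, whereas you solve $BP^{+}=P^{-}B$ and $B\ell_{\alpha}=\ell_{-\alpha}B$ entrywise---both routes arrive at the same $\det B\le 0$ contradiction.
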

\begin{proof} In the hyperbolic, resp. properly elliptic cases, the matrix $A$ has two distinct eigenvalues, therefore it is either real-diagonalizable, hence conjugated to a matrix of the form $\hyp(\l)$, with $\l\not\in \{-1,0,1\}$, or it is complex-diagonalizable, hence conjugated in $\SL(2,\R)$ to a matrix of the form $E_\a$, $\a\in (-\pi,0)\cup (0,\pi)$.

In the hyperbolic case, we have $JH(\l)J^{-1}=H(\l^{-1})$, where $J:=E_{\pi/2}$, which is not surprising as it has the same eigenvalues as $H(\l)$.

For $|\tr A|=2$, we have either a diagonalizable matrix, hence $A=\pm\id$, or $A$ is conjugated to $\pm P(x)$, which is defined as
\be\label{hypara}P(x):=\begin{pmatrix}1&x\\ 0&1\end{pmatrix},\ x\in\R.\ee
On the other hand $H(\l)P(x)H(\l^{-1})=P(\l^{-2}x)$, so by choosing $\l=\sqrt{|x|}$ we obtain that $A$ is conjugated to $\pm P^\pm$.

We need to show that two different representatives in the above list are not conjugated to each other. First, as the trace doesn't change by conjugation, we only need to prove that matrices of the same type (hyperbolic, parabolic, properly elliptic and central) are not conjugated to one another.

For hyperbolic matrices this is clear as $\tr A$ determines uniquely $\l$ such that $|\l|\in (0,1)$ and $\l+\l^{-1}=\tr A$.

For elliptic matrices we only need to show that $E_\a$ is not conjugated to $E_{-\a}$, for $\a\in (0,\pi)$. Suppose $B\in \SL(2,\R) $ such that $BE_\a B^{-1}=E_{-a}$. For a non-zero vector
$X\in\R^2$, we can write 
$$X\wedge E_{-\a}X=BB^{-1}X\wedge B E_\a B^{-1}X = \det B(B^{-1}X\wedge E_\a B^{-1}X)=Y\wedge E_\alpha Y,$$ 
for $Y:=B^{-1}X$. This is a contradiction since for any non-zero vector
$X\in\R^2$, the bi-vector $X\wedge E_{\a}X$ is a positive multiple of the volume element $e_1\wedge e_2$, whereas $X\wedge E_{-\a}X$ is a negative multiple of $e_1\wedge e_2$. 

For parabolic matrices we need to show that $P^+$ and $P^-$ are not conjugated in $\SL(2,\R)$. If this were possible, then there would be a basis $X_1,X_2$ of $\R^2$ such that $P^-X_1=X_1$ (this already implies $X_1=\begin{pmatrix}a\\ 0\end{pmatrix}$, with $a\in\R^*$), and $P^-X_2=X_1+X_2$. Denoting $X_2=\begin{pmatrix} b\\ c\end{pmatrix}$, with $c\ne 0$, this last equation reads
$$\begin{pmatrix}b-c\\ c\end{pmatrix}=\begin{pmatrix}a+b\\ c\end{pmatrix},$$
thus $a=-c$ which implies that the basis $X_1,X_2$ is not positive, i.e., the basis change is made by a matrix with negative determinant, contradiction.
\end{proof}
The sign ambiguities are removed when considering the homographies associated to matrices in $\SL(2,\R)$, i.e. the conjugacy classes in $\PSL(2,\R)$:
\begin{prop}\label{conjpsl} The group $\PSL(2,\R)$ admits the following partition; $\PSL(2,\R)=\hat\Hyp\cup\hat\Para\cup\hat\Ell^0\cup\{\id\}$ invariant by conjugation, and a full set of representatives is given by, respectively:\bi\item $\hat H(\l)$, $\l\in (0,1)$;
\item $\hat P^\pm $;
\item $\hat E_\a$, $\a\in (0,\pi)$;
\item $\id$.\ei\end{prop}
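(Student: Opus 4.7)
The plan is to deduce the proposition directly from Proposition \ref{conjsl} by quotienting $\SL(2,\R)$ by its center $\{\pm I_2\}$, since $\PSL(2,\R)=\SL(2,\R)/\{\pm I_2\}$. Two elements $A,B\in \SL(2,\R)$ become conjugate in $\PSL(2,\R)$ iff $A$ is $\SL(2,\R)$-conjugate to $B$ or to $-B$. Hence each $\PSL(2,\R)$-conjugacy class is the image of either one or two $\SL(2,\R)$-conjugacy classes, namely those of $A$ and $-A$.

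I would first note that the partition is inherited from that of $\SL(2,\R)$: since $\tr(-A)=-\tr(A)$, the quantity $|\tr A|$ is well defined on $\PSL(2,\R)$, and the four strata $|\tr|>2$, $|\tr|=2$ with $A\ne\pm I_2$, $|\tr|<2$, and $A=\pm I_2$ descend, yielding the partition into $\hat\Hyp,\hat\Para,\hat\Ell^0,\{\id\}$, clearly invariant under conjugation in $\PSL(2,\R)$.

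Next, each $\SL(2,\R)$-conjugacy class is represented by one of the matrices of Proposition~\ref{conjsl}, so in $\PSL(2,\R)$ each class admits a representative among $\hat H(\l)$ with $\l\in(0,1)$, $\hat P^\pm$, $\hat E_\a$ with $\a\in (0,\pi)$, or $\id$. Indeed, in the hyperbolic stratum $[H(\l)]=[-H(\l)]=\hat H(\l)$, covering both $\SL(2,\R)$-classes $\pm H(\l)$; in the central stratum $\pm\id$ both collapse to $\id$; and in the elliptic stratum the matrix $-\ell_\a$ equals $\ell_{\a-\pi}$ with $\a-\pi\in(-\pi,0)$, which up to the sign $-I_2$ is identified in $\PSL(2,\R)$ with $\ell_\a$, $\a\in (0,\pi)$. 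In the parabolic stratum $[P^\pm]=[-P^\pm]=\hat P^\pm$, so a priori only two classes $\hat P^+,\hat P^-$ survive.

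Finally, I must verify that the listed representatives are pairwise non-conjugate in $\PSL(2,\R)$. Across strata this is automatic from Step~1. Within $\hat\Hyp$ the invariant $|\tr|=\l+\l^{-1}$ is monotone in $\l\in(0,1)$ so the $\hat H(\l)$ are distinct. Within $\hat\Ell^0$, $|\tr \ell_\a|=2|\cos\a|$ takes each value in $[0,2)$ for either $\a$ or $\pi-\a$; the remaining question is whether $\hat E_\a$ and $\hat E_{\pi-\a}$ coincide, which would require $B\ell_\a B^{-1}\in\{\pm\ell_{\pi-\a}\}$ for some $B\in\SL(2,\R)$. The case $+\ell_{\pi-\a}$ is excluded by the wedge-product argument of Proposition~\ref{conjsl} (the sign of $X\wedge \ell_\a X$ is opposite to that of $X\wedge \ell_{-\a}X$, and $-\ell_\a=\ell_{\a-\pi}$ reduces the question to this sign); the case $-\ell_{\pi-\a}=\ell_{-\a}$ is excluded by the same sign argument directly. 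Similarly $\hat P^+\ne \hat P^-$: the case $BP^+B^{-1}=-P^-$ is excluded by $\tr P^+=2\ne -2=\tr(-P^-)$, and $BP^+B^{-1}=P^-$ is excluded by the explicit basis argument already given in the proof of Proposition~\ref{conjsl}. The main (minor) obstacle is therefore this last sign bookkeeping: once it is done, the list of representatives exhausts each stratum without repetition.
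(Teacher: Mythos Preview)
Your proposal is correct and follows exactly the route the paper intends: the paper does not give a separate proof of this proposition but introduces it with the sentence ``The sign ambiguities are removed when considering the homographies associated to matrices in $\SL(2,\R)$'', treating it as an immediate consequence of Proposition~\ref{conjsl} upon passing to the quotient by $\{\pm I_2\}$. Your write-up simply spells out this reduction in detail; the only quibble is that in the elliptic case the exclusion of $B\ell_\a B^{-1}=\ell_{\pi-\a}$ is more cleanly handled by trace (since $2\cos\a\ne -2\cos\a$ unless $\a=\pi/2$, where the two coincide anyway) than by the wedge argument you invoke, but the conclusion is unaffected.
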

\begin{remark}\label{parfix}A fixed point for the action of $A\in\SL(2,\R)$ on $\rp^1$ corresponds to a real eigenspace of $A$ as a matrix, hence the partitions above can also be retrieved by looking at the number of fixed points in $\rp^1$ of the respective elements: two for hyperbolic, one for parabolic, and none for properly elliptic elements. Central elements act trivially on $\rp^1$.  
\end{remark}
Pulling back through $\phi$ the above partition, we get
\be\label{part}\tsl=\tl\Hyp\cup\tl\Para\cup\tl\Ell^0\cup\tilde{\mathcal{Z}},\ee
a partition in hyperbolic, parabolic, properly elliptic and central elements (which, together, form the elliptic elements) of $\tsl$. 

In order to give in $\tsl$ a full set of representatives of the conjugacy classes, we need the following
\begin{lem}\label{conjcover} Let $\pj:\tl G\ra G$ be the universal cover of a Lie group $G$, and denote by $Z:\ker\pj$, consisting of central elements in $\tl G$. There exists a 1--1 correspondence between conjugacy classes in $\tl G$ and pairs $([g],z)$, with $g\in G$, $z\in Z$, where $[g]$ is the conjugacy class of $g$ in $G$.\end{lem}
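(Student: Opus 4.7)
For this lemma, the plan is to construct the bijection explicitly in both directions and reduce the verification of its well-definedness to a single claim about centralizers. Fix, for each conjugacy class $[g]$ in $G$, a representative $g_0\in[g]$ and a lift $\tl g_0\in\pj^{-1}(g_0)$. Given $\tl g\in\tl G$ with $\pj(\tl g)\in[g]$, choose $\tl h\in\tl G$ with $\pj(\tl h)\pj(\tl g_0)\pj(\tl h)^{-1}=\pj(\tl g)$, and set
\[z(\tl g):=(\tl h\tl g_0\tl h^{-1})^{-1}\tl g\in Z,\]
which indeed lies in $Z$ because it projects to $e\in G$. Define the forward map by $\Phi([\tl g]):=([\pj(\tl g)],z(\tl g))$, and the backward map by $\Psi([g],z):=[\tl g_0\, z]$ using the chosen base lift.

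The only nontrivial point is the independence of $z(\tl g)$ from the auxiliary choice of $\tl h$: two valid lifts differ by right multiplication by some $\tl k\in\pj^{-1}(Z_G(g_0))$, where $Z_G(g_0)$ denotes the centralizer of $g_0$ in $G$. Independence therefore reduces to the key claim that every such $\tl k$ actually centralizes $\tl g_0$ in $\tl G$, not merely its image $g_0$ in $G$.

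To prove this claim, I would examine the continuous map $\tl k\mapsto \tl k\tl g_0\tl k^{-1}\tl g_0^{-1}$ restricted to $\pj^{-1}(Z_G(g_0))$. Its image under $\pj$ is the commutator in $G$ of $\pj(\tl k)$ and $g_0$, which is trivial by definition of the centralizer, so the map lands in the discrete set $Z$ and is therefore locally constant; since it vanishes at $\tl k=e$, it is identically $e$ on the connected component of the identity in $\pj^{-1}(Z_G(g_0))$. Any other component is of the form $z\cdot(\text{identity component})$ for some central $z\in Z$, and conjugation by a central element is trivial, so the map vanishes on the whole of $\pj^{-1}(Z_G(g_0))$.

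Once the claim is established, $\Phi$ and $\Psi$ are well-defined, and checking that they are mutually inverse becomes a short computation directly from the definitions. The main obstacle is the identification of the components of $\pj^{-1}(Z_G(g_0))$ as $Z$-translates of the identity component, which is delicate in general. For the intended application $\tl G=\tsl$, $G=\PSL(2,\R)$, this point is transparent, since the centralizer of any element of $\PSL(2,\R)$ is connected: a one-parameter subgroup (of elliptic, hyperbolic, or parabolic type) for non-central elements, and the whole group for central ones.
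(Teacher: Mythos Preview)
Your proposal is more careful than the paper's own argument, which only observes that $\tl h\tl g_0\tl h^{-1}$ is unchanged when $\tl h$ is replaced by another lift of the \emph{same} $h\in G$, and then concludes without ever addressing different choices of $h$ itself. You correctly isolate the real crux: one must show that every $\tl k\in\pj^{-1}(Z_G(g_0))$ centralizes $\tl g_0$ in $\tl G$. However, your justification of this step---that every connected component of $\pj^{-1}(Z_G(g_0))$ is a $Z$-translate of the identity component---is precisely equivalent to $Z_G(g_0)$ being connected, and this fails for general Lie groups. In fact the lemma as stated is not true in full generality: for $\pj:\mathrm{SU}(2)\to\SO(3)$ and $g_0$ a rotation by $\pi$, the centralizer $Z_{\SO(3)}(g_0)\cong\mathrm{O}(2)$ is disconnected, and the two lifts $\pm\mathrm{diag}(i,-i)\in\mathrm{SU}(2)$ are conjugate to each other, so distinct $z\in Z$ yield the same conjugacy class upstairs.

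You are therefore right to flag the component claim as ``delicate in general'' and to verify separately that every centralizer in $\PSL(2,\R)$ is connected (a one-parameter subgroup for noncentral elements, the whole group for the identity). With that verification in hand, your argument is complete and suffices for the intended application; the paper's proof, read literally, shares the same gap but does not acknowledge it.
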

\begin{proof} Let $\tl g_2=\tl h \tl g_1\tl h^{-1}$, for $\tl g_{1,2},\tl h\in\tl G$ and set $g_{1,2}:=\pj(\tl g_{1,2})$, $h:=\pj(\tl h)$. Clearly $g_1$ is conjugated to $g_2$ in $G$. Conversely, if we have $hg_1h^{-1}=g_2$, the element $\tl h\tl g_1\tl h^{-1}$ is the same for every lift $\tl h$ of $h$ via $\pj$, because two such lifts differ by a central element. Therefore there exists $z\in Z$ such that $\tl g_2=z\tl h \tl g_1\tl h^{-1}$.

This means that a conjugacy class in $\tl G$ is determined by a conjugacy class in $G$ and an element in $Z$. Note that in order to associate to $[\tl g]$ a pair $([g],z)$ we need to fix a (non-canonic) lift of each representative $g\in [g]\subset G$ to an element $\tl g\in\pj^{-1}(g)$. 
\end{proof}
We consider now lifts of the representatives from Proposition \ref{conjpsl}:

The translations in $\R=\tp$ are properly elliptic if and only if $\a\not\in\pi\Z$, they are central otherwise, cf. Proposition \ref{elipt}. From the Lemma above we conclude that $\{\tl\ell_\a\ ,\ \a\in\R\}$ is a full set of representatives of conjugacy classes of elliptic elements $\tl\Ell^0\cup\tl{\mathcal{Z}}$.

For $\tl A\in\tl\Hyp\cup\tl\Para$, $\Phi(\tl A)=\hat A\in\hat\Hyp\cup\hat\Para$ has two, resp. a fixed point on $\rp^1$ and the following can be said about $\tl A$:
\begin{lem}\label{tl a0} Let $\tl A\in\tsl$ and $\hat A:=\Phi(\tl A)\in\PSL(2,\R)$ be its projection. If $\tl A$ fixes $x\in\tp$, then it fixes all points in $\pj^{-1}(\pj(x))=\{x+k\pi\ ,\ k\in\Z\}$. If $\hat A$ has, in addition to $\pj(x)$, also another fixed point $z\in\rp^1$, then $\tl A(y)=y$ for all $y\in\pj^{-1}(z)$. 

Conversely, if $\hat A\in\SL(2,\R)$ has a fixed point in $\rp^1$, then there exists a unique element $\tl A_0\in \tsl$ such that $\Phi(\tl A_0)=A$ and $\tl A_0$ fixes one (hence all) points in $\pj^{-1}(z)$, for any fixed point $z$ of $\hat A$. Moreover, $\Phi^{-1}(\hat A)=\{\tl A_k:=\tl A_0\tl\ell_{k\pi}\ ,\ k\in\Z\}$. \end{lem}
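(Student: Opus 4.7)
The argument rests on two facts from Proposition \ref{elipt}: the center of $\tsl$ equals $\ker\Phi=\{\tl\ell_{k\pi}\mid k\in\Z\}$, and (by connectedness of $\tsl$) every element of $\tsl$ acts on $\tp\simeq\R$ as an orientation-preserving diffeomorphism. The equivariance \eqref{pj phi} will be used to convert fixed-point data on $\rp^1$ into constraints on $\tp$.

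First I would prove that $\tl A$ fixes the whole fiber $\pj^{-1}(\pj(x))$: since each $\tl\ell_{k\pi}$ is central it commutes with $\tl A$, so $\tl A(x+k\pi)=\tl\ell_{k\pi}(\tl A(x))=x+k\pi$. For the second sentence I pick any $y\in\pj^{-1}(z)$; equivariance gives $\pj(\tl A(y))=\hat A(z)=z$, hence $\tl A(y)=y+k\pi$ for some $k\in\Z$. Now $\tl A$ already fixes all $x+m\pi$, so being an orientation-preserving homeomorphism of $\R$ it must preserve each fundamental interval $[x+m\pi,x+(m+1)\pi]$. Choosing the unique $m$ with $y\in[x+m\pi,x+(m+1)\pi]$, both $y$ and $\tl A(y)$ lie in an interval of length $\pi$, forcing $|\tl A(y)-y|<\pi$; combined with $\tl A(y)-y\in\pi\Z$ this yields $\tl A(y)=y$.

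For the converse I take any lift $\tl A'\in\Phi^{-1}(\hat A)$ and any $y\in\pj^{-1}(z)$. By equivariance, $\tl A'(y)=y+n\pi$ for some integer $n$; set $\tl A_0:=\tl\ell_{-n\pi}\tl A'$. Centrality gives $\tl A_0(y)=\tl A'(y)-n\pi=y$, so $\tl A_0$ is a lift of $\hat A$ fixing $y$, and by the first step it fixes all of $\pj^{-1}(z)$. Uniqueness follows because two such lifts differ by a central element $\tl\ell_{k\pi}$, which fixes $y$ only for $k=0$. Finally $\Phi^{-1}(\hat A)$ is the coset $\tl A_0\cdot\ker\Phi=\{\tl A_0\tl\ell_{k\pi}\mid k\in\Z\}$.

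The only step requiring real care is the middle one: using the orientation-preserving property of $\tl A$ together with the fixed points established in the first step to force $\tl A$ to stabilize each fundamental interval, and then combining $|\tl A(y)-y|<\pi$ with $\tl A(y)-y\in\pi\Z$ to conclude. Everything else is a routine application of centrality and of the coset structure of $\Phi^{-1}(\hat A)$.
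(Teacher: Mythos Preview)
Your proof is correct and follows essentially the same route as the paper: centrality of $\tl\ell_{k\pi}$ for the first claim, the monotonicity of $\tl A$ together with the already-established fixed points $x+m\pi$ to trap $y$ and $\tl A(y)$ in a single fundamental interval for the second claim, and the coset description $\Phi^{-1}(\hat A)=\tl A_0\cdot\ker\Phi$ for the converse. The only cosmetic difference is that the paper phrases the middle step as ``$\pj^{-1}(z)$ meets each open interval $(x+k\pi,x+(k+1)\pi)$ in exactly one point, which must therefore be fixed,'' while you argue via $|\tl A(y)-y|<\pi$; these are the same observation.
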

\begin{proof} 
Let $x+k\pi=\tl\ell_{k\pi}(x)$ be any element of $\pj^{-1}(\pj(x))$. Then  $\tl A(x+k\pi)=\tl A(\tl\ell_{k\pi}(x))=\tl\ell_{k\pi}(\tl A(x))=\tl\ell_{k\pi}x=x+k\pi$ for any $k\in\Z$, because $\tl\ell_{k\pi}$ are in the center of $\tsl$. This proves the first claim. 

For the second claim, note that $\pj^{-1}(z)$ is a $\tl A$-invariant subset of $\tp$, and it consists of one point $y_k$ for each interval $(x+k\pi,x+(k+1)\pi)\subset\tp$. The endpoints of this interval are fixed points of $\tl A$, which is also a monotone increasing map from $\tp=\R$ to itself. That means that $\tl A(y_k)$ also belongs to the same interval, hence  $\tl A(y_k)=y_k$. 

Let us now consider $\hat A\in\PSL(2,\R)$ that has a fixed point $z\in\rp^1$. Then any $\tl A\in\Phi^{-1}(\hat A)$ maps $\pj^{-1}(z)$ to itself, in particular, for every $x\in \pj^{-1}(z)$ there exists an integer $k$ (which is actually independent of $x$) such that $\tl A(x)=x+k\pi=\tl\ell_{k\pi}(x)$. Therefore $\tl A=\tl A_k$ as in the statement.\end{proof}
\begin{remark}\label{a0} For an element $A\in\SL(2,\R)$ acting (as $\hat A\in\PSL(2,\R)$) with fixed points on $\rp^1$, we call $\tl A_0$ as in Lemma \ref{tl a0} the {\em canonical lift} of $A$, resp. $\hat A$ in $\tsl$. Note that the elements $\tl A_k$, $k\in\Z^*$, have no fixed points on $\tp$. \end{remark}

\begin{remark}\label{incr} The action of the connected group $\tsl$ on $\tp$ is by monotone increasing diffeomorphisms. The orientation-reversing diffeomorphism of $\tp\equiv\R$, defined by $\tl\tau(t):=-t$ also projects via $\pj$ to a diffeomorphism of $\rp^1$, namely the map $[x:y]\mapsto [x:-y]$ which actually is the homography $f_T$, defined by $T:=\begin{pmatrix}1&0\\ 0&-1\end{pmatrix}\in\GL(2,\R)$, hence $\tau$ is an orientation-reversing projective automorphism of $\tp$. Note that, if $\tl A_0(0)=0$, then $\tl\tau\circ\tl A_0\circ\tl \tau$ is again an orientation-preserving diffeomorphism on $\tp$ that fixes $0$, thus it is equal to some $\tl B_0$ and $Fix(\tl B_0)=-Fix(\tl A_0)\subset\tp=\R$. Consequently, conjugation with $\tl\tau$ maps $\tl\Hyp$, $\tl\Para$, $\tl\Ell^0$ and $\tl{\mathcal{Z}}$ to themselves. Here $Fix(F)$ denotes the set of fixed points of a diffeomorphism $F:\tp\ra\tp$.\end{remark}

\subsection{Open projective curves } An open, oriented, projective curve $C$ can
be seen, via a developing map, as an interval in $\R=\tp$, and we need to classify them up to an isomorphism in $\tsl$. There are three types of open intervals:
\bi\item $\tp$ itself;
\item a half-line;
\item a bounded interval $(\a,\b)$.\ei

Obviously, any element $\tl A\in\tsl$ acts by bijections of $\tp$,
thus $\tp$ can not be isomorphic, as a projective curve, to a
half-line or some bounded interval. On the other hand, because $\tl A:\tp\ra\tp$
is monotone increasing (see Remark \ref{incr}), we have
$$\tl A\left( (-\infty,p_0)\right)=(-\infty,\tl A(p_0)),$$
hence, on one hand, such a {\em left} half-line is not isomorphic to any {\em right}
half-line or to any bounded interval and, on the other hand, any two left (or right)
half-lines are isomorphic (because $\tsl$ acts transitively on
$\tp$).
Thus, $\tp$, left and right half-lines are three distinct isomorphism classes of
oriented open projective curves.

Let us consider the remaining type (bounded
intervals).

\begin{ede}\label{windo} Let $U:=(a,b)\subset\tp$, with
  $a,b\in\R$, $a<b$.
\bi\item If $b=a+k\pi$, $k\in\N$, or, equivalently $b=\tilde
\ell_{k\pi}(a)$, we say that the interval $U$ has {\em
  winding number} $k\in\N$, and write $W(U)=k$.
\item If $a+k\pi<b<a+(k+1)\pi$, (or equivalently $\tl\ell_{k\pi}(a)<b< \tl\ell_{(k+1)\pi}(a)$)  with $k\in\N$, we say that
  the interval $U$ has {\em winding number} $k+\tfrac12$, $k\in\N$ and write $W(U)=n+\tfrac12$.\ei
\end{ede}
For example, an interval projecting via $\pj$ on a maximal affine chart, or $\rp^1$ without a point, e.g. the 
interval $(0,\pi)$, has winding number $1$, and an interval projecting on an affine half-line inside an affine chart, e.g. the interval $(0,\tfrac\pi2)\subset\tp$, has, by the above convention, winding number $\tfrac12$. 

In general the winding number of an interval $U\subset\tp$ is related to the number in $U\cap\pj^{-1}(x)$ (how many times $U$ is {\em wrapped} around $\rp^1$), as follows;  $\{\sharp (I\cap\pj^{-1}(x))\ ,\ x\in\rp^1\}$ always consists of two non-negative integers $k,k+1$, the distinction is made by how many points $x\in\rp^1$ attain the lower number above: if there is only one point, then $W(U)=k+1$, if there are at least two (hence infinitely many), then $W(U)=k+\tfrac12$.

 We can now prove the following:
\begin{prop}\label{open}
The winding number (with values in $\tfrac12\N$) is the unique
invariant of a bounded interval of\ \, $\tp$ under the action of \, $\tsl$:
For $U$ and $U'$ bounded intervals of $\tp$, there exists an
element $\tilde A\in\tsl$ such that $\tilde A(U)= U'$ and if and only
if these intervals have the same winding numbers.  
\end{prop}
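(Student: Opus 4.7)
The plan is to prove the two implications separately. For the direction \emph{equivalence implies equal winding numbers}, I would exploit the intrinsic description of $W(U)$ given in the paragraph following Definition \ref{windo}: the winding number is encoded by the multiset $\{\sharp(U\cap\pj^{-1}(x))\ :\ x\in\rp^1\}$, together with the number of points of $\rp^1$ realizing the minimum cardinality. Since any $\tl A\in\tsl$ satisfies $\pj\circ\tl A=\Phi(\tl A)\circ\pj$ by \eqref{pj phi}, it sends $\pj$-fibers bijectively onto $\pj$-fibers. Hence $\tl A(U)=U'$ yields $\sharp(U'\cap\pj^{-1}(\Phi(\tl A)(x)))=\sharp(U\cap\pj^{-1}(x))$ for every $x\in\rp^1$, and since $\Phi(\tl A)$ is a bijection of $\rp^1$, the two multisets of cardinalities agree, forcing $W(U)=W(U')$.

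For the converse, I would first apply the translation $\tl\ell_{a'-a}\in\tsl$ to reduce to the case where $U=(a,b)$ and $U'=(a,b')$ share the same left endpoint. In the integer case $W(U)=k\in\N$, both right endpoints must equal $a+k\pi$, so $U=U'$ and the identity suffices. In the half-integer case $W(U)=k+\tfrac12$, both $b$ and $b'$ lie in the same open interval $(a+k\pi,a+(k+1)\pi)$, and the problem reduces to finding $\tl A\in\tsl$ fixing $a$ with $\tl A(b)=b'$.

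The heart of the argument is then the analysis of the stabilizer $S\subset\tsl$ of the point $a$. By Lemma \ref{tl a0}, every element of $S$ is the canonical lift of a unique element of the Borel subgroup $B\subset\PSL(2,\R)$ fixing $\pj(a)$, so $\Phi|_S:S\to B$ is a bijection. Moreover, each element of $S$ fixes every point of $\pj^{-1}(\pj(a))=\{a+j\pi\ :\ j\in\Z\}$, and since elements of $\tsl$ act as monotone increasing diffeomorphisms of $\tp$ (Remark \ref{incr}), each open interval $(a+j\pi,a+(j+1)\pi)$ is $S$-invariant and is mapped bijectively by $\pj$ onto $\rp^1\sm\{\pj(a)\}$. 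Using the classical transitivity of the Borel subgroup $B$ on $\rp^1\sm\{\pj(a)\}$, the $\pj$-equivariance forces $S$ to act transitively on each such interval; applied to $(a+k\pi,a+(k+1)\pi)$ this produces the desired $\tl A\in S$ with $\tl A(b)=b'$, and hence $\tl A(U)=U'$.

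I expect the main obstacle to be precisely this transitivity step: one must confirm that the stabilizer of a single point is already large enough to realize every admissible target within one fundamental-domain interval. This is where Lemma \ref{tl a0} (identifying $S$ via canonical lifts) combines with the standard description of Borel subgroups of $\PSL(2,\R)$; the rest of the argument is essentially a formal reduction using the covering $\pj$ and the translation action of $\{\tl\ell_\a\}$.
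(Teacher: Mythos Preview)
Your proof is correct and follows essentially the same skeleton as the paper's: reduce to intervals with a common left endpoint, then use the stabilizer of that endpoint in $\tsl$ to move one right endpoint to the other (or observe they already coincide in the integer case). The differences are only in presentation. For the forward implication, the paper first normalizes both intervals to $(0,p)$, $(0,p')$ and then uses Lemma \ref{tl a0} to show that any $\tl A$ fixing $0$ also fixes each $k\pi$ and preserves every interval $(k\pi,(k+1)\pi)$; you instead invoke the fiber-counting description of $W$ directly via the equivariance \eqref{pj phi}, which is a clean shortcut relying on the same underlying fact. For the converse in the half-integer case, the paper writes down an explicit parabolic element $\tl P_0(x'-x)$ doing the job, whereas you argue abstractly that the stabilizer $S$ of $a$ is (via canonical lifts, Lemma \ref{tl a0}) isomorphic to the Borel subgroup fixing $\pj(a)$ and hence acts transitively on each $(a+j\pi,a+(j+1)\pi)$. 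Your version is slightly more conceptual; the paper's has the virtue of exhibiting the element explicitly.
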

\begin{proof} By mapping $U,U'$ by appropriate elements of $\tsl$, we can assume $U=(0,p)$ and $U'=(0,p')$. Any further action on $U$ needs to be by $\tl A$ that fixes $0\in\tp$. 

Assume that there exists $\tl A\in\tsl$ such that $\tl A(0)=0$ and $\tl A(p)=p'$. If $W(0,p)=k\in\N$
then $p=k\pi$ and $p'=\tl A(k\pi)=k\pi=p$ as implied by Lemma \ref{tl a0}. 

If the winding number of $(0,p)$ is $k+\tfrac12$ then $p\in (k\pi,(k+1)\pi)$ and $p'=\tl A(p)$ lies in the same interval (see the proof of Lemma \ref{tl a0}), hence $W(0,p')=k+\tfrac12$ as claimed.

Conversely, assume that $(0,p)$ and $(0,p')$ have the same
winding numbers. If this number is $k\in\N$ then $p=p'=k\pi$ and we may even take $\tl A$ to be the identity $\tl \ell_0\in\tsl$.

If $W(0,p)=W(0,p')=k+\tfrac12$, then $p,p'\in (k\pi,(k+1)\pi)$. Their projections $\pj(p),\ \pj(p')\in \rp^1$ are equal to $[x:1]$, resp $[x':1]$, simply because they are both different from $\pj(k\pi)=\pj(0)=[1:0]$. We have $[x':1]=\hat\para(x'-x)([x:1])$, where $\para(x'-x)$ is the parabolic matrix defined in \eqref{hypara}. Lemma \ref{tl a0} implies that $\tl\para_0(x'-x)(0)=0$ and $\tl\para_0(x'-x)(p)=p'$ for $\tl\para_0(x'-x)$ the canonical lift of $\para(x'-x)$, see Remark \ref{a0}.

Therefore $\tl\para_0(x'-x)$  maps $(0,p)$ onto $(0,p')$ as claimed.
\end{proof}

We see that the isomorphism class of an open, oriented projective curve is determined by its winding number or, if the latter is infinite, there are three possible isomorphism classes: $(0,\infty)$, $(-\infty,0)$, $\R\subset\rp^1$.

For non-oriented open projective curves, note that the map $s\mapsto -s$ on $\tp$ (which induces an orientation-reversing projective automorphism of $\tp$) does
not change the winding number, but establishes an isomorphism between
a left- and a right-half-line. Therefore, we have (see  also \cite{ku}, \cite{gold}):
\begin{cor}\label{openn} The moduli space $\mathcal{M}^o$ of isomorphism classes of
  connected, open projective curves (disregarding the orientation) is countable: 
$$\mathcal{M}^o=\{\infty,H\}\cup\tfrac{1}{2}\N^*.$$
The point $\infty$ corresponds to $\tp$, $H$ represents the
isomorphism
class of any half-line in $\tp$, and any
number $x$ in $\frac{1}{2}\N$ represents the isomorphism class of a bounded interval in
$\tp$ of winding number $x$. 
\end{cor}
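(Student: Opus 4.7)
The plan is to pass from the oriented classification already established in Proposition \ref{open} to the unoriented one by quotienting by the action of orientation reversal. Recall from Remark \ref{incr} that $\tl\tau(t):=-t$ is an orientation-reversing projective automorphism of $\tp$, so for any bounded interval $U=(a,b)\subset\tp$, the reversed curve $U^o$ is projectively isomorphic to $\tl\tau(U)=(-b,-a)$, and for a half-line $(-\infty,p)$ the reversed curve is isomorphic to $(-p,+\infty)$. Combining this with the action of $\tsl$ (which, being connected, acts by orientation-preserving diffeomorphisms) identifies each orientation-reversed class with another oriented class, and the quotient gives $\mathcal{M}^o$.

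Concretely, I would run through the three types listed before Proposition \ref{open}. First, $\tl\tau(\tp)=\tp$, so the class of $\tp$ is preserved and contributes the single point labelled $\infty$. Second, $\tl\tau$ sends the left half-line $(-\infty,0)$ to the right half-line $(0,+\infty)$, so the two previously distinct oriented classes are identified into a single unoriented class labelled $H$. Third, for a bounded interval $(a,b)$, the image $(-b,-a)$ has the same length $b-a$; checking the two sub-cases of Definition \ref{windo} (whether $b-a\in\pi\N$ or $b-a\in(k\pi,(k+1)\pi)$) shows that $W\bigl(\tl\tau(U)\bigr)=W(U)$, so orientation reversal preserves the winding number.

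Together with Proposition \ref{open}, which identifies the oriented isomorphism classes of bounded intervals with $\tfrac12\N^*$, this yields the announced bijection $\mathcal{M}^o=\{\infty,H\}\cup\tfrac12\N^*$. The only subtle point worth double-checking is the third step: that $W$ is invariant under $\tl\tau$. This is immediate since $W(U)$ only depends on the combinatorial position of the endpoints of $U$ relative to the lattice $\pi\Z\subset\tp$, and the reflection $t\mapsto -t$ permutes this lattice, sending the interval $(k\pi,(k+1)\pi)$ to $(-(k+1)\pi,-k\pi)$; hence equality or strict inequality between $b-a$ and multiples of $\pi$ is preserved.
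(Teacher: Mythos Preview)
Your argument is correct and matches the paper's own reasoning: the paper also passes from the oriented classification to the unoriented one via the orientation-reversing map $s\mapsto -s$ on $\tp$, observing that it preserves the winding number of bounded intervals and identifies left with right half-lines. You have simply spelled out in more detail the verification that $W(\tl\tau(U))=W(U)$, which the paper states without further justification.
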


\section{The isomorphism classes of closed projective curves}
In order to classify, up to isomorphism, the projective structures on a closed curve, we need to refine Theorem \ref{clas-g} using our knowledge of the action of $\tsl$ on $\tp$.

\subsection{The conjugacy classes in $\tsl$}
Using Lemma \ref{conjcover}, we refine the partition \eqref{part} by decomposing it further in conjugacy classes, and give for each of them a standard representative:
\begin{ath}\label{conjtsl} The conjugacy classes of elements in $\tsl$ are the following, with respective representatives:
\bi\item $\displaystyle{\tl\Hyp =\bigcup_{k\in\Z,\ \l\in (0,1)}\tl\Hyp_k(\l)}$, represented by $\tl H_k(\l):=\tl H_0(\l)\tl E_{k\pi}\in \tl\Hyp_k(\l)$, where $\tl H_0(\l)$ is the canonical lift of $\hat H(\l)\in\PSL(2,\R)$;
\item $\displaystyle{\tl\Para=\bigcup_{k\in\Z}\left(\Para_k^+\cup\Para_k^-\right)}$, represented by $\tl P_k^\pm:=\tl P_0^\pm\tl\ell_{k\pi}\in\tl\Para_k^\pm$, where $\tl P_0^\pm$ is the canonical lift of $\hat P^\pm\in\PSL(2,\R)$;
\item $\displaystyle{\tl\Ell^0=\bigcup_{\a\in\R\sm\pi\Z}\tl\Ell_\a}$, represented by $\tl\ell_\a\in\tl\Ell_\a$;
\item $\tl{\mathcal{Z}}=\{\tl\ell_{k\pi}\ ,\ k\in\Z\}$, formed by conjugacy classes containing only one element each.
\ei\end{ath}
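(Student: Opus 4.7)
The plan is to apply Lemma \ref{conjcover} to the universal covering $\Phi:\tsl\ra\PSL(2,\R)$. By Proposition \ref{elipt}, the kernel of $\Phi$ is the center $\tl{\mathcal{Z}}=\{\tl\ell_{k\pi}:k\in\Z\}\cong\Z$, so each $\PSL(2,\R)$-conjugacy class produces a $\Z$-indexed family of conjugacy classes in $\tsl$. The classification then amounts to fixing, for each of the four types of representatives listed in Proposition \ref{conjpsl}, one reference lift in $\tsl$ and multiplying it by the central generators $\tl\ell_{k\pi}$.

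For the hyperbolic representative $\hat H(\l)$ with $\l\in(0,1)$ and the parabolic representatives $\hat P^\pm$, I would use the canonical lifts $\tl H_0(\l)$ and $\tl P_0^\pm$ from Remark \ref{a0}; these are well-defined because the underlying $\SL(2,\R)$ matrices have fixed points on $\rp^1$ (Lemma \ref{tl a0}). Lemma \ref{conjcover} then identifies the conjugacy classes above $[\hat H(\l)]$ with $\tl\Hyp_k(\l):=[\tl H_0(\l)\tl\ell_{k\pi}]$, and similarly $\tl\Para_k^\pm:=[\tl P_0^\pm\tl\ell_{k\pi}]$, yielding the first two items. For the properly elliptic representatives $\hat E_\a$ with $\a\in(0,\pi)$, no canonical lift is distinguished (since $\hat E_\a$ has no fixed point on $\rp^1$, see Remark \ref{parfix}), so I take $\tl\ell_\a\in\tsl$ itself as reference lift; Lemma \ref{conjcover} then gives the classes $[\tl\ell_\a\tl\ell_{k\pi}]=[\tl\ell_{\a+k\pi}]$, $k\in\Z$. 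As $(\a,k)$ runs over $(0,\pi)\x\Z$, the parameter $\a+k\pi$ covers $\R\sm\pi\Z$ bijectively, yielding the third item. Finally, the identity in $\PSL(2,\R)$ lifts to the center $\tl{\mathcal{Z}}$ itself, and since central elements have singleton conjugacy classes, each $\tl\ell_{k\pi}$ forms its own class.

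The content of the argument is essentially bookkeeping built on top of Lemma \ref{conjcover} and Proposition \ref{conjpsl}. The only technical point to verify is that distinct values of $k$ actually produce distinct conjugacy classes in $\tsl$ above a fixed $\PSL(2,\R)$-class; this is precisely what Lemma \ref{conjcover} guarantees once the reference lift has been fixed, and I do not anticipate any further obstacle.
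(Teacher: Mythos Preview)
Your proposal is correct and follows essentially the same route as the paper. The paper does not give a separate proof environment for Theorem \ref{conjtsl}; instead the argument is the discussion immediately preceding it, which applies Lemma \ref{conjcover} to the cover $\Phi:\tsl\to\PSL(2,\R)$, uses Proposition \ref{conjpsl} for the base classes, takes the canonical lifts of Lemma \ref{tl a0}/Remark \ref{a0} for the hyperbolic and parabolic representatives, and uses the translations $\tl\ell_\a$ for the elliptic and central ones---exactly as you outline.
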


\begin{remark} The smooth map $\SL(2,\R)\ni A\mapsto \tr A\in\R$ is a submersion when restricted to $\SL(2,\R)\sm\mathcal{Z}$, and the connected components of the level sets (which are, therefore, closed surfaces in $\SL(2,\R)\sm\mathcal{Z}$) are exactly the conjugacy classes. We can define the map $\k:\tsl\ra\R$, $\k(\tl A);=\tr(\pj(\tl A))$, which is also a submersion on $\tsl\sm\tl{\mathcal{Z}}$, and its level sets are (grouped pairwise by the notation $\k^{-1}(\pm a):=\k^{-1}(a)\cup \k^{-1}(-a)$, for $a>2$, $0\le a<2$ and resp. $a=2$):
$$\k^{-1}\left(\pm\tfrac{\l^2+1}{\l}\right)=\bigcup_{k\in\Z}\tl\Hyp_k(\l),\quad \k^{-1}(\pm 2\cos\a)=\bigcup_{k\in\Z}\tl\Ell_{\a+k\pi},\quad \k^{-1}(\pm 2)=\tl\Para.$$
Here, in order to write each component of a level set exactly once, we make the convention that $\a\in(0,\pi)$ and $\l\in(0,1)$. 
\end{remark}
\subsection{Classification of projective structures on closed curves (with or without a fixed orientation)}
We will simplify the data associated with a projective structure, replacing a pair $(U,\tl A)$ as in Theorem \ref{clas-g} with $\tl A$ alone. As $\tl A$ from the above mentioned theorem satisfies the inequality \eqref{pos-gen}, we define a subset of $\tsl$ where $\tl A$ is contained:
\begin{ede}\label{tsl+} The subset $\tsl_+$ is the set of elements $\tl A$ such that there exists at least one point $x\in\tp$ such that $\tl A(x)>x$.\end{ede}
Clearly, all elements conjugated to $\tl A\in\tsl_+$ are also in $\tsl_+$. For every $\tl A\in \tsl$, $\tl A$ or its inverse are in $\tsl_+$ (sometimes both, see below), and if $\tl A$ has no fixed points on $\tp$, then exactly one of $\tl A$ and $\tl A^{-1}$ belongs to $\tsl_+$. The element $\tl \hyp_0(\l)$ admits points on $\tp$ that satisfy $\tl \hyp_0(\l)(x)>x$ and also points satisfying $\tl \hyp_0(\l)(y)<y$.  On the other hand, we have $\tl\para_0^+(x)\le x$ and $\tl\para^-_0(x)\ge x$ for all $x\in\tp$. We conclude:
\be\label{slplus}\tsl_+=\bigcup_{k\in\N,\ \l\in (0,1)}\tl\Hyp_k(\l)\cup\bigcup_{\a>0}\tl\Ell_\a\cup \bigcup_{k\in\N}\tl\Para_k^-\cup \bigcup_{k\in\N^*}\tl\Para_k^+.\ee
We show now that an isomorphism class of an oriented closed projective curve $C$ is encoded in the positive generator $\tl A$ of its holonomy group, cf Definition \ref{specg}. Note that if $\tl A$ has no fixed points, then there is nothing to show since in that case $C=\tp/{\langle\tl A \rangle}$.

\begin{prop}\label{dev-sl+} Let $\tl A\in\tsl_+$ and let $a<b$ be fixed points of $\tl A$ such that $\tl A(x)>x$, $\forall x\in U:=(a,b)\subset\tp$. Then, every other interval $U'$ with same property is of the form $U'=(a+k\pi,b+k\pi)$ for some $k\in\Z$. Moreover, the projective curves $U/{\langle\tl A \rangle}$ and $U'/{\langle\tl A \rangle}$ are isomorphic.\end{prop}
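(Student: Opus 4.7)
The plan is to analyze the fixed-point set $F:=\mathrm{Fix}(\tl A)\subset\tp$ and the sign of the displacement function $\dl(x):=\tl A(x)-x$ on its complement, exploiting the centrality of $\tl\ell_\pi$ in $\tsl$.

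First I would note that the existence of a fixed point makes $\tl A$ the canonical lift of its projection $\hat A:=\Phi(\tl A)\in\PSL(2,\R)$ (Remark \ref{a0}), and that $\tl A\notin\tl{\mathcal{Z}}$ since $\dl>0$ on $(a,b)$. Thus $\hat A$ is either hyperbolic or parabolic (Remark \ref{parfix}), and Lemma \ref{tl a0} yields $F=\pj^{-1}(\mathrm{Fix}(\hat A))$ --- one $\pi\Z$-coset in the parabolic case, two interlaced cosets in the hyperbolic case. In both situations $F$ is invariant under $\tl\ell_\pi$, since this element lies in the center of $\tsl$ (Proposition \ref{elipt}) and therefore commutes with $\tl A$.

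Second, since $\tl A$ is monotone increasing (Remark \ref{incr}), the continuous function $\dl$ has constant sign on each component of $\tp\setminus F$, and $\dl$ itself is $\pi$-periodic. In the parabolic case, $F=a+\pi\Z$ forces $b=a+\pi$, and every component of $\tp\setminus F$ is a $\pi$-translate of $(a,b)$; the $\pi$-periodicity of $\dl$ then propagates the inequality $\dl>0$ to each of them. In the hyperbolic case, $F=(a+\pi\Z)\cup(b+\pi\Z)$ with $a<b<a+\pi$; the two components $(a,b)$ and $(b,a+\pi)$ inside a period must carry opposite signs of $\dl$, since otherwise $b$ would be a critical point of $\dl$ and therefore $\tl A'(b)=1$, contradicting the fact that a hyperbolic element of $\PSL(2,\R)$ admits one attracting and one repelling fixed point, with derivatives different from $1$. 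Combined with $\pi$-periodicity this shows $\{x\in\tp:\dl(x)>0\}=\bigcup_{k\in\Z}(a+k\pi,b+k\pi)$, so every admissible $U'$ is one of these translates.

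For the isomorphism of quotients, I would use that $\tl\ell_{k\pi}\in\tsl$ is central, hence commutes with $\tl A$, and acts as a projective diffeomorphism of $\tp$. Its restriction to $U$ is a projective isomorphism $U\to U'$ that intertwines the $\langle\tl A\rangle$-actions on the two intervals, and therefore descends to a projective isomorphism $U/\langle\tl A\rangle\to U'/\langle\tl A\rangle$.

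The main technical point is the sign-alternation in the hyperbolic case, which rests on the non-degeneracy of the linearization of $\hat A$ at each of its fixed points; the parabolic case and the construction of the isomorphism are then immediate consequences of the centrality of $\tl\ell_\pi$ and the monotonicity of $\tl A$.
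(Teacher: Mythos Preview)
Your proof is correct. The difference from the paper's argument lies in the first part. The paper invokes conjugation invariance to reduce to the standard representatives $\tl P_0^-$ and $\tl H_0(\l)$, and then reads off the sign of the displacement by a direct coordinate computation on $[\cos\alpha:\sin\alpha]$. You work intrinsically, without normalizing: the $\pi$-periodicity of $\dl(x)=\tl A(x)-x$ (from the centrality of $\tl\ell_\pi$) handles the parabolic case immediately, and in the hyperbolic case you obtain the sign alternation between adjacent components from the fact that a zero of $\dl$ with the same sign on both sides would force $\tl A'(b)=1$, contradicting that a hyperbolic fixed point is either attracting or repelling. Your route avoids coordinates and isolates the single dynamical fact that drives the result (non-degeneracy of the hyperbolic linearization), whereas the paper's computation is shorter once the normal form is in place but hides this mechanism. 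For the isomorphism of the quotients, both arguments are essentially the same --- you spell out the equivariance of $\tl\ell_{k\pi}$ directly, the paper points to the equivalence relation \eqref{equivUA} with $\tl B=\tl E_{k\pi}$.
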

\begin{proof} 
The statement is invariant by conjugation in $\tsl$ since the translation by $k\pi$ in $\R$ is given by the central element $\tl E_{k\pi}$, and conjugation in $\tsl$ preserves the inequalities in $\R=\tp$ (in the sense that if $\tl A=\tl B\tl A'\tl B^{-1}$ and $A(x)>x$ on some interval $U$, then $A'(x)>x$ on $\tl B(U)$.

If $\tl A\in\tsl_+$ has fixed points, we can thus assume up to conjugation that it is either equal to $\tl\para_0^-$, whose fixed points are $\pi\Z$, or to some $\tl\hyp_0(\l)$, $\l\in (0,1)$, whose fixed points are $\frac\pi2\Z$.

In the first case, the interval $U$ is necessarily of the form $(k\pi,(k+1)\pi)$, which trivially implies our claim.

In the second case, since $H(\lambda)$ maps an element $[\cos\alpha:\sin\alpha]\in \rp$ onto $[\lambda\cos\alpha:\lambda^{-1}\sin\alpha]$, whose argument is larger than $\alpha$ for $\alpha\in (0,\frac\pi2)$ and smaller than $\alpha$ on $(\frac\pi2,\pi)$, we obtain that $U$ is necessarily of the form $(k\pi,k\pi+\frac\pi2)$, and again the claim follows.

The last statement follows directly from the fact that the projective structures associated to the pairs $(U,\tl A)$ and $(\tl E_{k\pi}U,\tl A)$ are isomorphic by taking $\tl B=\tl E_{k\pi}$ in Theorem \ref{clas-g}.
\end{proof}
\begin{cor}\label{moduli} The moduli space of oriented projective structures on closed curves is the set of conjugacy classes of elements of $\tsl_+$.\end{cor}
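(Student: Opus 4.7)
My plan is to construct an explicit bijection from equivalence classes of pairs $(U,\tl A)$ as in Theorem \ref{clas-g} to conjugacy classes in $\tsl_+$, by simply forgetting $U$ and sending $[(U,\tl A)]$ to the conjugacy class $[\tl A]$ in $\tsl$. The image lies in $\tsl_+$ by Definition \ref{tsl+}, since $\tl A(x)>x$ for every $x\in U\ne\emptyset$, and conjugacy preserves this property. Well-definedness of the map is immediate from the very shape of the equivalence relation \eqref{equivUA}, which already encodes conjugation by elements of $\tsl$.

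For surjectivity I would start from $\tl A\in\tsl_+$ and exhibit an invariant interval $U$ on which $\tl A$ is positive. If $\tl A$ has no fixed point in $\tp$, then by the monotonicity in Remark \ref{incr} the function $x\mapsto \tl A(x)-x$ has constant sign on $\tp$, and this sign is positive by definition of $\tsl_+$, so $U:=\tp$ works. If $\tl A$ has fixed points, the explicit description \eqref{slplus} reduces the situation, up to conjugation, to the representatives $\tl P^-_0$ or $\tl H_0(\l)$; the case analysis already carried out inside the proof of Proposition \ref{dev-sl+} then locates an interval between consecutive fixed points on which $\tl A$ moves points forward.

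The substantive step is injectivity, and this is the step where I expect the main difficulty to lie. Suppose $[\tl A]=[\tl A']$, i.e.\ $\tl A'=\tl B\tl A\tl B^{-1}$ for some $\tl B\in\tsl$. Then $\tl B(U)$ is an $\tl A'$-invariant interval on which $\tl A'$ is positive; but so is $U'$, so Proposition \ref{dev-sl+} applied to $\tl A'$ forces $U'=\tl\ell_{k\pi}(\tl B(U))$ for some $k\in\Z$ (in the fixed-point-free case both intervals are forced to equal $\tp$, so $k=0$ works automatically). The key observation is then that $\tl\ell_{k\pi}$ is central in $\tsl$, so the element $\tl B':=\tl\ell_{k\pi}\tl B$ simultaneously maps $U$ onto $U'$ and conjugates $\tl A$ to $\tl A'$, yielding the desired equivalence $(U,\tl A)\sim(U',\tl A')$ in the sense of \eqref{equivUA}. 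This is precisely where Proposition \ref{dev-sl+} is indispensable: without the uniqueness of the positivity interval up to central translation, forgetting $U$ would in principle lose genuine information about the projective structure, and the conjugacy class of $\tl A$ would not be a complete invariant.
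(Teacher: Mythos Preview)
Your argument is correct and follows exactly the route the paper intends: the corollary is stated without explicit proof because it is meant to follow immediately from Theorem \ref{clas-g} combined with Proposition \ref{dev-sl+} (and the remark preceding it for the fixed-point-free case), and you have simply written out those details. The key points --- that an invariant interval $U$ with $\tl A(x)>x$ forces $\tl A\in\tsl_+$, that $U=\tp$ when $\tl A$ is fixed-point-free, and that otherwise Proposition \ref{dev-sl+} pins down $U$ up to a central translate $\tl\ell_{k\pi}$ which can be absorbed into the conjugating element --- are precisely the ingredients the paper assembles in the paragraphs leading up to the corollary.
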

We show now that we can drop the word "oriented" from the above result:
\begin{prop}\label{orrev} All closed projective curves admit orientation-reversing projective automorphisms. Therefore, the set of oriented projective structures on a closed curve $C$ coincides with the set of projective structures on $C$.\end{prop}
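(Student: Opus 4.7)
The plan is to reduce to standard representatives and then produce the automorphism explicitly. By Corollary \ref{moduli} together with Proposition \ref{dev-sl+}, it suffices to exhibit, for each standard representative $\tilde A\in\tsl_+$ coming from Theorem \ref{conjtsl} and the associated standard interval $U$, an orientation-reversing projective involution $\sigma:\tp\to\tp$ satisfying $\sigma(U)=U$ and $\sigma\tilde A\sigma^{-1}=\tilde A^{-1}$: such a $\sigma$ normalizes the cyclic holonomy group and therefore descends to an orientation-reversing projective automorphism of $C=U/\langle\tilde A\rangle$.

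My candidate would be the family of involutions
\[
\sigma_{x_0}(t):=2x_0-t=\tilde\ell_{2x_0}\circ\tilde\tau(t),
\]
where $\tilde\tau$ is the orientation-reversing projective map from Remark \ref{incr}. Each $\sigma_{x_0}$ is an orientation-reversing projective involution of $\tp$ (its projection to $\PGL(2,\R)$ is a rotation composed with $\mathrm{diag}(1,-1)$, hence of negative determinant), it fixes $x_0$, and it preserves every interval centered at $x_0$. A direct computation gives $\sigma_{x_0}\tilde\ell_\beta\sigma_{x_0}^{-1}=\tilde\ell_{-\beta}$ for every $\beta\in\R$, which simultaneously handles the elliptic and central classes ($\tilde A=\tilde\ell_\alpha$, $U=\tp$, any $x_0$ works) and shows that every $\sigma_{x_0}$ inverts the central twists $\tilde\ell_{k\pi}$. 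It therefore remains to treat the canonical lifts $\tilde H_0(\lambda)$ and $\tilde P_0^\pm$, for which I would choose $x_0$ to be the midpoint of the associated interval: $x_0=\pi/4$ when $U=(0,\pi/2)$ (hyperbolic case) and $x_0=\pi/2$ when $U=(0,\pi)$ (parabolic case).

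For these choices the projection of $\sigma_{x_0}$ to $\PGL(2,\R)$ equals, up to sign, $\bigl(\begin{smallmatrix}0&1\\1&0\end{smallmatrix}\bigr)$ (hyperbolic) or $\mathrm{diag}(-1,1)$ (parabolic), and an elementary matrix computation shows that these conjugate $H(\lambda)$ to $H(\lambda)^{-1}$ and $P^\pm$ to $P^\mp=(P^\pm)^{-1}$ in $\PSL(2,\R)$. To upgrade this identity to $\tsl$ I would invoke Lemma \ref{tl a0}: with the above choices of $x_0$, $\sigma_{x_0}$ permutes the fixed-point set $\mathrm{Fix}(\tilde A)\subset\pj^{-1}(\mathrm{Fix}(\hat A))$, so $\sigma_{x_0}\tilde A\sigma_{x_0}^{-1}$ also fixes points of $\pj^{-1}(\mathrm{Fix}(\hat A^{-1}))$ while projecting to $\hat A^{-1}$; by the uniqueness in that lemma it must coincide with the canonical lift $\tilde A^{-1}$. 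The remaining hyperbolic and parabolic classes in $\tsl_+$ without fixed points on $\tp$ (where $U=\tp$) then follow from the canonical case by factoring $\tilde A=\tilde A_0\tilde\ell_{k\pi}$ and combining with the central-twist identity of the previous paragraph.

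The step I expect to be the main subtlety is precisely the lift from $\PSL(2,\R)$ to $\tsl$: the matrix computation only determines $\sigma_{x_0}\tilde A\sigma_{x_0}^{-1}$ modulo $\ker\Phi$, and pinning down the correct central factor requires the fixed-point argument via Lemma \ref{tl a0} (or, equivalently, explicit tracking of the central element through the composition using $\sigma_{x_0}\tilde\ell_{k\pi}\sigma_{x_0}^{-1}=\tilde\ell_{-k\pi}$).
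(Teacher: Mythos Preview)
Your proof is correct and follows essentially the same strategy as the paper's: reduce to standard representatives, exhibit an orientation-reversing projective involution that conjugates $\tilde A$ to $\tilde A^{-1}$, verify this at the $\PSL(2,\R)$ level by a matrix computation, and then pin down the lift to $\tsl$ via the fixed-point uniqueness of Lemma \ref{tl a0}. Your packaging with the single family $\sigma_{x_0}$ (taking $x_0$ the midpoint of $U$) is a small refinement over the paper's use of $\tilde\tau$ and $\tilde\tau\tilde J$, since it makes $\sigma(U)=U$ automatic in the affine cases $\tilde P_0^-$ and $\tilde H_0(\lambda)$---a point the paper handles only implicitly (its $\tilde\tau$ sends $U$ to a $\pi$-translate rather than to $U$ itself).
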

\begin{proof} It is enough to prove the statement for  particular representatives $\tl A$ of each conjugacy class in $\tsl_+$. If $\tl A$ has no fixed points on $\tp$, it suffices to construct an orientation-reversing map on $\tp$ (that induces on $\rp^1$ an orientation-reversing homography) such that the holonomy group $\lg\tl A\rg$ is preserved. In all cases we set $\tl\tau:\tp\ra\tp$ by $\tl\tau(x):=-x$ (which induces the matrix $T\in\GL(2,\R)$, see Remark \ref{incr},  and we claim that
\be\label{or-rev}\tau\tl\ell_\a\tau=\tl \ell_{-\a},\quad  (\tau\tl J)\tl\hyp(\l)(\tl J^{-1}\tl\tau)=\tau\tl\hyp_n(\l^{-1})\tau =\tl\hyp_n(\l^{-1}),\quad \tau\tl\para_n^\pm\tau=\para_{-n}^\mp,\ \forall n\in\Z,\ee
where juxtaposition means composition of projective maps on $\tp$.
If $\tl A$ is one of these elements, its conjugate by $\tau$, resp. $(\tau\tl J)$, would be exactly its inverse, thus the group $\lg\tl A\rg$ would be  preserved. 

To prove \eqref{or-rev} we proceed case by case: for $\tl A=\tl\ell_\a$ this is an identity involving affine transformations on $\R=\tp$: $$\tau\tl\ell_\a\tau(x)=-(-x+\a)=x-\a=\tl\ell_{-\a}(x),\ \forall x\in\tp.$$
For the hyperbolic and parabolic cases we first prove the identities for $k=0$, i.e., $\tl A=\tl A_0$ is the canonical lift of a matrix $A\in\SL(2,\R)$ of hyperbolic, resp. parabolic type. It is straightforward that the right hand sides of \eqref{or-rev} also have fixed points on $\tp$, hence the identities are implied by the corresponding identities of matrices, i.e.
$$TJ^{-1}H(\l)JT=TH(\l^{-1})T=H(\l^{-1}),$$
for which the first identity has already appeared in the proof of Proposition \ref{conjsl} and the second follows because $T$ and $H(\l^{-1})$ are both diagonal, hence they commute, and
$$TP(\pm1)T=P(\mp1),$$
which is again a very simple computation.

In general $\tl A=\tl A_0\tl\ell_{k\pi}$, $k\in\Z$, so we apply the already proved identities for $\tl\ell_{k\pi}$ and for $\tl A_0$, thus finishing the proof of \eqref{or-rev}. 
    \end{proof}
\subsection{Classification of Laplace structures on closed curves}
The classification of Laplace structures on closed curves, that can be also obtained by following Theorems \ref{main}, \ref{clas-g} and Proposition \ref{dev-sl+}, can be stated by exhibiting a natural map that behaves well under deformations:
\begin{ath}\label{clasif} There exists a natural map $\Xi$ from the set of equivalence classes of real-valued, $1$-periodic functions $F:\R\ra\R$, up to the action of the diffeomorphism group of $\R/\Z$ described in \eqref{0ord},  to the conjugacy classes of elements in $\tsl_+$. This map is defined as $F\mapsto \tl R(1)$, where $R$ is the solution of the first order differential system $R'=R\begin{pmatrix}0& -F\\ 1&0\end{pmatrix}$ with the initial condition $R(0)=I_2$, and $\tl R:[0,1]\ra\tsl$ is the lift of the path $R:[0,1]\ra\SL(2,\R)$ with $\tl\R(0)=\tl\ell_0$.

The map $\Xi$ is bijective and continuous for the standard topologies of the corresponding quotient spaces.\end{ath}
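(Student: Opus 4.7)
The strategy is to realize $\Xi$ as the composition of three canonical bijections already proved in the paper and then directly identify the output of this chain with $\tilde R(1)$. These bijections are: (a) equivalence classes of $1$-periodic $F$ under the gauge action correspond to isomorphism classes of Laplace structures on $S^1=\R/\Z$, which follows by applying Proposition \ref{la-hi} to the standard parametrization and quotienting by $\operatorname{Diff}(S^1)$; (b) Laplace structures on $S^1$ correspond to (oriented, by Proposition \ref{orrev}) projective structures by Theorem \ref{main}; (c) these are classified by conjugacy classes in $\tsl_+$ by Corollary \ref{moduli}. The bulk of the proof is therefore the identification of $\Xi(F)$ with $\tilde R(1)$ together with the continuity assertions.

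For the identification, fix the pair of solutions $x_1,x_2$ of $x''+Fx=0$ with $x_1(0)=x_2'(0)=1$ and $x_1'(0)=x_2(0)=0$, so that their Wronskian equals $1$. Setting $R(t):=\begin{pmatrix}x_1(t)&x_1'(t)\\ x_2(t)&x_2'(t)\end{pmatrix}$, a direct substitution shows that $R$ solves the ODE in the statement with $R(0)=I_2$. By Remark \ref{iso-lap-proj}, any lift $D:\R\to\tp$ of $[x_1:x_2]$ is a developing map for the projective structure associated with the Laplace structure defined by $F$. The $1$-periodicity of $F$ implies that $t\mapsto R(t+1)$ solves the same ODE with initial value $R(1)$, so by uniqueness $R(t+1)=R(1)R(t)$, whence $D(t+1)=\Phi(\tilde R(1))(D(t))$ in $\rp^1$ and $D(t+1)=\tilde R(1)(D(t))$ in $\tp$ along the lift. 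The derivative of $[x_1:x_2]$ in the affine chart $x_2/x_1$ equals $1/x_1^2>0$, so $D$ is monotone increasing and $\tilde R(1)(D(t))=D(t+1)>D(t)$, confirming that $\tilde R(1)\in\tsl_+$ is precisely the positive generator of the holonomy in the sense of Definition \ref{specg}. Bijectivity of $\Xi$ and well-definedness on gauge classes then follow from (a)--(c); the naturality of the construction ensures that a reparametrization of $S^1$ replaces the path $R$ by a conjugate one in $\SL(2,\R)$, and the corresponding lifts conjugate in $\tsl$.

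Continuity of $\Xi$ in the forward direction is immediate from standard continuous dependence of ODE solutions on their coefficients: $F\mapsto R(1)$ is continuous into $\SL(2,\R)$, path-lifting to $\tsl$ along the canonical homotopy $t\mapsto R(t)$ is continuous, and the quotient projection onto conjugacy classes is continuous. The main obstacle is continuity of $\Xi^{-1}$, which needs to be proved despite both source and target carrying non-trivial quotient topologies. The plan is to exhibit continuous local sections of $\Xi$ stratum by stratum, using the normal forms of Theorem \ref{conjtsl}. Concretely, each conjugacy class in $\tsl_+$ is indexed by $\lambda\in(0,1)$, $\alpha\in\R_+\sm\pi\Z$, or by a winding integer $k\in\N$, and for each one produces an explicit smooth $F$ depending continuously on the parameter that realizes it: a positive constant $F\equiv\alpha^2$ realizes the elliptic and central strata; a negative constant realizes $\tilde\Hyp_0(\lambda)$; and higher winding $k$ is built in by concatenating with a bump producing the additional half-turns via $\tilde\ell_{k\pi}$. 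Compatibility across the parabolic boundaries is automatic from continuity of the ODE flow in $F$, so these sections assemble into a global continuous inverse, completing the proof.
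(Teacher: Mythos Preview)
Your argument follows essentially the same route as the paper's: both realize $\Xi$ as the composition of the bijections from Theorem \ref{main}, Theorem \ref{clas-g} and Proposition \ref{dev-sl+}/Corollary \ref{moduli}, identify the positive holonomy generator with $\tilde R(1)$ via the uniqueness argument $R(t+1)=R(1)R(t)$ together with positivity of the Wronskian, and deduce forward continuity from continuous dependence of ODE solutions on their coefficients.

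The one substantive divergence is your final paragraph: you attempt to prove continuity of $\Xi^{-1}$, but the statement only asserts that $\Xi$ is bijective and continuous, and the paper's proof makes no claim about the inverse. Your stratum-by-stratum section argument is correspondingly sketchy---in particular the ``concatenating with a bump producing the additional half-turns'' step and the asserted ``compatibility across the parabolic boundaries'' would require real work to make precise---but since this is not part of the theorem you can simply drop that paragraph.
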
 
\begin{proof}
  The system 
  \be\label{R'} \begin{cases}
      R'=RQ\\ R(0)=I_2
  \end{cases}\qquad\mbox{ where } Q:=\begin{pmatrix}0& -F\\ 1&0\end{pmatrix},\ee
  is the matrix version of the same first order linear ODE associated to Hill's equation $x''+Fx=0$, i.e. for a solution $x(t)$ of Hill's equation, the (row) vector-valued map $X(t):=(x(t)\ x'(t))\in\R^2$ (seen as row matrices) satisfies
  $X'=XQ$, and conversely, the second entry of a solution to $X'=XQ$ is the derivative of the first entry $x$, which satisfies the Hill's equation. So the first column of $R$ consists of two solutions $x_1, x_2$ of $x''+Fx=0$, and the second column of their derivatives, thus the determinant of $R$ is equal to the {\em Wronskian} $Wr(x_2,x_1):=x_2'x_1-x'_1x_2$ which is constant, thus equal to 1 (the value of $\det R(0)$). Thus $R:\R\ra \SL(2,\R)$ and it admits a well-defined lift to $\tsl$, denoted with $\tl R$. From the standard theory of ODE's, the map 
$$F\mapsto \tl R(1)\in\tsl$$
is well-defined and continuous. Clearly, the first column \be\label{psi} \begin{pmatrix}x_1\\ x_2\end{pmatrix}=R\begin{pmatrix}1\\ 0\end{pmatrix}\ee
of $R$ determines a map $\psi:\R=\tl{S^1}\ra\rp^1$ such that $\psi(0)=[1:0]$, that lifts to a map $\tl\psi:\R\ra\tp$, such that $\tl\psi(0)=0$.  The fact that the Wronskian $Wr(x_2,x_1)$ is positive implies that the map $\tl\psi$ has positive derivative.  In particular, $\tl\psi(1)>\tl\psi(0)=0$. The map $\tl\psi:\R\ra\tp$ is thus not only an immersion, but a monotone increasing one, and this is a developing map for the associated projective structure, cf. Theoerm \ref{main} and Remark \ref{iso-lap-proj}. 

We claim that the holonomy corresponding to the developing map $\tl\psi$ is generated by $\tl R(1)$: indeed, we need to prove that $\tl\psi(t+1)=\tl R(1)(\tl\psi(t))$ or, equivalently, that $\psi(t+1)=\hat R(1)\psi(t)$ for all $t\in\R$. This follows from the fact that $t\mapsto R(1)R(t)$ and $t\mapsto R(t+1)$ satisfy the same equation $R'=RQ$ with the same initial condition at $t=0$.

From \eqref{psi} we have that $\tl\psi(1)=\tl R(1) (0)$ (the element $\tl R(1)$ of $\tsl$ applied to $0\in\tp$), thus $\tl R(1)\in\tsl_+$. Consequently, we get a continuous map 
\be\label{Xi}\Xi:\{ F:\R\ra\R\ ,\ F \mbox{ smooth and }F(t+1)=F(t),\ \forall t\in\R\}\longrightarrow \tsl_+\ee
which is equivariant for the action of the diffeomorphism group of $\R/\Z$ on the left side, resp. conjugation in $\tsl$ on the right side, cf. Theorem \ref{main}.

Note that $\Xi$ is a composition of the correspondence between Laplace and projective structures from Theorem \ref{main} with the one from Theorem \ref{clas-g} (which associates to a projective structure an equivalence class of a pair $(U,\tl A)$)  and with the one from Proposition \ref{dev-sl+} (which drops the interval $U$ that turns out to be determined modulo $\pi\Z$ by $\tl A$). Because we know from that the correspondences above are  all 1--1, we conclude that $\Xi$ is bijective as well.
\end{proof}

\subsection{Geometric properties of closed projective curves}
We will describe some of the geometric properties of a closed projective curve that allow us to distinguish various isomorphism classes. The first property is whether the developing map is surjective or not. Recall that by Corollary \ref{moduli}, a projective curve is identified to a conjugacy class of the adjoint action of $\tsl$ on $\tsl_+$, and those corresponding to non-surjective developing maps are $\tl\Para_0^-\cup\tl\Hyp_0(\l)$, $\l\in(0,1)$.
\begin{prop}\label{afin}
The curves from $\tl\Para_0^-\cup\tl\Hyp_0(\l)$, $\l\in(0,1)$, are the projective curves that are
also {\em affine}, i.e. one can extract from the atlas of projective charts a smaller atlas of affine charts or, equivalently, it admits a global connection or which the corresponding Laplace operator is the Hessian. The distinction between $\tl\Para_0^-$ and $\tl\Hyp_0(\l)$ is that, in the first case, the above mentioned connection is unique and admits a global parallel section, and in the second case there are two such connections and none admits a parallel section. \end{prop}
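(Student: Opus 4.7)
The plan is to reformulate affineness in terms of the developing pair $(U,\tl A)$ from Corollary \ref{moduli} and Proposition \ref{dev-sl+}, and then run the case analysis against the enumeration \eqref{slplus} of $\tsl_+$.

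The key observation will be that an affine refinement of the projective atlas corresponds to a choice of ``point at infinity'' $[\infty]\in\rp^1$ such that $\hat A:=\Phi(\tl A)$ fixes $[\infty]$ and the developing image $U$ is contained in a single connected component of $\tp\setminus \pj^{-1}([\infty])$ (a maximal affine interval of length $\pi$). The equivalent characterization via a global connection $\nb$ on $TC$ with $\LL=\Hss^{\nb}$ then follows from Theorem \ref{main}(a): on an affine chart, any affine parametrization is automatically projective, and declaring its velocity to be $\nb$-parallel gives a local connection that glues globally precisely because affine transitions preserve parallel vector fields; conversely, the parallel vector fields of such a $\nb$ furnish an affine subatlas.

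For the case analysis: if $\tl A$ has no fixed points on $\tp$, then $U=\tp$ by Proposition \ref{dev-sl+} and no affine refinement is possible, which by Lemma \ref{tl a0} combined with \eqref{slplus} excludes every class in $\tsl_+$ except $\tl\Para_0^-$ and $\tl\Hyp_0(\l)$, $\l\in(0,1)$. For $\tl\Para_0^-$ the fixed points of $\tl P_0^-$ are precisely $\pi\Z=\pj^{-1}([1:0])$ and $U=(k\pi,(k+1)\pi)$ is itself a maximal affine interval; since $[1:0]$ is the only fixed point of $\hat P^-$ on $\rp^1$, the affine structure is unique. For $\tl\Hyp_0(\l)$ the fixed points of $\tl H_0(\l)$ are $(\pi/2)\Z=\pj^{-1}([1:0])\cup\pj^{-1}([0:1])$, the interval $U=(k\pi,k\pi+\pi/2)$ fits inside both the maximal affine interval associated to $[\infty]=[1:0]$ and the one associated to $[\infty]=[0:1]$, and both points are fixed by $\hat H(\l)$, yielding two distinct affine structures and hence two distinct connections $\nb_1,\nb_2$.

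Finally, to read off the parallel section I will work in an affine coordinate $t$ on the universal cover: the parallel vector fields for the associated connection are the constant multiples of $\d_t$, and such a section descends to $C$ iff the linear part of the affine holonomy is $1$. For $\tl\Para_0^-$ the holonomy is a non-trivial translation whose linear part is trivial, so $\d_t$ descends to a nowhere-vanishing parallel section of $TC$. For $\tl\Hyp_0(\l)$ the holonomy in either affine chart is the scaling $t\mapsto\l^{\pm 2}t$ with $\l^{\pm 2}\neq 1$, which has no non-zero fixed vector, so no parallel section survives. The only delicate point will be to verify which canonical lifts actually lie in $\tsl_+$ with fixed points on $\tp$; in particular, $\tl P_0^+$ is excluded from $\tsl_+$ (since $\tl P_0^+(x)\leq x$ for all $x$) while $\tl P_0^-$ is retained.
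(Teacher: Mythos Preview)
Your proposal is correct and follows essentially the same route as the paper: both arguments pass through the equivalence ``affine subatlas $\Leftrightarrow$ global connection $\nb$ with $\LL=\Hss^\nb$ $\Leftrightarrow$ developing image $U$ sits in a single affine lift of $\rp^1\setminus\{[\infty]\}$ with $[\infty]$ fixed by $\hat A$'', then run the case split on $\tsl_+$ to isolate $\tl\Para_0^-$ and $\tl\Hyp_0(\l)$, and finish by reading off the affine holonomy (translation vs.\ scaling) to decide existence of a global parallel section. One small citation slip: the fact that $U=\tp$ when $\tl A$ has no fixed points is not the content of Proposition~\ref{dev-sl+} itself but the remark immediately preceding it; otherwise your argument matches the paper's.
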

\begin{proof} First note that if there exists $\nb$ on $C$ such that $\LL=\Hss^\nb$ on $C$, then every {\em local} $\nb$-parallel nontrivial section of $TC$ induces a {\em local} parametrization that is projective, so there exists a sub-atlas formed by those parametrizations whose speed vector field is $\nb$-parallel, see Theorem \ref{main}. The transition maps are affine, because their second derivative vanishes (which of course implies the vanishing of the Schwarzian). These parametrizations can be globally defined on $\tl C$ which is therefore sent through the developing map to an interval that projects diffeomorphically through $\pj$ onto its image in an affine chart of $\rp^1$. This implies that the projective structure is of type $\tl\Para_0^-$ or $\tl\Hyp_0(\l)$, $\l\in (0,1)$.

For $\tl A=\tl P_0^-$ we get that $(0,\pi)/\lg\tl P_0^-\rg$ is isomorphic with $\R/\Z$, as the element $\tl P_0^-$ induces on $\{[x:1]\ ,\ x\in\R\}\simeq\R$ the transformation $[t:1]\mapsto [t+1:1]$. For $\tl A=\tl H_0(\l)$, its action on $(0,\pi/2)$ induces the transformation 
$[x:1]\mapsto [\l^2 x:1]$, for $x>0$ (the image $\pj((0,\pi/2))=\{[x:1]\ ,\ x\in (0,\infty)\}$).

Note that for $\tl A=\tl P_0^-$, the image of $D$ is contained in a single affine chart in $\rp^1$, hence the uniqueness of the affine connection $\nb$, which is (after identifying $\tl C$ with $\R$, the trivial connection for which the derivative of the identity map (which is the affine parametrization) is the $\nb$-parallel (constant) vector field $1$. As noticed before, $\tl P_0^-$ acts on $\tl C\simeq\R$ by translations.

For $\tl A=\tl H_0(\l)$, the developing map has values in $(0,\pi/2)$ and its projection through $\pj$ on $\rp^1$ is contained in an infinity of affine charts, however only for two of them does $\tl A$ act by affine transformations: on those affine charts which contain at most (in fact, exactly) one of the two fixed points of $\hat A\in\PSL(2,\R)$ on $\rp^1$, namely on $\{[1:x]\ ,\ x\in\R\}\supset \pj(D(\tl C))=\{[1:x]\ ,\ x>0\}$ or on $\{[x:1]\ ,\ x\in\R\}\supset \pj(D(\tl C))=\{[x:1]\ ,\ x>0\}$. On each of these charts the holonomy is generated by $x\mapsto \l^2x$, resp $x\mapsto \l^{-2}x$, so the $\nb$-parallel (constant) functions/vector fields are not $\tl A$-invariant, hence not defined globally on $C$.
\end{proof}
The next invariant of an isomorphism class of closed projective curves is the set of winding numbers of the fundamental domains of $U/\lg\tl A\rg$:
\begin{prop}\label{windnumbers}
If $I:=[a,b)$ is a fundamental domain of $\tp/\langle\tl
A\rangle$ (thus $b=\tl A(a)$), then the winding number of $I$ depends
on $\tl A$ as follows:
\bi
\item If $\tl A$ is conjugated to $\tl E_\a$, $\a>0$, then $W(I)=n$ if
  and only if $\alpha=n\pi$, otherwise $W(I)=n+\frac12$, where $n\pi<\alpha<(n+1)\pi$;
\item If $\tl A$ is conjugated to $\tl P^+_n$ or $\tl
  P^-_n$, for $n\in\N^*$, then $n+\frac12\ge W(I)\ge n$, respectively
  $n-\frac12\le W(I)\le n$;
\item If $\tl A$ is conjugated to $\tl H_n(\lambda)$, $\l>1$, $n\in\N^*$,
  then $n-\frac12\le W(I)\le n+\frac12$.\ei
In cases {\rm (2)} and {\rm (3)}, the winding number of $(a,b)$ is
integer if and only if the projection $\pj(a)\in\rp^1$ is a fixed
point of the projection $\hat A\in \PSL(2,\R)$ of $\tl A$. Moreover, the
limits in the inequalities above are attained.
\end{prop}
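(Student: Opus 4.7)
The plan is to reduce to the standard representatives of each conjugacy class given in Theorem~\ref{conjtsl}, and then analyse the displacement function $\delta(a):=\tl A(a)-a$ directly. This reduction is legitimate because if $\tl A'=\tl B\tl A\tl B^{-1}$ and $I=[a,\tl A(a))$ is a fundamental domain for $\tl A$, then $\tl B(I)=[\tl B(a),\tl A'(\tl B(a)))$ is a fundamental domain for $\tl A'$, and $\tl B\in\tsl$ preserves the winding number of any bounded interval (Proposition~\ref{open}). Moreover, $W([a,b))$ is determined by $b-a$ via Definition~\ref{windo}, so the whole proof amounts to a case-by-case study of the image of $a\mapsto\delta(a)$.

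In the elliptic case $\tl A=\tl\ell_\alpha$ the displacement is constant, $\delta(a)\equiv\alpha$, and item~(1) follows directly from Definition~\ref{windo}.

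In the hyperbolic and parabolic cases I factor the representative as $\tl A=\tl A_0\,\tl\ell_{n\pi}$ where $\tl A_0\in\{\tl H_0(\lambda),\tl P_0^{\pm}\}$ is the canonical lift of Remark~\ref{a0}. Since $\tl\ell_{n\pi}$ is central,
\[\delta(a)=\delta_0(a)+n\pi,\qquad \delta_0(a):=\tl A_0(a)-a.\]
By Lemma~\ref{tl a0}, $\delta_0$ vanishes exactly on $\pj^{-1}(\mathrm{Fix}(\hat A))$. This zero set decomposes $\tp$ into open components, each of which is mapped bijectively onto itself by $\tl A_0$ (whose fixed-point set is $\pj^{-1}(\mathrm{Fix}(\hat A))$). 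In the hyperbolic case these components are the intervals of length $\pi/2$ between consecutive points of $(\pi/2)\Z$, hence $|\delta_0|<\pi/2$ automatically; in the parabolic case they are the intervals $(k\pi,(k+1)\pi)$ of length $\pi$, so $|\delta_0|<\pi$. The sign of $\delta_0$ is constant on each component, globally positive for $\tl P_0^{-}$ and globally negative for $\tl P_0^{+}$ (as recorded right before \eqref{slplus}), and alternating between adjacent components for $\tl H_0(\lambda)$ (read off from the affine-chart action $y\mapsto\lambda^{\pm 2}y$ with $y=\cot t$).

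Combining these facts with Definition~\ref{windo} produces all the stated inclusions: for $\tl H_n(\lambda)$ one gets $\delta(a)\in((n-\tfrac12)\pi,(n+\tfrac12)\pi)$, so $W(I)\in\{n-\tfrac12,\,n,\,n+\tfrac12\}$; for $\tl P_n^{-}$ one gets $\delta(a)\in[n\pi,(n+1)\pi)$ (and symmetrically for $\tl P_n^{+}$), giving the one-sided bound. In all non-elliptic cases, $\delta(a)=n\pi$ if and only if $\delta_0(a)=0$, equivalently $\pj(a)\in\mathrm{Fix}(\hat A)$, which is exactly the integer-winding-number characterization. The attainment of the extremal (half-integer) values is immediate because $\delta_0\not\equiv 0$ on any component, so $\delta$ takes non-$n\pi$ values there of the appropriate sign. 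I do not expect a serious obstacle: the only subtle point is correctly tracking the sign of $\delta_0$ on each component through the affine-chart computation, which is what pins down whether the admissible winding numbers lie on the left or the right of $n$ in the parabolic case.
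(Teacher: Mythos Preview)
Your proposal is correct and follows essentially the same route as the paper's proof: reduce to the standard representatives via conjugation (invoking Proposition~\ref{open} for winding-number invariance), write $\tl A=\tl A_0\tl\ell_{n\pi}$ so that the displacement splits as $\delta=\delta_0+n\pi$, and then read off the range of $\delta_0$ from the fact that $\tl A_0$ preserves each interval between consecutive fixed points. Your bound $|\delta_0|<\pi/2$ in the hyperbolic case is in fact sharper than the $(-\pi,\pi)$ the paper records, but both yield the same set $\{n-\tfrac12,n,n+\tfrac12\}$ of winding numbers; otherwise the arguments coincide, including the sign analysis for $\tl P_0^\pm$ and the identification of the integer case with $\pj(a)\in\mathrm{Fix}(\hat A)$.
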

\begin{proof} As the claims are invariant to conjugation, we will assume $\tl A\in\tsl_+$ is one of the representatives of the elliptic, parabolic or hyperbolic conjugacy classes.

Note that if $W(a,\tl A(a)):=W([a,\tilde A(a)))$ is an integer, then $\pj(a)=\pj(\tilde A(a))$ is a
  fixed point for $\hat A$ on $\rp^1$. 
  
In case (1), $\tl A$ is the translation with $\a$, hence $W(x,x+\a)=n$ if and only if $\a=n\pi$, $n\in\N^*$, and $W(x,x+\a)=n+\tfrac12$ if $\a\in (n\pi,(n+1)\pi)$, as claimed.

In case (2), $\tl A(x)=\tl P_n\pm=\tl P_0^\pm(x)+n\pi$, $n\in\N^*$ or $\tl A=\tl P^-_0$. Clearly, $\tl P_0^+(x)\le x$ for all $x\in\tp$, with equality for $x\in\pi\Z$, and $\tl P_0^-(x)\ge x$ for all $x\in\tp$, with equality for $x\in\pi\Z$. In both cases $\tl P_0^\pm(x)-x\in (-\pi,\pi)$ (because all intervals of the form $(k\pi,(k+1)\pi)$ are mapped to themselves), hence $W(x,\tl P_0^\pm(x)+n\pi)=n\mp \tfrac12$ if $x\in\tp\sm\pi\Z$ and $W(x,\tl P_0^\pm(x)+n\pi)=n$ if $x\in\pi\Z$, as claimed. 

We argue similarly in case (3): $\tl A(x)=\tl H_0(\l)(x)+n\pi$, and the difference $\eta(x):=\tl H_0(\l)(x)-x\in (-\pi,\pi)$, vanishes exactly for $x\in\tfrac\pi2\Z\subset \tp$. For $k\in\Z$ and  $x\in (k\pi,(k+\tfrac12)\pi)$, $\eta(x)>0$ and for $x\in ((k-\tfrac12)\pi,k\pi)$, $\eta(x)<0$. The corresponding winding numbers $W(x,\tl A(x))$ for these values of $x$ are therefore $n$, $n+\tfrac12$, resp. $n-\tfrac12$.
\end{proof}

\begin{ede}\label{windc}
 The {\em winding number} $W(C)$ of a closed curve $C\simeq
  \tp/\langle\tl A\rangle$ is $n\in\N^*$ if there exist $a\ne
  b:=\tl A(a)\in\tp$ such that $W([a,b))=n$. Otherwise
  $W(C):=W(I)\in\frac12\N^*\sm\N^*$ for any fundamental domain $I$ of
  $C$ as in the proposition above.\end{ede} 
\begin{remark}\label{explain} In \cite{ku}, and also in \cite{hit}, the authors did not distinguish between the classes $\tl\para_n^+$ and $\tl\para_n^-$ for the same $n\in\N^*$. Proposition \ref{windnumbers} produces a geometric invariant (see also the table below) that distinguishes these classes. \end{remark}

Another invariant of an isomorphism class of closed projective curves is its group of global projective
automorphisms:
\begin{prop}\label{autom} The orientation-preserving automorphism
  group $\Aut^+(C)$ of an oriented closed projective curve $C\simeq \tp/\langle
  \tilde A\rangle$ is $3$--dimensional (equal to $\tsl/\langle
  \tilde E_{n\pi}\rangle$) if and only if the curve is of class
  $\Ell_{n\pi}$, $n\in\N^*$. Otherwise, $\Aut(C)$ is a
    1--dimensional Lie group and:
\bi\item $\Aut^+(C)$ is connected and compact (i.e., isomorphic to $S^1$)
if and only if 
$$C\in\tl\Ell^0\cup\tl\Hyp_0\cup\tl\Para^-_0;$$ 
\item $\Aut^+(C)$ is non-connected and non-compact if and only if
$$C\in(\tl\Hyp\sm\tl\Hyp_0)\cup(\tl\Para\sm\tl\Para_0^-).$$\ei
In the first case, $C$ is a {\em homogeneous} projective
curve. In the second case, $C$ is non-homogeneous and $\Aut_0(C)$, the
connected component of the identity in $\Aut^+(C)$, has $n=W(C)$ fixed points 
if $C\in\Para^\pm_n$, respectively $2n=2W(C)$ fixed points if
$C\in\Hyp_n(\lambda)$. In these cases, the index of $\Aut_0(C)$ in $\Aut^+(C)$ is
$W(C)$.
\end{prop}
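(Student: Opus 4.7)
The plan is to realize $\Aut^+(C)$ as a quotient $Z_U(\tl A)/\langle\tl A\rangle$, where $\tl A$ is the positive holonomy generator, $U$ is the developing image (so $U=\tp$ when $\tl A$ has no fixed points, and $U$ is the maximal interval on which $\tl A(x)>x$ in the exceptional classes $\tl\Hyp_0\cup\tl\Para_0^-$), and $Z_U(\tl A)$ denotes the subgroup of the centralizer $Z_{\tsl}(\tl A)$ preserving $U$. The argument: any orientation-preserving projective automorphism of $C$ lifts to a projective self-map of $U$ which, being locally homographic on a connected open set, extends uniquely by analytic continuation to some $\tl B\in\tsl$ preserving $U$ (invoking Theorem \ref{clas-g}); the descent condition $\tl B\tl A\tl B^{-1}\in\langle\tl A\rangle$ reduces to $\tl B\tl A\tl B^{-1}=\tl A$ because conjugation in the connected group $\tsl$ is continuous while $\langle\tl A\rangle$ is discrete. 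Conversely every such $\tl B$ descends.

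The second step is to compute $Z(\tl A)$ for each representative from Theorem \ref{conjtsl}. For central $\tl A=\tl\ell_{n\pi}$, $Z(\tl A)=\tsl$, and $\tsl/\langle\tl\ell_{n\pi}\rangle$ is three-dimensional. For a properly elliptic $\tl A=\tl\ell_\a$, $Z(\tl A)=\{\tl\ell_\b:\b\in\R\}\cong\R$, so $\Aut^+(C)\cong\R/\a\Z\cong S^1$. For hyperbolic $\tl A=\tl H_k(\l)$ and parabolic $\tl A=\tl P_k^\pm$, the centralizer in $\PSL(2,\R)$ of the projection is a one-parameter subgroup (the diagonal subgroup, respectively the unipotent upper-triangular subgroup), so $Z(\tl A)\cong\R\times\Z$, generated respectively by $\{\tl H_0(\mu):\mu>0\}$ or $\{\tl P_0(x):x\in\R\}$ together with the central generator $\tl\ell_\pi$.

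Next I would pass to the quotient. In the fixed-point cases $\tl A\in\tl\Hyp_0\cup\tl\Para_0^-$, the interval $U$ lies strictly between two consecutive lifts in $\tp$ of a fixed point of $\hat A\in\PSL(2,\R)$; the central translates $\tl\ell_{m\pi}$ with $m\ne 0$ shift $U$ to a neighbouring such interval, so only the continuous factor $\R$ preserves $U$, and $\R/\langle\tl A\rangle\cong S^1$ is connected and compact. In the remaining non-central classes $\tl A=\tl A_0\tl\ell_{k\pi}$ has no fixed point on $\tp$, $U=\tp$, and the whole $\R\times\Z$ descends; the generator corresponds to $(\log\l,k)$ (hyperbolic) or $(\pm1,k)$ (parabolic) with $k\ne 0$, so $\langle\tl A\rangle\cap(\R\times\{0\})=\{0\}$. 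Hence $\Aut_0(C)\cong\R$ is non-compact, and its index in $\Aut^+(C)$ equals the order of the image of $\langle\tl A\rangle$ in $\Z$, namely $|k|=W(C)$ by Proposition \ref{windnumbers}.

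Finally, the fixed points of $\Aut_0(C)$ on $C$ are the images of the fixed points on $\tp$ of the ambient one-parameter subgroup: $\{\tl H_0(\mu)\}$ fixes $\tfrac{\pi}{2}\Z$, and $\{\tl P_0(x)\}$ fixes $\pi\Z$. The generator $\tl A=\tl A_0\tl\ell_{k\pi}$ acts on these discrete sets as translation by $k\pi$, giving $2|k|$ orbits in the hyperbolic case and $|k|$ in the parabolic one, matching $2W(C)$ and $W(C)$; in all other cases the fixed set is empty, so $\Aut_0(C)$ acts transitively and $C$ is homogeneous. The delicate point in this plan is the justification that, in the fixed-point cases, the central translates are genuinely excluded from $Z_U(\tl A)$; this reduces to the elementary but crucial observation that consecutive lifts of a single $\rp^1$-fixed point are separated by $\pi$, while the interval $U$ is contained in a single maximal open arc between such lifts.
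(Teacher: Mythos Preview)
Your overall strategy coincides with the paper's: both identify $\Aut^+(C)$ with the quotient by $\langle\tl A\rangle$ of the normalizer of $\langle\tl A\rangle$ inside the stabilizer of the developing image $U$, then compute case by case using the standard representatives of Theorem \ref{conjtsl}. The descriptions of the centralizers as $\R\times\Z$ in the hyperbolic and parabolic cases, the exclusion of the central translates $\tl\ell_{m\pi}$ in the fixed-point classes $\tl\Hyp_0\cup\tl\Para_0^-$, and the counting of resonance points and of the index all match the paper.

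There is, however, a genuine gap: your reduction from normalizer to centralizer is not justified by the continuity argument you give. Connectedness of $\tsl$ only tells you that the conjugacy class of $\tl A$ is connected; it does not prevent that class from meeting the discrete set $\langle\tl A\rangle$ at a second point $\tl A^{-1}$. Equivalently, the normalizer $N$ need not be connected, so the locally constant map $N\to\langle\tl A\rangle$, $\tl B\mapsto\tl B\tl A\tl B^{-1}$, can take two values. And indeed it does for $\tl A=\tl H_0(\l)$: conjugation by $\tl\ell_{\pi/2}$ sends $\tl H_0(\l)$ to $\tl H_0(\l^{-1})=\tl H_0(\l)^{-1}$.

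The paper closes this gap differently. Since $\langle\tl A\rangle\cong\Z$ has automorphism group $\{\pm1\}$, the condition $\tl B\tl A\tl B^{-1}\in\langle\tl A\rangle$ forces $\tl B\tl A\tl B^{-1}=\tl A^{\pm1}$; one then reads off from Theorem \ref{conjtsl} that $\tl A$ and $\tl A^{-1}$ lie in distinct conjugacy classes for every standard representative in $\tsl_+$ except $\tl H_0(\l)$. In that one exceptional case the conjugating element $\tl\ell_{\pi/2}$ fails to preserve $U=(0,\tfrac\pi2)$, so your $Z_U$ framework does ultimately exclude it --- but you still need the conjugacy-class check to handle all the cases with $U=\tp$.
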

\begin{proof} In general, $\Aut^+(C)$ is equal to $N(\langle\tl A\rangle)/\langle\tl A\rangle$, where $N(\langle\tl A\rangle)$
denotes the normalizer of the discrete
  group $\langle\tl A\rangle$ in $\Aut (\tl C)$ (this latter group being equal to $\tsl$ if $\tl C\simeq \tp$, and an affine group otherwise). Moreover, $\mathrm{Aut}_0(C)$ 
is equal to the connected component of the identity in the centralizer of
$\tilde A$. Because the group $\lg\tl A\rg$ is infinite cyclic, an element $\tl B$ lies in its normalizer if an only if $B$ is in its centralizer or $\tl B\tl A\tl B^{-1}=\tl A^{-1}$. For $\tl A\in\tsl_+$ a standard representative of its conjugacy class, the latter is only possible if $\tl A=\tl H_0(\l)$.

If $\tl A\in\tl{\mathcal{Z}}\cap\tsl_+$ (i.e., $\tl A=\tl\ell_{n\pi}$, $n\in\N^*$), then the whole $\tsl$ is the centralizer of $\tl A$, and therefore $\Aut^+(\tp/\lg\tl A\rg)=\tsl/\lg\tl\ell_{n\pi}\rg$, and it is connected.

If $\tl A=\ell_\a$, $\a\in\R\sm\pi\Z$, then $\bar\pj(\tl A)=A=\ell_\a$, $\a\in (0,\pi)\cup (-\pi,0)$, has complex, non-real eigenvectors that are shared by any matrix $B\in\SL(2,\R)$ that commutes with $A$, i.e., $B=\ell_\b$, With $\b\in(-\pi,\pi]$. The centralizer of $\tl A$ is then $\{\tl\ell_\b\ ,\ \b\in\R\}$. Moreover, $\tl \ell_\a$, $\a>0$, is not conjugated in $\tsl$ with $\tl\ell_{-\a}$, thus the normalizer of $\tl\ell_\a$ is the same as its centralizer. We conclude $\Aut^+(\tp/\lg\tl\ell_\a\rg)=\{\tl\ell_\b\ ,\ \b\in\R\}/\lg\tl\ell_\a\rg\simeq\R/\a\Z\simeq S^1$, for all $\a\in\R\sm\pi\Z$.

For $\tl A=\tl P_n^\pm$, $n\in\N$, then again the normalizer of $\lg\tl A\rg$ and its centralizer coincides, as $\tl A$ is not conjugated in $\tsl$ with $\tl A^{-1}$. But $\hat A:=\pj(\tl A)\in\PSL(2,\R)$ has a unique fixed point $[1:0]\in\rp^1$, hence every element $\hat B$ of $\PSL(2,\R)$ that commutes with $\hat A$ must fix this point as well (and no other, unless $\hat B=\id$). This means that $\hat B=\hat P(t)$, $t\in\R$, where the parabolic matrix $P(t)$ is defined in \eqref{hypara}. The canonical lift $\tl P_0(t)$ commutes with $\tl A_0$, the canonical lift of $\hat A$, and because $\tl A=\tl A_n=\tl A_0\ell_{n\pi}$ and $\tl B_m=\tl B_0\ell_{m\pi}$ also commute, we conclude 
$$\Aut^+(\tp/\lg\tl P_n^\pm\rg)=\{\tl P_m(t)\ ,\ m\in\Z,\ t\in\R\}/\lg\tl P_n(\pm1)\rg,$$
which is a semidirect product of $\Aut_0(\tp/\lg\tl P_n^\pm\rg)\simeq\R$ with $\Z/n\Z$. $\Aut_0\tp/\lg\tl P_n^\pm\rg)$ consists of the canonical lifts $\tl P_0(t)$, $t\in\R$, hence all its elements fix all fixed points of $\tl A_0$. There are exactly $n$ such points in $C=\tp/\lg\tl P_n^\pm\rg$.

For $\tl A=\tl\H_n(\l)$, $n\in\N^*$, $\tl A$ is not conjugated in $\tsl$ to $\tl A^{-1}$ hence we again need to determine the centralizer of $\tl A$. As in the elliptic case, if $\tl B$ commutes with $\tl A$, then $B:=\bar\pj(\tl B)\in\SL(2,\R)$ commutes with $A:=\bar\pj(\tl A)$ which has two real eigenvectors $(1,0)$ and $(0,1)$ in $\R^2$, and this means that $B$ has the same real eigenvectors, hence $B=H(\mu)$, $\mu\ne 0$. As in the parabolic case, the canonical lift of $B$ to $\tsl$ commutes with $\tl A_0$ and in fact all lifts $\tl B_n=\tl B_0\tl\ell_{m\pi}$, $\m\in\Z$, of $B$ commute with $\tl A$. We conclude
$$\Aut^+(\tp/\lg\tl H_n(\l)\rg)=\{\tl H_m(\mu)\ ,\ m\in\Z,\ \mu>0\}/\lg\tl H_n(\l)\rg,$$
which is again a semidirect product of $\Aut_0(\tp/\lg\tl H_n(\l)\rg)\simeq (0,\infty)$ with $\Z/n\Z$. 

As $\Aut_0(\tp/\lg\tl H_n(\l)\rg)$ consists of $\tl H_0(\mu)$, $\mu>0$, which all fix the set $\tfrac\pi2\Z$ of fixed points of $\tl A_0$ in $\tp$, we have that the set of $2n$ points on $C=\tp/\lg\tl H_n(\l)\rg$ coming from $\tfrac\pi2\Z$ are fixed by all elements of $\Aut_0(\tp/\lg\tl H_n(\l)\rg)$. 

Finally, for $\tl A=\tl P_0^-$ or $\tl H_0(\l)$, $\Aut^+(C)$ is a quotient by $\lg\tl A\rg$ of the automorphism group of the image of the developing map, which is $(0,\pi)$ in the first case and $(0,\pi/2)$ in the second case. Now, $\Aut^+(0,\pi)=\{\tl P_0(t)\ ,\ t\in\R\}\simeq \R$ hence $\Aut^+(C)\simeq \R/\Z\simeq S^1$ and, in the second case, $\Aut^+(0,\pi/2)=\{\tl\hyp_0(\mu)\ ,\ \mu>0\}\simeq (0,\infty)$, thus 
$\Aut^+(C)\simeq S^1$ as well. In both cases the action of the (connected) group of oriented projective automorphisms is free.
\end{proof}
\begin{ede}\label{reson} A point on a closed projective curve $c$ which is fixed by all elements of $\Aut_0(C)$ is called a {\em resonance point} of $C$.\end{ede}
In the classes $\tl \Para_n^\pm$ or $\tl \Hyp_n(\l)$, $n\in \N^*$, there are $n$, resp. $2n$ resonance points, moreover the set of these points is preserved by all projective automorphisms (preserving the orientation or not, see below), hence the action of the full automorphism group is not transitive; the projective curve is not {\em globally} homogeneous.

Recall, however, that all projective curves are {\em locally} homogeneous manifolds, there is no way to distinguish two points of $C$ by looking at a neighborhood of each point; the existence, on some projective curves $C$, of points that are fixed (or simply permuted) by all projective automorphisms seems to be rather a {\em resonance} phenomenon, hence the use of this term in Definition \ref{reson}.
\begin{remark} 
The infinitesimal automorphisms  of a non-homogeneous projective curve are {\em projective vector fields} (in fact, there is only one up to scale) that vanish at the finite set of resonance points. This vanishing is at order 1 for the classes $\tl\Hyp_n(\l)$, $n\in\N^*$, and at order 2 for the classes $\tl\Para_n^\pm$, $n\in\N^*$. On the projective classes $\tl\Para_0^-$, $\tl\Hyp_0(\l)$ and $\tl\Ell^0$ there is again only one projective vector field (up to scale), and it is nowhere vanishing. On a curve of class $\tl\Ell_{n\pi}$, i.e. on the $n$-fold cover of $\rp^1$, there are projective vector fields of all the kinds described above (if such a vector field vanishes, then it either vanishes at order 2 at $n$ points, or it vanishes at order 1 at $2n$ points), but the zeros of such a vector field $V$ vary depending on $V$, cf. \cite{hit}. \end{remark}

\begin{remark}\label{Aut}
As we have seen in Proposition \ref{orrev}, all closed curves admit orientation-reversing automorphisms, so $\Aut(C)$ is a semidirect product of $\Aut^+(C)$ with $\Z/2\Z$.
\end{remark}

We see thus that all homogeneous projective curves (i.e, the classes
$\tl\Ell\cup\tl\Hyp_0\cup\tl\Para^-_0$) have connected orientation-preserving
automorphism group (isomorphic to $S^1$ except for the classes in
$\tl\Ell_{n\pi},\ n\in\N^*$, where we get a $n$-fold Galois covering of
$\PSL_2(\R)$). 

We summarize in the following table the geometric properties of each projective type of a given curve $C$: 
\begin{itemize}[nosep]
\item the winding number $W(\tilde C)$ of its universal cover $\tilde C\subset \tp$;
\item the winding number $W(C)$ of $C$; 
\item the winding number of the fundamental domains of $C$ (see Proposition \ref{windnumbers})
\item the connected component of the identity in the automorphism group of $C$; 
\item the number of resonance points when $C$ is non-homogeneous / the compatible global affine connections (when they exist); 
\item (up to conjugation and sign) the projection in $\SL(2,\R)$ of the positive generator of the holonomy of $C$.
\end{itemize}

\begin{center}
\begin{adjustbox}{max width=\textwidth}
\begin{tabular}{|l|c|c|c|c|c|c|}
\multicolumn{7}{c}{Projective classes of curves and their
  geometric properties}\medskip \\
\hline
 Projective &    &    &  &  & \!{}$\#$ dist.\!  & $\bar\pj(\tl A)\in \SL(2,\R)$\\
type of $C$  &\!{}$W(\tilde C)$\!   &  $W(C)$  & $W(U)$ &  $\mathrm{Aut}_0(C)$   & points /   & $\tl A$ positive generator\\
&&&&& affine &of the holonomy\\
 \hline\hline
$\tl \Para^-_0$     &   $1$ &   $\tfrac12$  & $\tfrac12$ &  $\R/\Z$  & $\nb^0$    &    $P^-=\begin{pmatrix}1& - 1\\0&1 \end{pmatrix}$  \\
\hline
$ \tl\Para^+_n, \ n\in\N^*$     &   $\infty$ &   $n$  & $n-\tfrac12,n^*$ &  $\R$  &  $n$   &$P^+=\begin{pmatrix}1&1\\0&1\end{pmatrix}$  \\
  \hline
  $ \tl\Para^-_n, \ n\in\N^*$     &   $\infty$ &   $n$  & $n^*,n+\tfrac12$ &  $\R$  &  $n$   &$P^-=\begin{pmatrix}1&- 1\\0&1\end{pmatrix}$  \\
\hline
$ \tl\Hyp_0(\lambda)$     &   $\tfrac12$ &   $\tfrac12$  &  $\tfrac12$ & $\R/\Z$  &   $\nb^+$, $\nb^-$  &$\tl H(\l)=\begin{pmatrix}\l&0\\0&\l^{-1}\end{pmatrix}$   \\
\hline
$ \tl\Hyp_n(\lambda), {n\in\N^*}\!\! $     &   $\infty$ &   $n$  &\!{ $n-\tfrac12,n^*,n+\tfrac12$}\! &  $\R$  &   $2n$  &$H(\l)=\begin{pmatrix}\l&0\\0&\l^{-1}\end{pmatrix}$   \\
\hline
$ \tl\Ell_{\alpha}$,
  $\a\not\in \pi\N$     &
                          $\infty$ &
                                     $[\tfrac\a\pi]+\tfrac12$  & $[\tfrac\a\pi]+\tfrac12$  &
 $\R/\Z$  & &\!$E_\a\!{}=\!\begin{pmatrix}{ \cos\a}&-\sin\a\\ \sin\a&\cos\a\end{pmatrix}$\!   \\
\hline
 $\tl\Ell_{n\pi}, \ n\in\N^*$     &   $\infty$ &   $n$  & $n$  &
\!{\small $\tsl/\langle\tl E_{n\pi}\rangle$}\!  &     &    $\id=\begin{pmatrix}1&0\\0&1\end{pmatrix}$  \\
\hline
\end{tabular}
\end{adjustbox}
\end{center}
Here the asterisk denotes the winding number of the (countably many) particular choices of connected fundamental domains $U$ of $\tl C\subset \tp$ starting (and ending) at some preimage through $\pj$ of one of the resonance points of $C$, as it follows from Proposition \ref{windnumbers}.

\section{The Yamabe problem for curves in conformal geometry} For a compact conformal manifold $(M,c)$ of dimension at least 2, the Yamabe problem asks for a metric in the conformal class that has constant scalar curvature. Notably, the solution to this problem involves the {\em Yamabe operator}, which is precisely the canonical Laplace structure $\LL$ induced on the manifold itself by its conformal structure \cite[Proposition 3.7]{mlc}. This Yamabe operator, or {\em conformal Laplacian}, is a second-order linear differential operator on some (rank 1) weight bundle whose symbol is the contraction with the conformal structure, there is no first order term, and the zero order term, written in a gauge defined by a metric $g$ compatible with $c$, is, up to a constant factor, exactly the scalar curvature $\Scal_g$.

One can reformulate the Yamabe problem (which does admit solutions) as a search for a metric gauge in which the zero order term of the operator $\LL$ is constant.

In view of the above, D. Calderbank and F. Burstall formulate in \cite{cal-bur} the {\em Yamabe problem for curves} in a conformal/Möbius ambient space:

{\em Let $C$ be a curve in a conformal/Möbius manifold $(M,c)$. Is it possible to find a metric in the conformal class of the ambient space such that the zero order term of the induced Laplace structure on $C$, seen in an arc-length parametrization, is constant?}

As we announced in the introduction, the answer is {\em No!}
\begin{prop}\label{yam} There exist curves $C$ in ambient conformal/Möbius spaces $(M,c)$ for which the Yamabe problem has no solution.\end{prop}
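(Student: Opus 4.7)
The plan is to reduce the Yamabe problem to the classification of Laplace structures on $S^1$ from Theorem \ref{clasif}, and then to exhibit a curve whose associated conjugacy class lies in a non-homogeneous stratum of $\tsl_+$.

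First, I would reformulate the question in terms of parametrizations. A conformal rescaling $g\mapsto e^{2f}g$ of a metric $g\in c$ changes the $g$-arc-length parametrization $\gamma_g$ of $C$ by the reparametrization $\varphi$ with $\varphi'(s)=e^{f(\gamma_g(s))}$. Since any positive smooth function on $C$ extends to some $e^f$ on $M$ (via a tubular neighborhood and a cutoff), the arc-length parametrizations of $C$, as $g$ varies over the conformal class, exhaust all smooth orientation-preserving parametrizations of $C$ with arbitrary period. The Yamabe problem for $C$ thus admits a solution if and only if the canonical Laplace structure $\mathcal{L}^c$ of \eqref{lapcf} admits some parametrization $\gamma$ in which the zero-order term $F^\gamma$ from \eqref{eqh} is constant.

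Second, I would characterize when such a parametrization exists. If $F^\gamma\equiv c_0$, the translations $t\mapsto t+s$ of the parameter are projective automorphisms, so $(C,\mathcal{L}^c)$ is a homogeneous projective curve in the sense of Proposition \ref{autom}; conversely, on any homogeneous projective curve, a parametrization invariant under the connected $1$-parameter automorphism group has constant $F^\gamma$ by equivariance. A direct verification via Theorem \ref{clasif} with constant $F\equiv c_0$ gives $\tl R(1)=\exp(Q)$ with $Q=\begin{pmatrix}0&-c_0\\ 1&0\end{pmatrix}$, whose lift to $\tsl$ is the canonical lift of an elliptic ($c_0>0$), parabolic ($c_0=0$), or hyperbolic ($c_0<0$) matrix, and hence lies in the homogeneous classes $\tl\Ell\cup\tl\Para_0^{-}\cup\tl\Hyp_0(\lambda)$ identified in Proposition \ref{autom} and in the table at the end of Section~5. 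Consequently, the Yamabe problem on $C$ is solvable if and only if the holonomy conjugacy class of $\mathcal{L}^c$ in $\tsl_+$ is homogeneous.

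Finally, I would exhibit a curve whose class is non-homogeneous. My approach is direct realization: prescribe a smooth $1$-periodic function $F:\R\to\R$ whose monodromy $\tl R(1)$ from Theorem \ref{clasif} falls in a non-homogeneous component such as $\tl\Hyp_n(\lambda)$ with $n\ge 1$, and then construct a $2$-dimensional Möbius manifold $(M,c)$ containing a closed curve $C$ so that the Schouten-Weyl contraction $h^\nb(\dot\c,\dot\c)/c(\dot\c,\dot\c)$ along $C$, for an adapted Weyl structure $\nb$, equals $2F$ in the chosen parametrization; by Proposition \ref{lapl-ind} this yields $\mathcal{L}^c$ with zero-order term $F$. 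A convenient model is a thin tubular neighborhood of $C$ on which the gauge freedom \eqref{shout} affords enough flexibility to engineer the desired Schouten-Weyl profile. The main obstacle is precisely this realization step---verifying that the map from embedded closed curves in ambient Möbius/conformal manifolds to conjugacy classes in $\tsl_+$ has image meeting the non-homogeneous strata. An alternative route relies on the continuity of $\Xi$ in Theorem \ref{clasif}: deforming a reference curve whose class lies in the homogeneous locus, one crosses a parabolic boundary class into a non-homogeneous component such as $\tl\Hyp_1(\lambda)$ or $\tl\Para_1^\pm$, producing the desired counterexample to the Yamabe problem.
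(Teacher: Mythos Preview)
Your strategy coincides with the paper's: show that a constant zero-order term forces the projective structure to be homogeneous, and then realize a non-homogeneous Laplace structure on a closed curve inside some conformal/M\"obius ambient. Your first two steps are fine and in fact prove a bit more than needed (the paper only uses the implication ``$F^\gamma$ constant $\Rightarrow$ homogeneous'', which follows immediately from the translation-invariance of the arc-length gauge together with \eqref{0ord}; your explicit computation of $\tl R(1)$ for constant $F$ is a correct but unnecessary sharpening).

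The genuine gap is the realization step, which you yourself flag as ``the main obstacle'' and then leave unresolved. Neither of your proposed routes closes it: the deformation argument still presupposes that the image of the map \{embedded closed curves\} $\to$ \{conjugacy classes in $\tsl_+$\} is open (or at least meets the non-homogeneous strata), which is exactly what you need to prove; and the direct tubular-neighborhood construction is only sketched. The paper dispatches this step by quoting \cite[Theorem 4.22]{mlc}, which states precisely that \emph{every} Laplace structure on a curve $C$ arises as the induced Laplace structure for some embedding of $C$ as the zero section of a vector bundle with a suitable metric (and M\"obius structure when $\dim M=2$). Your ``direct realization'' sketch is in fact the content of that theorem; once you invoke it, the existence of non-homogeneous projective classes on $S^1$ (e.g.\ $\tl\Hyp_n(\lambda)$ or $\tl\Para_n^\pm$ with $n\ge 1$, from Proposition~\ref{autom}) immediately finishes the proof.
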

\begin{proof} 
In our notation from \eqref{eqh}, $F^\c$ should be constant for $\c$ an arc-length parametrization of $C$. If this is possible, then the projective structure of $C$ has to be homogeneous (the parametrization can be composed with translations, still remain arc-length, and the resulting zero order term is unchanged, see Proposition \ref{la-hi}). 

But we know that there exist (a lot of) projective structures on a closed curve $C$ that are not homogeneous. On the other hand, \cite[Theorem 4.22] {mlc} states, for the particular case of a curve:
\begin{prop} Let $(C,g)$ be a Riemannian curve and $\mathcal{L}$ a Laplace structure on $C$. Let $\nu$ be a vector bundle over $C$. Then there exists a metric $g^\nu$ on the total space $M$ of $\nu$ (and additionally a Möbius structure on $(M,[g])$ if $\dim M=2$) such that $\mathcal{L}$ is the induced Laplace structure on $C$ induced by the ambient space $M$.
\end{prop}
That means, every Laplace structure (hence, every projective structure, including non-homogeneous ones) on $C$ can be realized as one induced by the embedding of $C$ in a conformal/Möbius manifold $M$.

Such a non-homogeneous Laplace structure can not induce, in any parametrization, a Laplace structure with constant zero order term.
\end{proof}



\begin{thebibliography}{22}
 
\bibitem{baea} T. Bailey, M. Eastwood, 
{\it Conformal circles and parametrizations of curves in conformal manifolds},
Proc.\ Amer.\ Math.\ Soc.\ {\bf 108} (1990), no.\ 1, 215--221.

\bibitem{mlc} F. Belgun, {\it Geodesics and Submanifold Structures in Conformal Geometry}, J. Geom. Phys. {\bf 91} (2015), 172--191.

\bibitem{cal-bur} F. Burstall, D. Calderbank,
{\it Conformal submanifold geometry I-III,}
arXiv:1006.5700.

\bibitem{cald} D. Calderbank,
{\it M\"obius structures and two-dimensional Einstein--Weyl geometry,}
J.\ reine angew.\ Math.\ {\bf 504} (1998), 37--53.

\bibitem{ga} P. Gauduchon, {\it Connexion canonique et structures de
    Weyl en g\'eom\'etrie conforme,} (1990), unpublished.

\bibitem{pg} P. Gauduchon, {\it Structures de Weyl-Einstein,
    espaces de twisteurs et vari\'et\'es de type $S^1\times S^3$},
  J.\ reine angew.\ Math.\  {\bf 469}  (1995), 1--50.  

\bibitem{gold} W.M. Goldman, {\it Geometric structures on manifolds}, Grad. Stud. Math. {\bf 227}, Am. Math. Soc. Providence, RI (2022).

\bibitem{guh} P. Guha, {\em Diffeomorphisms on $S^1$, projective structures and integrable systems}, ANZIAM J. {\bf 44} (2002), 169--180. 
  
\bibitem{hit} N.J. Hitchin, {\it Vector fields on the circle}, in {\it Mechanics, analysis and geometry: 200 years after Lagrange} (ed. M. Francaviglia), North-Holland Delta Ser. (1991), 359--378.

\bibitem{kob} S. Kobayashi, {\it Transformations grops in Riemannian geometry}, Springer, 1972.
  
\bibitem{ku} N. Kuiper,
{\it Locally projective spaces of dimension one,}
Michigan Math.\ J.\ {\bf 2} (1954), 95--97.

\bibitem{laz} V.F. Lazutkin, M.F. Pankratova, {\em Normal forms and versal deformations for Hill's equation}, Functional Anal. Appl. {\bf 9} (1975), no. 4, 306-–311 (1976)

\bibitem{mw} W. Magnus, S. Winkler, {\it Hill's Equation}, Interscience Tracts in Pure and Applied Mathematics, {\bf 20}, Interscience Publishers John Wiley \& Sons, New York-London-Sydney, 1966.

\bibitem{taf} C. T\'afula, {\it Classification of the conjugacy classes of $\tsl$}, Expo. Math.  {\bf  42} (2024), no. 6, Paper No. 125626, 13 pp.
  

\end{thebibliography}
\end{document}